\newtheorem{theorem}{Theorem}[section]
\newtheorem{lemma}[theorem]{Lemma}
\newtheorem{proposition}[theorem]{Proposition}
\newtheorem{definition}[theorem]{Definition}
\theoremstyle{example}
\newtheorem{remark}[theorem]{Remark}
\numberwithin{equation}{section}
\newcommand{\beq}{\begin{equation}}
\newcommand{\eeq}{\end{equation}}
\newcommand{\ZZ}{\mathbb{Z}}
\newcommand{\RR}{\mathbb{R}}
\newcommand{\CC}{\mathbb{C}}
\DeclareMathOperator{\ext}{ext}
\DeclareMathOperator{\rest}{rest}
\DeclareMathOperator{\Fred}{Fred}
\DeclareMathOperator{\im}{im}
\DeclareMathOperator{\id}{id}
\newcommand{\Z}{\mathbb{Z}}
\newcommand{\Ch}{\mathrm{Ch}}
\newcommand{\cM}{\mathcal{M}}
\newcommand{\cO}{\mathcal{O}}
\newcommand{\cK}{\mathcal{K}}
\newcommand{\FF}{\mathcal{F}}
\newcommand{\cA}{\mathcal A}
\newcommand{\Aut}{\rm Aut}
\newcommand {\be}{\begin{equation}}
\newcommand {\ee}{\end{equation}}
\newcommand{\e}{\end{eqnarray*}}
\begin{document}


\title
{On the Chern character in Higher Twisted K-theory\\ and spherical T-duality}

\author{Lachlan Macdonald}
\address{School of Mathematical Sciences,
University of Adelaide, Adelaide 5005, Australia}
\email{lachlan.macdonald@adelaide.edu.au}

 \author{Varghese Mathai}
\address{School of Mathematical Sciences,
University of Adelaide, Adelaide 5005, Australia}
\email{mathai.varghese@adelaide.edu.au}

\author{Hemanth Saratchandran}
\address{School of Mathematical Sciences,
University of Adelaide, Adelaide 5005, Australia}
\email{hemanth.saratchandran@adelaide.edu.au}

\subjclass[2010]{Primary 81T30 Secondary 19L50, 58B15, 19L10}
\keywords{D-branes, higher twisted K-theory, higher twisted cohomology, Chern character, spherical T-duality, Cuntz algebras, cohomotopy}
\date{}

\begin{abstract}
In this paper, we construct for higher twists that arise from cohomotopy classes, the Chern character in higher twisted K-theory,
that maps into higher twisted cohomology. We show that it gives rise to an isomorphism between higher twisted K-theory 
and higher twisted cohomology over the reals.
Finally we compute spherical T-duality in higher twisted K-theory and higher twisted cohomology in very general cases.
\end{abstract}

\maketitle
\tableofcontents

\section*{Introduction}

One of the goals in this paper is to give an explicit formula for the Chern character in higher twisted K-theory.  Such a formula may be used in string theory to compute the charge of a D-brane $x\in K^{0}_{H}(M)$ for a spacetime $M$ in the presence of a flux $H$, according to the equation $$charge(x)= \sqrt{\widehat A(M)} \cup Ch_H(x) \in H^{even}_H(M),$$ cf. \cite{Min-Moo}.  While it is well known to experts that the Chern character abstractly exists, as may be seen using methods of rational homotopy theory cf. \cite{AS2,FHT,Teleman}, it turns out to be rather tricky to give an explicit formula for the Chern character in the general case.  In the end we show that for every odd degree cohomotopy class of a manifold $M$ (which defines an odd degree cohomology class $H$ of $M$) we can define an $H$-twisted K-theory and produce a formula for the Chern character in this case. As explained below, higher twisted K-theory and higher twisted cohomology arise in String/M-theory, and the Chern character constructed relates these. We begin with a brief historical introduction.

$D$-branes, studied in \cite{Pol1, Pol2} are extended objects upon which open strings can end with Dirichlet boundary conditions and are fundamental to string theory. 
It was argued in \cite{Min-Moo, W98} that charges of $D$-branes take values in $K$-theory. In \cite{BM00}, twisted K-theory was proposed as a classification for D-branes in a background flux. The proposal was consolidated in \cite{BCMMS,MS}, where geometric representatives for twisted K-theory were defined, and so was the twisted Chern character. Later in \cite{AS1} an alternate description of twisted K-theory was presented, and an alternate construction of the twisted Chern character presented in \cite{AS2}.

For an account of T-duality in the absence of a background flux, using K-theory, see \cite{Hori}.
In \cite{BEM04a, BEM04b}, T-duality in the presence of a background flux was studied for spacetimes compactified on a principal 
circle bundle $Z$. It was established that there is a change in topology whenever either the principal circle bundle is nontrivial or 
the background flux is topologically nontrivial.  It was also first established that T-duality gives an isomorphism of twisted K-theory groups, 
\begin{align*}
T: K^{*}_{H}(Z) &\to K^{*+1}_{\hat{H}}(\hat{Z}),
\end{align*}
where $\hat{Z}$ is the T-dual principal circle bundle.
In \cite{BS, West, MR} there are alternate approaches to proving this; via a T-duality classifying space and via noncommutative geometry.

In \cite{BEM14} the authors first used higher twisted K-theory and higher twisted cohomology in String Theory. 
The authors argue that the 7-twisted K-theory/cohomology  classify certain conserved charges in 
type IIB supergravity.  They then conclude that spherical T-duality, which is duality of principal $SU(2)$-bundles 
with degree 7 flux, provides a one to one map between conserved charges in certain 
topologically distinct compactifications and also a novel electromagnetic duality on the fluxes.
They also suggest that spherical T-duality preserves the spectra of certain spherical 3-branes that wrap 
$S^3$ cycles in some spacetime $X$, i.e.\ by replacing closed strings, described by $\text{Maps}(S^1,X)$, by 
spherical 3-branes described by $\text{Maps}(S^3,X)$. Spherical T-duality in case of non-principal 
$SU(2)$-bundles is studied in \cite{BEM142}. 
There are several models for higher twisted K-theory, cf. \cite{Guerra, Madsen, Segal, Teleman}.
The model of higher twisted K-theory in \cite{Pennig} using 
the topological K-theory of certain operator algebra bundles will be used here.

Here we consider oriented $S^{2k-1}$-bundles $Z$ over $2k$ dimensional closed oriented manifolds $M$. 
Let $H$ be a top degree twist on $Z$, $H \in H^{4k-1}(Z, \ZZ)\cong \ZZ$. Assuming that $M$ is a torsion-free manifold, then we can compute the 
cohomology and K-theory of $Z$ using the Gysin sequence. We can also compute the $H$-twisted K-theory and $H$-twisted cohomology of 
$Z$. Suppose that the Euler class of $Z$ is $e\in H^{2k}(M, \ZZ) \cong \ZZ$. 
Note that the pushforward map $\pi_*: H^{4k-1}(Z, \ZZ) \to H^{2k}(M, \ZZ) $ is an isomorphism, where $\pi:Z\to M$
is the projection map.
Define a spherical T-dual of $(Z, H)$ to be a pair 
$(\hat Z, \hat H)$ where the Euler class of $\hat Z$ is equal to $\pi_*(H)$. Let $\hat\pi:\hat Z \to M$ be the projection
where again the pushforward map $\hat\pi_*: H^{4k-1}(\hat Z, \ZZ) \to H^{2k}(M, \ZZ) $ is an isomorphism.
Define $\hat\pi_*(\hat H)$ to be equal to the Euler class $e$. Then we show that there are degree shifting isomorphisms $K^\bullet_H(Z) \cong  K^{\bullet+1}_{ \hat H}(\hat Z)$ and $H^\bullet_H(Z, \ZZ) \cong  H^{\bullet+1}_{\hat H}(\hat Z, \ZZ)$.
Furthermore, in the case that $M$ is not torsion free, our Chern character formula allows one to obtain a rational degree shifting isomorphism 
$K^\bullet_H(Z) \otimes \mathbb{Q} \cong  
K^{\bullet+1}_{ \hat H}(\hat Z) \otimes \mathbb{Q}$ of the higher twisted 
K-theories.

In \cite{LSW}, they instead consider iterated higher algebraic K-theory and its higher twists. They also define
an abstract Chern character which takes values in the cohomological counterpart of  twisted iterated higher algebraic K-theory,
and prove an abstract T-duality for certain spherical bundles with flux. It is hard to 
relate those results to the results of our paper except to say that they have a similar flavour. 

Higher twisted K-theory arises because K-theory admits natural automorphisms given by tensoring with virtual line-bundles,
that is virtual vector bundles of rank 1. The main construction is that of the Chern character in higher twisted K-theory for twists arising from cohomotopy classes. 
We generalise and clarify some constructions in \cite{AS2}.
The relevant Atiyah-Hirzebruch spectral sequence in higher twisted K-theory is constructed, and, after  taking a detour through singular de Rham theory to establish some essential facts, we also construct the relevant spectral sequence for higher twisted cohomology. We conclude that for for any higher twist arising from the cohomotopy set of the space, the Chern character can be constructed and is a real isomorphism between higher twisted K-theory and higher twisted cohomology.  The Atiyah-Hirzebruch spectral sequence we give is instrumental to the spherical T-duality computations.\\

\noindent {\bf Acknowledgements}\\
HS and VM were partially supported by funding from the Australian Research Council, through the Australian Laureate Fellowship FL170100020.
LM and VM were partially supported by funding from the Australian Research Council, through the Discovery Project grant DP200100729.
HS wishes to acknowledge Jarah Evslin for useful discussions about spectral sequences and their differentials. \\
All three authors would like to thank the two anonymous referees for their comments on the paper, which have helped to improve the exposition.

\section{Preliminaries}\label{prelims}

To make the paper more self contained, we add some preliminaries on operator algebras 
that are used in the paper. The main references to this section are the books \cite{WO, Black} and the first two sections of the thesis \cite{Brook}.

We begin with the definition of the Cuntz algebra $\cO_\infty$. It is the universal $C^*$-algebra generated by infinitly many generators $\{S_1, S_2, \ldots\}$ subject to 
the relations $S_i^*S_i=1$ for all $i$, and $\sum_{j=1}^N S_j S_j^* \le 1$ for all $N\in \mathbb N$. It is a {\em simple} $C^*$-algebra with many important algebraic properties that have been thoroughly studied
in the literature.

We consider the short exact sequence of $C^*$-algebras,
$$
0\to \cO_\infty\otimes \cK \to \cM(\cO_\infty\otimes \cK ) \stackrel{\pi}{\to}  \cM(\cO_\infty\otimes \cK )/\cO_\infty\otimes \cK \to 0
$$
where $\cK$ denotes the algebra of compact operators on an infinite dimensional Hilbert space, 
$\cM$ denotes the multiplier algebra . An operator $F\in  \cM(\cO_\infty\otimes \cK ) $ is said to be $\cO_\infty\otimes \cK$-Fredholm if 
$\pi(F) \in GL( \cM(\cO_\infty\otimes \cK )/\cO_\infty\otimes \cK)$, the group of invertible elements in $ \cM(\cO_\infty\otimes \cK )/\cO_\infty\otimes \cK$.
Then the space of all $\cO_\infty\otimes \cK$-Fredholm operators
$$
Fred_{\cO_\infty\otimes \cK } = \pi^{-1} (GL( \cM(\cO_\infty\otimes \cK )/\cO_\infty\otimes \cK))
$$
is an open subset of the $C^*$-algebra $\cM(\cO_\infty\otimes \cK )$, so it is a smooth Banach manifold.

From the short exact sequence above, one can deduce that 
$$
K_0 ( \cO_\infty\otimes \cK) \cong \pi_0(Fred_{\cO_\infty\otimes \cK })\cong \ZZ,
$$
since Cuntz \cite{Cuntz81} has shown that $ \cO_\infty\otimes \cK$ is KK-equivalent to $\CC$. In fact, we deduce that 
$$
\pi_j(Fred_{\cO_\infty\otimes \cK })\cong \ZZ\quad\text{if $j$ is even},\quad
\cong 0 \quad \text{if $j$ is odd}$$

Pennig's model \cite{Pennig} of the higher twisted K-theory of a space $X$ uses principal $Aut(\cO_{\infty}\otimes\cK)$-bundles over $X$, which are classified by maps $\lambda:X\rightarrow BAut(\cO_{\infty}\otimes\cK)$.  By the associated bundle construction, any such principal bundle $P_{\lambda}$ is associated to an algebra bundle $\cA_{\lambda}$ over $X$ with fibre $\cO_\infty\otimes \cK$.
Then the higher twisted K-theory of $X$ is by definition
\beq
K^j_{\lambda}(X) = K_j(C_0(X, \cA_{\lambda}))
\eeq
where $C_0(X, \cA_{\lambda})$ denotes the $C^*$-algebra of sections of $\cA_{\lambda}$ that vanish at infinity. The associated bundle construction may also be used to yield bundles $\Omega^{j}Fred_{\lambda}$ over $X$ with typical fibre $\Omega^{j}Fred_{\cO_{\infty}\otimes\cK}$ (here $\Omega^{j}$ denotes the iterated based loop space, while $Aut(\cO_{\infty}\otimes\cK)$ acts on $\Fred_{\cO_{\infty}\otimes\cK}$ by conjugation).  Using the short exact sequence above, one can easily show that higher twisted K-theory can be equivalently described as
\beq
K^j_{\lambda}(X) = \pi_0(C_1(X, \Omega^j Fred_{\lambda}))
\eeq
where $C_1(M,\Omega^j  Fred_{\lambda})$ denotes the sections of the bundle $\Omega^j Fred_{\lambda}$ that are 
equal to the identity outside of a compact set.  If $A$ is a closed subset of $X$, then the higher twisted relative K-theory is defined by
\[
K^{j}_{\lambda}(X,A):=K^{j}_{\lambda}(X\setminus A).
\]
There is a long exact sequence associated to relative higher twisted K-theory, which will be discussed later.

An important property of higher twisted K-theory, is that of Bott periodicity, 
$$
K^j_{\lambda}(X) \cong K^{j+2}_{\lambda}(X).
$$
This says that it is enough to study $K^0_{\lambda}(X)$ and $K^1_{\lambda}(X)$.  We remark here that from here on, \emph{indices in $\ZZ_{2}$ will always be shown in bold font}.  Thus the $\ZZ_{2}$-graded $K$-theory groups will be denoted $K^{\mathbf{0}}_{\lambda}(X)$ and $K^{\mathbf{1}}_{\lambda}(X)$.

One final remark on higher twisted K-theory is that twists on a space $X$ in the general sense outlined above \emph{need not come from cohomology classes} for $X$.  This is to be contrasted to ordinary twisted K-theory, in which twists always arise from integral cohomology classes.  In the special cases where the twist \emph{does} correspond to a cohomology class $[H]\in H^{*}(X;\ZZ)$, the higher twisted K-theory will be denoted
\[
K^{j}_{[H]}(X)
\]
instead of $K^{j}_{\lambda}(X)$.  Examples of this include when $X$ is a finite, connected, torsion-free CW complex \cite[Corollary 4.4]{DP} (and in particular whenever $X$ is a compact, torsion-free, connected manifold).  
We recall from the work of M. Dadarlat and U. Pennig in \cite{DP} that the homotopy groups for 
$BAut(\mathcal{O}_{\infty}\otimes \mathcal{K})$ were computed to be
$\ZZ_2$ in degree $1$, 
$\ZZ$ in all other odd degrees and $0$ otherwise.
An important class of examples for us are those for which the twist $\lambda:X\rightarrow BAut(\cO_{\infty}\otimes\cK)$ factors through an odd sphere, so that $\lambda = f\circ H$, where $f:S^{2k+1}\rightarrow BAut(\cO_{\infty}\otimes\cK)$ is the generator of
the $(2k+1)$ homotopy group, and where $H:X\rightarrow S^{2k+1}$ defines a class in the cohomotopy set of $X$, hence a class $[H]\in H^{2k+1}(X;\ZZ)$.  Note that by the arguments of \cite{AS1}, such cohomotopy twists generalise the ordinary twists already familiar from string theory.

Higher twisted cohomology was studied in detail in \cite{MW}, and will also be discussed later in the paper.

\section{Higher Twisted K-theory and the Chern Character}\label{htk_ch}

The goal of this section is to define a Chern character for higher twisted K-theory, when the twist is given by a cohomotopy class. 
We start by using work of M. Dadarlat and U. Pennig in \cite{DP} to construct a $Fred_{\mathcal{O}_{\infty}\otimes \mathcal{K}}$-bundle over odd dimensional spheres $S^{2k+1}$, which we denote by $Y_{2k+1}$ for $k \geq 1$. Generalising the method of Atiyah and Segal in \cite{AS1}, we then show how to construct a Chern character for the bundle $Y_{2k+1}$. This Chern character takes values in a higher twisted cohomology group, and has the property that it lifts the classical Chern character of $Fred_{\mathcal{O}_{\infty}\otimes \mathcal{K}}$. 
Finally, using the constructed Chern character for the bundle $Y_{2k+1}$, we are able to construct a Chern character for a compact smooth manifold $X$, with a fixed cohomotopy class 
$[\lambda] \in [X, S^{2k+1}] $ as twist. Analogous to the case of $Y_{2k+1}$, the Chern character for $X$ lands in a twisted Cohomology group, determined by the cohomotopy twist 
$[\lambda]$.
Our strategy can be seen as a generalisation of the methods employed by Atiyah and Segal in \cite{AS1}, where a Chern character for 3-twisted K-theory is constructed by first constructing one over a universal $Fred$-bundle over $S^3$.

\subsection{The universal $Fred_{\cO_\infty\otimes \cK }$-bundle over $S^{2k+1}$}\label{univ_fib}

In this subsection, we will be dealing with 
$Fred_{\mathcal{O}_{\infty}\otimes \mathcal{K}}$-bundles over compact smooth manifolds. Such bundles are classified by maps into the classifying space
$BAut(Fred_{\mathcal{O}_{\infty}\otimes \mathcal{K}})$, and can be given as the pull-back of the universal 
$Fred_{\mathcal{O}_{\infty}\otimes \mathcal{K}}$-bundle over 
$BAut(Fred_{\mathcal{O}_{\infty}\otimes \mathcal{K}})$, which we will denote by 
$Fred(P)$. The reader is invited to consult section \ref{prelims} for preliminaries about such bundles, and the references there in for a more comprehensive discussion.

The cohomology generator $u_{2k+1} \in H^{2k+1}(S^{2k+1};\ZZ)$ gives rise to a
$Fred_{\mathcal{O}_{\infty}\otimes \mathcal{K}}$-bundle, denoted $Y_{2k+1}$, via corollary 4.7 (ii) in \cite{DP}, and the
associated bundle construction, see section \ref{prelims}. In fact, from the referenced corollary, we see that every cohomology class in 
$H^{2k+1}(S^{2k+1};\ZZ)$ gives rise to a 
$Fred_{\mathcal{O}_{\infty}\otimes \mathcal{K}}$-bundle, and the one corresponding to the generator $u_{2k+1}$ can be thought of as generating all these bundles. In this regard, we can think of $Y_{2k+1}$ as a universal 
$Fred_{\mathcal{O}_{\infty}\otimes \mathcal{K}}$-bundle over $S^{2k+1}$.

It will be important for us to understand the Serre
spectral sequence associated to the bundle 
$Y_{2k+1} \rightarrow S^{2k+1}$. 
As the base of the Serre spectral sequence is $S^{2k+1}$, we see that the 
only non-trivial differential is 
$d_{2k+1} : E_{2k+1}^{0, q} \rightarrow E_{2k+1}^{2k+1, q-2k}$, and the 
spectral sequence reduces to the Wang exact sequence.

\begin{tikzcd} 
\cdots
\arrow[r] & H^{q-1}(F) \arrow[r, "d"] & H^{q-2k}(F)  \arrow[r] &
 H^q(Y_{2k+1}) \arrow[r] & H^q(F) \arrow[r] & \cdots \\ 
\end{tikzcd}

where $d = d_{2k+1}$ is the differential of the Serre spectral sequence
and $F= Fred_{\mathcal{O}_{\infty}\otimes \mathcal{K}}$.

Any $Fred_{\mathcal{O}_{\infty}\otimes \mathcal{K}}$-bundle, over 
$S^{2k+1}$, can be constructed by taking two trivial bundles over each 
$D^{2k+1}$ hemisphere of $S^{2k+1}$, and gluing them together via a gluing map.
In other words, such a bundle is determined via its clutching map.
\begin{equation*}
\psi : S^{2k} \times Fred_{\mathcal{O}_{\infty}\otimes \mathcal{K}}
\rightarrow S^{2k} \times Fred_{\mathcal{O}_{\infty}\otimes \mathcal{K}}.
\end{equation*}

The following proposition gives the action of $\psi^*$ on cohomology.

\begin{proposition}\label{clutch_action_cohom}
Let $F= Fred_{\mathcal{O}_{\infty}\otimes \mathcal{K}}$. For an $x \in H^n(F)$ we have that 
$\psi^*(1 \otimes x) = 1\otimes x + w\otimes dx$ 
\end{proposition}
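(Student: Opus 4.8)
The plan is to reduce the computation of $\psi^*$ to the defining property of the Wang differential $d = d_{2k+1}$ of the Serre spectral sequence of the $F$-bundle $\pi\colon E\to S^{2k+1}$ determined by $\psi$. First I would pin down the algebraic shape of the answer. Since $H^*(S^{2k})$ is free on $1$ and the generator $w\in H^{2k}(S^{2k})$, the K\"unneth theorem identifies $H^*(S^{2k}\times F) = \bigl(1\otimes H^*(F)\bigr)\oplus\bigl(w\otimes H^*(F)\bigr)$, and because $\psi^*$ is additive and degree preserving it must have the form $\psi^*(1\otimes x) = 1\otimes\alpha(x) + w\otimes\beta(x)$ for additive maps $\alpha\colon H^n(F)\to H^n(F)$ and $\beta\colon H^n(F)\to H^{n-2k}(F)$. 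Moreover $\psi$, being a clutching map, covers the identity on $S^{2k}$, so $\psi^*(w\otimes 1) = w\otimes 1$ by functoriality; thus the content of the proposition is that $\alpha = \id$ and $\beta = d$.

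Next I would dispose of $\alpha$. After homotoping the clutching map---which does not change the bundle---we may assume $\psi$ restricts to the identity over a basepoint $p_0\in S^{2k}$. Restricting along the fibre inclusion $\iota\colon\{p_0\}\times F\hookrightarrow S^{2k}\times F$ annihilates $w$ and carries $1\otimes x$ to $x$; since $\psi\circ\iota = \iota$ we obtain $\alpha(x) = \iota^*\psi^*(1\otimes x) = \iota^*(1\otimes x) = x$, so $\alpha = \id$.

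The substance is the identification $\beta = d$. For this I would cover the total space as $E|_{U_+}\cup E|_{U_-}$ by the restrictions of $E$ over slightly thickened closed hemispheres $U_\pm\subset S^{2k+1}$: each $U_\pm$ is contractible, so $H^*(E|_{U_\pm})\cong H^*(F)$, while $U_+\cap U_-$ deformation retracts onto the equator and, in the two hemisphere trivialisations, the gluing over $U_+\cap U_-$ is homotopic to $\psi$. The associated Mayer--Vietoris sequence has difference map $(a,b)\mapsto 1\otimes a - \psi^*(1\otimes b) = 1\otimes(a-b) - w\otimes\beta(b)$, and under the K\"unneth identification $H^*(E|_{U_+\cap U_-})\cong H^*(F)\oplus H^{*-2k}(F)$ it disentangles into exactly the Wang exact sequence displayed above; equivalently, one uses the standard description of $d_{2k+1}(x)$ as the obstruction to extending a fibre class $x\in H^*(F)$ over the whole total space, which is read off from the discrepancy $\mathrm{pr}_F^*x - \psi^*(\mathrm{pr}_F^*x) = -w\otimes\beta(x)$ between its extensions over the two hemispheres. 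Either way $\beta$ agrees with $d_{2k+1}$ up to a sign depending on the orientation of $w$ and the normalisation of the differential; fixing these so that the sign is $+$ yields $\psi^*(1\otimes x) = 1\otimes x + w\otimes dx$. This is the cohomological analogue of the $K$-theory computation over $S^3$ of Atiyah--Segal in \cite{AS1}.

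I expect the main obstacle to be precisely this last step: verifying that the two hemisphere trivialisations glue by $\psi$ after the retraction $U_+\cap U_-\simeq S^{2k}$, that the Mayer--Vietoris sequence matches the Wang sequence term by term and differential by differential, and that all signs and orientations conspire to give $+dx$ rather than $-dx$. Everything else is formal: the shape of the answer is forced by K\"unneth and degree, and $\alpha = \id$ is a one-line fibre restriction.
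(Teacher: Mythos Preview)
Your proposal is correct and follows essentially the same approach as the paper: decompose $\psi^*(1\otimes x)$ via K\"unneth, identify the $1\otimes(\cdot)$ component as the identity by restricting to a fibre, and identify the $w\otimes(\cdot)$ component with the Wang differential $d_{2k+1}$. The only difference is one of detail: the paper simply \emph{asserts} the commutative square $\pi\circ\psi^*\circ i = d$ (the standard characterisation of the Wang differential via the clutching map) and reads off the answer, whereas you re-derive this identification by running the Mayer--Vietoris sequence for the two-hemisphere cover and matching it against the Wang sequence. Your argument is therefore more self-contained, at the cost of the extra bookkeeping (signs, orientations, matching of differentials) you flag at the end; the paper's version is shorter but leans on the reader already knowing this description of $d_{2k+1}$.
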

\begin{proof}
We have the following commutative diagram

\begin{tikzcd} 
H^0(S^{2k}) \otimes H^n(F) \oplus H^{2k}(S^{2k}) \otimes H^{n-2k}{F}
\arrow[r, "\psi^*"] & 
H^0(S^{2k}) \otimes H^n(F) \oplus H^{2k}(S^{2k}) \otimes H^{n-2k}{F} 
\arrow[d, "\pi"] \\
 H^{n}(F) \arrow[u, "i"]  \arrow[r,"d"] &
 H^{n-2k}(F)  \\ 
\end{tikzcd}

where we have used Kunneth's theorem to write $H^n(S^{2k} \times F) = 
H^0(S^{2k}) \otimes H^n(F) \oplus H^{2k}(S^{2k}) \otimes H^{n-2k}{F}$, 
$i(x) = 1\otimes x$ denotes the inclusion and $\pi$ the projection. 
 
On taking the trivial clutching function, we see that the first term of
$\psi^*(1 \otimes x)$ must be $1 \otimes x$. Together with the above commutative 
diagram, it follows that 
$\psi^*(1 \otimes x) = 1\otimes x + w\otimes dx$.
\end{proof}

Given a compact space $X$ it is well known that the Picard group
$Pic(X)$, formed by (equivalence classes of) line bundles on $X$, is a multiplicative
subgroup of $K(X)$. Hence, there exists an action
\begin{equation*}
Pic(X) \times K(X) \rightarrow K(X)
\end{equation*}
by group homomorphisms. 
On the level of vector bundles, this action is induced by tensoring with a line bundle: $(L, E) \mapsto L \otimes E$. 

Atiyah and Segal, see p. 5-7 in \cite{AS1}, were able to use this action to understand the differential $d_3$ in the Serre spectral sequence of a universal $Fred$-bundle over $S^3$. This provided a convenient strategy for them to define the Chern character for twisted $K$-theory twisted by a degree $3$ cohomology class. We will be taking a similar approach, and as the bundles we are working with are slightly more general. We will need to understand a more general action.

The above action is a special case of a more general action given by 
virtual line bundles, which we now describe.
Consider the rank function
\begin{equation*}
rk : K(X) \rightarrow \ZZ
\end{equation*}
that gives the rank of a virtual vector bundle in $K(X)$. 

Let $VPic(X) := rk^{-1}(1)$. An element of $VPic(X)$ is a virtual line bundle. We point out that 
while $Pic(X)$ is a multiplicative group, $VPic(X)$ is only a set. An element of
$VPic(X)$ does not necessarily have a multiplicative inverse.
We call $VPic(X)$ the \textit{virtual Picard set} on $X$. These facts give us an action
\begin{equation*}
VPic(X) \times K(X) \rightarrow K(X)
\end{equation*}
of virtual line bundles on the $K$-theory of $X$, induced via tensor product.

The definition of the virtual Picard set looks very similar to that of the 
Picard group. However, there is a key difference. It is well know that there 
exists an isomorphism
\begin{equation*}
c_1 : Pic(X) \rightarrow H^2(X;\ZZ)
\end{equation*}
induced via the first Chern class. This implies that $Pic(X)$ is a cohomological 
action on $K(X)$. In the case of $VPic(X)$, there is no such isomorphism. In 
other words, the action of $VPic(X)$ on $K(X)$ is not cohomological in any way, 
and is an honest action coming from the $K$-theory of $X$ as opposed to the 
singular cohomology of $X$. This action will be used shortly to understand the differential $d_{2k+1}$ in the Serre spectral sequence of the bundle $Y_{2k+1}$.

%

Since for compact spaces $X$, $K^0(X) \cong [X, Fred_{\mathcal{O}_{\infty}\otimes \mathcal{K}}]
 \cong [ X, Fred]$, we can use Yoneda's lemma to conclude that $Fred_{\mathcal{O}_{\infty}\otimes \mathcal{K}}$
 and $Fred$ are homotopy equivalent.
 Therefore cohomology ring of $Fred_{\mathcal{O}_{\infty}\otimes \mathcal{K}}$ is given by the following proposition.
\begin{proposition}\label{ring_structure_Fred}
We have an isomorphism of rings 
$H^*(Fred_{\mathcal{O}_{\infty}\otimes \mathcal{K}}; \ZZ) \cong 
\ZZ[s_1, s_2,\ldots, s_n,\ldots]$, where 
$s_i \in H^{2i}(Fred_{\mathcal{O}_{\infty}\otimes \mathcal{K}};\ZZ)$.
\end{proposition}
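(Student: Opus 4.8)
The plan is to reduce the statement to the classical computation of the integral cohomology ring of $BU$. First, recall from the discussion immediately above that $Fred_{\cO_{\infty}\otimes\cK}$ is homotopy equivalent to $Fred$, the space of Fredholm operators on a separable infinite dimensional Hilbert space, both representing the functor $X\mapsto K^{0}(X)$ on compact spaces and hence, by Yoneda, on all spaces with the homotopy type of a CW complex. Applying exactly the same representability argument with $\ZZ\times BU$, which also represents $K^{0}(-)$, one obtains a homotopy equivalence $Fred_{\cO_{\infty}\otimes\cK}\simeq\ZZ\times BU$ (consistent with the homotopy groups $\pi_{j}(Fred_{\cO_{\infty}\otimes\cK})\cong\ZZ$ for $j$ even and $0$ for $j$ odd computed in Section \ref{prelims}). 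I would then restrict to the basepoint component $BU$, which is all that is used in the sequel and with respect to which the displayed isomorphism is to be understood.

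It then remains to invoke the standard fact that $H^{*}(BU;\ZZ)\cong\ZZ[c_{1},c_{2},\dots]$ as graded rings, with $c_{i}\in H^{2i}(BU;\ZZ)$ the universal Chern classes. This is proved by writing $BU=\varinjlim_{n}BU(n)$, using the splitting principle (or, equivalently, the projective bundle formula applied iteratively, together with the injection $H^{*}(BU(n);\ZZ)\hookrightarrow H^{*}(BT^{n};\ZZ)=\ZZ[x_{1},\dots,x_{n}]$ onto the symmetric polynomials, $T^{n}\subset U(n)$ a maximal torus) to identify $H^{*}(BU(n);\ZZ)\cong\ZZ[c_{1},\dots,c_{n}]$, and then passing to the inverse limit over $n$; there is no $\lim^{1}$ obstruction since the cohomology is finitely generated and free in each degree with surjective restriction maps $H^{*}(BU(n+1))\to H^{*}(BU(n))$. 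Transporting the $c_{i}$ along the homotopy equivalence of the previous paragraph and setting $s_{i}$ to be the resulting classes gives the asserted ring isomorphism $H^{*}(Fred_{\cO_{\infty}\otimes\cK};\ZZ)\cong\ZZ[s_{1},s_{2},\dots]$.

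I expect the only genuinely delicate point to be the identification $Fred_{\cO_{\infty}\otimes\cK}\simeq\ZZ\times BU$, i.e.\ controlling the homotopy type and not merely the homotopy groups, together with the bookkeeping for $\pi_{0}$: the full space has cohomology ring $\prod_{n\in\ZZ}\ZZ[c_{1},c_{2},\dots]$, so the polynomial-ring description in the statement is the one holding on a single component. Everything else --- the representability argument already carried out above, the theorem of Atiyah and J\"{a}nich, and the structure of $H^{*}(BU;\ZZ)$ --- is classical and may simply be cited. I would also take care to let the $s_{i}$ be \emph{integral} Chern classes rather than the Chern character components $\ch_{i}$, the latter being only rational; the Chern character enters later, over $\RR$, where one likewise has $H^{*}(BU;\RR)\cong\RR[\ch_{1},\ch_{2},\dots]$.
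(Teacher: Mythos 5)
Your proposal is correct and takes essentially the same route the paper intends: the paper gives no proof beyond the preceding remark that Yoneda's lemma yields $Fred_{\cO_{\infty}\otimes\cK}\simeq Fred\simeq\ZZ\times BU$, after which the statement is the classical computation $H^{*}(BU;\ZZ)\cong\ZZ[c_{1},c_{2},\dots]$, which you supply. Your two caveats are both warranted: the isomorphism holds on a single component (consistent with the paper's later restriction to $Fred^{j}_{\cO_{\infty}\otimes\cK}$), and the free polynomial generators must be taken to be the Chern classes rather than the power sums $s_{n}$ that the paper later uses in the same notation, since the power sums do not freely generate the cohomology over $\ZZ$ (only over $\QQ$, which suffices for the real/rational arguments where they are actually used).
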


The element $s_0 \in H^0(Fred_{\mathcal{O}_{\infty}\otimes \mathcal{K}}; \ZZ)$ corresponds to the index. Letting 
$Fred^k_{\mathcal{O}_{\infty}\otimes \mathcal{K}}$ denote the component of 
$Fred_{\mathcal{O}_{\infty}\otimes \mathcal{K}}$ consisting of index $k$ operators, we have that $s_0 = k \in 
H^0(Fred^k_{\mathcal{O}_{\infty}\otimes \mathcal{K}}; \ZZ)$. In particular, in the index zero component $s_0 = 0$.

We will also need the following theorem of Bott, see corollary 9.8 in 
\cite{Husemoller}.
\begin{theorem}[Bott]\label{Bott's_thm}
Let $u \in H^{2n}(S^{2n};\ZZ)$ be a generator. Given any rank $n$ complex vector bundle $E$ over $S^{2n}$ we have  
$c_n(E) = m(n-1)!u$ for some $m \in \ZZ$, where $c_n(E)$ denotes the nth Chern class of $E$.
\end{theorem}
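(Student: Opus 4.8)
The plan is to deduce this from the integrality of the Chern character on even‑dimensional spheres, which itself is a consequence of Bott periodicity. Write $u = u_{2n}$ for the chosen generator of $H^{2n}(S^{2n};\ZZ)$. Since $\widetilde H^{*}(S^{2n};\QQ)$ is concentrated in degree $2n$, every Chern class $c_i(E)$ with $0<i<n$ vanishes, and so does every product of positive‑degree classes landing in $H^{2n}$; hence the total Chern class is $c(E)=1+c_n(E)$, and Newton's identity expressing the $n$‑th power sum of the Chern roots in terms of the elementary symmetric polynomials collapses to give that the degree‑$2n$ component of $\ch(E)$ equals $\frac{(-1)^{n-1}}{(n-1)!}\,c_n(E)$. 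Thus $\ch(E) = n + \frac{(-1)^{n-1}}{(n-1)!}\,c_n(E)$ in $H^{*}(S^{2n};\QQ)$.

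The key step is to show that for any class in reduced $K$-theory, $\ch\colon \widetilde K^{0}(S^{2n}) \to H^{2n}(S^{2n};\QQ)$ lands in the integral lattice $H^{2n}(S^{2n};\ZZ)$ (in fact it is injective onto this lattice, but only integrality is needed). For $n=1$ this is the classical fact that $\widetilde K^{0}(S^{2})$ is generated by $\beta = [\mathcal{O}(1)] - 1$ with $\ch(\beta)$ equal to the generator $u_{2}$ of $H^{2}(S^{2};\ZZ)$. For general $n$, I would use that $S^{2n}$ is the $n$‑fold smash product of $S^{2}$, so that by Bott periodicity the external power $\beta^{\times n}$ generates $\widetilde K^{0}(S^{2n})\cong\ZZ$, and by multiplicativity of the Chern character under external products $\ch(\beta^{\times n}) = \ch(\beta)^{\times n} = u_{2}^{\times n}$, a generator of $H^{2n}(S^{2n};\ZZ)$.

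To conclude: given a rank‑$n$ complex vector bundle $E$ over $S^{2n}$, the class $[E] - n$ lies in $\widetilde K^{0}(S^{2n})$, so by the previous step its Chern character, which is just the degree‑$2n$ component $\frac{(-1)^{n-1}}{(n-1)!}\,c_n(E)$ computed above, is integral, say equal to $\ell u$ with $\ell\in\ZZ$. Rearranging yields $c_n(E) = (-1)^{n-1}\ell\,(n-1)!\,u = m\,(n-1)!\,u$ with $m := (-1)^{n-1}\ell \in \ZZ$, which is the claim.

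I expect the integrality step to be the only real obstacle: it is the one place where Bott periodicity genuinely enters, and it requires a little care to see that the external‑product generator of $\widetilde K^{0}(S^{2n})$ maps to an actual generator of the integral cohomology rather than merely to a rational multiple of one. (An alternative route is to note that rank‑$n$ bundles over $S^{2n}$ lie in the stable range, being classified by $\pi_{2n-1}(U(n))\cong\pi_{2n-1}(U)\cong\ZZ$, and to compute the top Chern class of the generator directly; but this again ultimately rests on Bott periodicity, so the $K$‑theoretic argument above is the cleaner one.) The remaining ingredients — the vanishing of lower Chern classes and decomposables on a sphere, and the Newton‑identity computation of the top component of $\ch$ — are routine.
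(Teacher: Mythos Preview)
The paper does not prove this theorem: it simply cites it as Corollary~9.8 of Husemoller's \emph{Fibre Bundles}, so there is no proof in the paper to compare against. Your argument is correct and is essentially the standard one (and indeed the one Husemoller gives): on $S^{2n}$ all lower Chern classes and all decomposables vanish, Newton's identity reduces the degree-$2n$ component of $\ch(E)$ to $\tfrac{(-1)^{n-1}}{(n-1)!}c_n(E)$, and integrality of $\ch$ on $\widetilde K^0(S^{2n})$---which you correctly derive from Bott periodicity and multiplicativity of $\ch$ under external products---forces $c_n(E)$ to be a multiple of $(n-1)!\,u$.
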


We remark that $\widetilde{K}(S^{2n})$ can be generated by an element of 
the form $E - \underline{n}$, where $E$ is a rank $n$ complex vector bundle and
$\underline{n}$ denotes a trivial rank $n$ bundle so
 the Chern character of $E - \underline{n}$ must be non-zero. So in the above theorem, we can always find a $E$ with 
$c_n(E) \neq 0$.

Bott's theorem will be applied to compute the differential $d_{2k+1}$ in 
the Serre spectral sequence associated to the bundle 
$Y_{2k+1} \rightarrow S^{2k+1}$. Before we can show how this is done, we give the reader a reminder of the theory of Chern classes and the Chern character.

Given a rank $n$ vector bundle, we define the Chern classes via the formal variables $x_1,\ldots,x_n$ so that
\begin{equation}\label{chern_classes}
\sum_{i=0}^nc_i = \prod_{i=1}^n(1+x_i)
\end{equation}
and define the power sums $s_n$ by
\begin{equation}\label{power_sums}
s_n = \sum_{i=1}^nx_i^n.
\end{equation}

The importance of the power sums comes from the fact that they give a nice definition for the Chern character of a vector bundle $E$
\begin{equation*}
Ch(E) = \sum_{n=0}^{\infty}\frac{s_n}{n!}.
\end{equation*}

Given two vector bundles $E$ and $F$ we have that 
$Ch(E \otimes F) = Ch(E)Ch(F)$, and hence we obtain the formula
\begin{equation*}
s_n(E\otimes F) = {n\choose 0}s_n(E)s_0(F) + \cdots + 
{n\choose k}s_{n-k}(E)s_k(F) + \cdots + s_0(E)s_n(F). 
\end{equation*}

A particular special case of the above formula will be important in our context. Fix a rank $k$ complex vector bundle $E$ over a $2k$-dimensional manifold $X$, such that $c_m(E) = 0$ for all $1 \leq m \leq k-1$ and 
$c_k(E) = v \in H^{2k}(X;\ZZ)$. Then for any $F \in K(X)$, letting 
$s_n$ denote the power sums for $F$ and $s_n(u)$ the power sums for 
$E \otimes F$, we have
\begin{equation}\label{power_sum_tensor}
s_n(u) = s_n + \frac{(-1)^{k+1}n!}{(k-1)!(n-k)!}vs_{n-k}.
\end{equation}

We can use this to obtain a formula for the differential $d_{2k+1}$ in the Serre spectral sequence for the bundle $Y_{2k+1}$ over $S^{2k+1}$.

\begin{proposition}\label{sss_differential_formula}
In the Serre spectral sequence for the bundle  
$Y_{2k+1} \rightarrow S^{2k+1}$, with fibre $Fred^j_{\mathcal{O}_{\infty}\otimes \mathcal{K}}$ and cohomology with $\RR$-coefficients, the differential 
$d_{2k+1}$ is given on the cohomology generators $s_n$ by
\begin{align*}
d_{2k+1}s_n &= \frac{(-1)^{k+1}n!}{(n-k)!}mu_{2k+1}s_{n-k} \text{ for } n \neq k \\
d_{2k+1}s_k &= (-1)^{k+1}k!mu_{2k+1}s_{0} = (-1)^{k+1}k!mju_{2k+1}
\end{align*}
where $u_{2k+1} \in H^{2k+1}(S^{2k+1};\ZZ)$ is a generator and $0 \neq m \in \ZZ$.
\end{proposition}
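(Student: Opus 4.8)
The plan is to realise $Y_{2k+1}$ via a clutching construction, to identify the clutching map with the operation of tensoring by a virtual line bundle over the equatorial $S^{2k}$, and then to let Proposition~\ref{clutch_action_cohom}, the power-sum identity \eqref{power_sum_tensor} and Bott's Theorem~\ref{Bott's_thm} do the remaining work. Concretely, $Y_{2k+1}\to S^{2k+1}$ is glued from two trivial $\Fred^{j}_{\cO_\infty\otimes\cK}$-bundles over the hemispheres $D^{2k+1}$ along a clutching map $\psi\colon S^{2k}\times\Fred^{j}_{\cO_\infty\otimes\cK}\to S^{2k}\times\Fred^{j}_{\cO_\infty\otimes\cK}$ covering $\mathrm{id}_{S^{2k}}$. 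Since the base is a sphere the Serre spectral sequence collapses to the Wang sequence, and by Proposition~\ref{clutch_action_cohom} we have $\psi^{*}(1\otimes x)=1\otimes x+w\otimes d_{2k+1}x$ for every $x\in H^{\bullet}(\Fred^{j}_{\cO_\infty\otimes\cK};\RR)$, where $w$ generates $H^{2k}(S^{2k};\RR)$ and $d_{2k+1}$ is the Serre differential. Thus $d_{2k+1}$ is read off as the $w$-component of $\psi^{*}$, and it suffices to compute $\psi^{*}(1\otimes s_{n})$ on each power-sum generator $s_{n}$ of $H^{\bullet}(\Fred^{j}_{\cO_\infty\otimes\cK};\RR)=\RR[s_{1},s_{2},\dots]$ (Proposition~\ref{ring_structure_Fred}).

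Next I would identify $\psi$. By construction $Y_{2k+1}$ is associated, via Corollary~4.7(ii) of \cite{DP} and the associated-bundle construction of Section~\ref{prelims}, to the principal $\Aut(\cO_\infty\otimes\cK)$-bundle over $S^{2k+1}$ classified by the generator $u_{2k+1}$; and the relevant action of $\Aut(\cO_\infty\otimes\cK)$ on $\Fred_{\cO_\infty\otimes\cK}\simeq\Fred$ is, on the level of homotopy, the action of rank-one virtual line bundles on $K$-theory recalled above. Hence $\psi$ acts on the fibre by tensoring with a virtual line bundle $L\in VPic(S^{2k})$, which we write as $L=\underline 1+(E-\underline k)$ for a rank-$k$ complex bundle $E$ on $S^{2k}$ with $c_{m}(E)=0$ for $1\le m\le k-1$ and $c_{k}(E)=v\in H^{2k}(S^{2k};\ZZ)$. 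Bott's Theorem~\ref{Bott's_thm} forces $v=m\,(k-1)!\,w$ for some $m\in\ZZ$; and since $Y_{2k+1}$ corresponds to a generator, $L$ is non-trivial, so by the remark following Theorem~\ref{Bott's_thm} we may take $v\ne 0$, i.e.\ $m\ne 0$.

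Finally I would run the computation. Let $\mathbf u$ be the universal $K$-theory class on $\Fred^{j}_{\cO_\infty\otimes\cK}$, so $s_{n}=s_{n}(\mathbf u)$ and $s_{0}=j$. Since $\psi$ tensors $\mathbf u$ by $L$ (pulled back from $S^{2k}$), the class $\psi^{*}(1\otimes s_{n})$ is the $n$-th power sum of $L\otimes\mathbf u$; the identity \eqref{power_sum_tensor} remains exact in this infinite-dimensional setting because $v$ is pulled back from $S^{2k}$, whence $v^{2}=0$. Applying \eqref{power_sum_tensor} and substituting $v=m(k-1)!\,w$ gives
\[
\psi^{*}(1\otimes s_{n})=1\otimes s_{n}+\frac{(-1)^{k+1}n!}{(k-1)!\,(n-k)!}\,v\otimes s_{n-k}=1\otimes s_{n}+\frac{(-1)^{k+1}m\,n!}{(n-k)!}\,w\otimes s_{n-k}.
\]
Comparing with Proposition~\ref{clutch_action_cohom}, and identifying the $H^{2k+1}(S^{2k+1})$-factor of the target of the Serre differential with $u_{2k+1}$, yields $d_{2k+1}s_{n}=\frac{(-1)^{k+1}m\,n!}{(n-k)!}u_{2k+1}s_{n-k}$ for $n\ne k$; at $n=k$ this specialises to $d_{2k+1}s_{k}=(-1)^{k+1}k!\,m\,u_{2k+1}s_{0}=(-1)^{k+1}k!\,mj\,u_{2k+1}$ using $s_{0}=j$; and for $n<k$ the coefficient $n!/(n-k)!$ vanishes, consistent with $d_{2k+1}s_{n}=0$.

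The main obstacle is the second step: verifying that the clutching map of the generating bundle $Y_{2k+1}$ is exactly ``tensor by a virtual line bundle $L$'', and that via Bott its top Chern datum $v=c_{k}(E)$ is a non-zero integer multiple of $(k-1)!\,w$. This is precisely where Corollary~4.7(ii) of \cite{DP}, together with the identification of the $\Aut(\cO_\infty\otimes\cK)$-action on $\Fred_{\cO_\infty\otimes\cK}$ with the virtual-Picard action, is indispensable. Granting that, the rest is the factorial arithmetic of \eqref{power_sum_tensor}, together with the normalisation built into Proposition~\ref{clutch_action_cohom}, which is what guarantees that the diagonal term $1\otimes s_{n}$ of $\psi^{*}$ contributes nothing to $d_{2k+1}$.
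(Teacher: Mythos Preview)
Your proposal is correct and follows essentially the same approach as the paper: both identify the clutching map of $Y_{2k+1}$ with tensoring by a virtual line bundle on $S^{2k}$, invoke Bott's Theorem~\ref{Bott's_thm} to pin down its top Chern class as $m(k-1)!w$, feed this into the power-sum identity~\eqref{power_sum_tensor}, and then read off $d_{2k+1}$ via Proposition~\ref{clutch_action_cohom}. Your write-up is in fact more explicit than the paper's in several places (the reason the higher-order terms in~\eqref{power_sum_tensor} vanish via $v^{2}=0$, the normalisation $s_{0}=j$, and the degenerate case $n<k$), and you rightly flag the identification of the $\Aut(\cO_\infty\otimes\cK)$-clutching with the virtual-Picard action as the step requiring the input from~\cite{DP}.
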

\begin{proof}
Let $u_{2k} \in H^{2k}(S^{2k};\ZZ)$ be a generator that suspends to the generator 
$u_{2k+1} \in H^{2k+1}(S^{2k+1};\ZZ)$.  Applying Bott's theorem \ref{Bott's_thm}, we can find a rank $n$ bundle $E$ such that 
$c_n(E) = m(n-1)!u$, for some integer $m \neq 0$. We then consider the virtual line bundle
$\widetilde{E} := E - \underline{n-1}$, where $\underline{n-1}$ denotes the trivial rank $n-1$ bundle on $S^{2k}$.

By tensoring with $\RR$, we let $\widetilde{E}_{\RR}$ denote the $1$-dimensional 
$\RR$-subspace of $VPic(S^{2k})\otimes \RR$ generated by $\widetilde{E} $. Then $c_n$ extends to an isomorphism
\begin{equation}\label{c_n_iso}
c_n : \widetilde{E}_{\RR} \rightarrow H^{2k}(S^{2k}; \RR).
\end{equation}

The above map,
together with the fact 
$Fred_{\mathcal{O}_{\infty}\otimes \mathcal{K}}$ represent a classifying space 
for $K$-theory, imply that cohomologically the action of the clutching map is 
given by formula \ref{power_sum_tensor} with $v = m(k-1)!u$.

Applying proposition \ref{clutch_action_cohom}, we obtain the formula
for $d_{2k+1}$. 

\end{proof}

Observe that the spectral sequence gives rise to the exact sequence

\begin{tikzcd} 
H^{2k}(Fred^j_{\mathcal{O}_{\infty}\otimes \mathcal{K}}) 
\arrow[r, "d_{2k+1}"]
 & 
H^{2k+1}(X)
\arrow[r, "\pi^*"] &
 H^{2k+1}(Fred^j_{\mathcal{O}_{\infty}\otimes \mathcal{K}})
\end{tikzcd}

where $\pi : Y_{2k+1} \rightarrow S^{2k+1}$ denotes the projection.
We thus see that 
\begin{align*}
\pi^*d_{2k+1}s_k &= \pi^*\bigg{(}(-1)^{k+1}k!mju_{2k+1}\bigg{)} \\
&= (-1)^{k+1}k!mj\pi^*u_{2k+1} \\
&= 0.
\end{align*}
This implies $\pi^*(u_{2k+1}) = 0$ if $j \neq 0$. If the bundle 
$Y_{2k+1}$ posses a section then
$\pi^*$ is injective. As $u_{2k+1} \neq 0 \in H^{2k+1}(S^{2k+1};\ZZ)$ it follows that 
$j = 0$. Therefore we find that there are no sections with non-zero
index. This means that when we define higher twisted K-theory for a smooth compact manifold $X$ twisted by a cohomotopy class, represented by a map $\lambda : X \rightarrow S^{2k+1}$,
we need only focus on the 
$Fred^0_{\mathcal{O}_{\infty}\otimes \mathcal{K}}$-component of the 
bundle $Y_{2k+1}$.
This is similar to what Atiyah and Segal observe for 3-twisted $K$-theory, see p. 7 
in \cite{AS1}.

\subsection{The Chern character for the universal $Fred_{\cO_\infty\otimes \cK}$-bundle over $S^{2k+1}$ }\label{chern_character_odd_sphere}

In this subsection, we will define a Chern character for the bundle $Y_{2k+1}$ that will land in a higher twisted cohomology group. This Chern character can be seen as a lift of the classical Chern character on 
$Fred_{\mathcal{O}_{\infty}\otimes \mathcal{K}}$, and will play a pivotal role in the next subsection where we define a Chern character for smooth compact manifolds with a fixed cohomotopy twist.

We recall that $s_n \in H^{2n}(Fred_{\mathcal{O}_{\infty}\otimes \mathcal{K}})$ 
represents a cohomological generator, given by proposition \ref{ring_structure_Fred}.

We will let $\eta = \pi_{2k+1}^*u_{2k+1} \in H^{2k+1}(Y_{2k+1};\RR)$, where 
$u_{2k+1} \in H^{2k+1}(S^{2k+1};\RR)$ is a generator represented by a closed form whose support does not contain the poles of $S^{2k+1}$. 
Furthermore, 
let $F_{\infty}$ denote the fibre of $Y_{2k+1}$ over the north pole of 
$S^{2k+1}$, which we denote by $\{\infty\}$, 
and $F_0$ the fibre of $Y_{2k+1}$ over the south pole, which we denote by 
$\{0\}$.

As mentioned at the end of section \ref{univ_fib}, there are no sections of non-zero index in higher twisted K-theory. This means it suffices to restrict to the 
$Fred^0_{\mathcal{O}_{\infty}\otimes \mathcal{K}}$-component of the 
bundle $Y_{2k+1}$, and we shall do this from here on in.

In order to construct a Chern character, we will start by constructing  forms $S_n$ on $Y_{2k+1}$, for $n \geq 0$, that satisfy the following two conditions:
\begin{itemize}
\item[(1)] $S_n\vert_{F_0} = s_n$
\item[(2)] $dS_n = \frac{(-1)^{k+1}n!}{(n-k)!}m\eta S_{n-k}
= \frac{(-1)^{k+1}n!}{(n-k)!}\eta_{2k+1} S_{n-k}$
\end{itemize}
where we remind the reader that the coefficient of the right hand side of (2) comes from proposition \ref{sss_differential_formula}, and we have defined
$\eta_{2k+1} = m\eta$.

The construction proceeds via induction on $n$. We define $S_0 = 0$. Suppose we have constructed $S_m$ for all $m < n$ that satisfies the above two conditions. Our goal is to construct $S_n$. 

The bundle
$Y_{2k+1}$ can be trivialised over $S^{2k+1}\backslash\{\infty\}$, we have that 
$Y_{2k+1}\backslash F_{\infty} \cong D^{2k+1}\times 
Fred_{\mathcal{O}_{\infty}\otimes \mathcal{K}}$. In particular, we can pull back
the form $s_n$ on $Fred_{\mathcal{O}_{\infty}\otimes \mathcal{K}}$, obtaining a form $s_n'$ on $Y_{2k+1}\backslash F_{\infty}$. 

Let $\rho$ be a bump function on $Y_{2k+1}$ supported away from $F_{\infty}$.
Define $S_n' := \rho s_n'$. By definition, $S_n'$ is an $2n$ form defined on all 
of $Y_{2k+1}$. However, $S_n'$ is not necessarily closed on all of $Y_{2k+1}$. 
We do 
have that $dS_n' = 0$ outside a small neighbourhood of $F_{\infty}$ (i.e. in a small neighbourhood of $F_0$). As $d(dS_n') = 0$ globally on $Y_{2k+1}$, and 
$dS_n' = 0$ in a neighbourhood of $F_0$, we have that
$dS_{n}'$ defines a class in the relative group 
$H^{2n+1}(Y_{2k+1}, F_{0})$. By proposition \ref{sss_differential_formula} this class can also be represented by $\frac{(-1)^{k+1}n!}{(n-k)!}m\eta S_{n-k}$.
It follows that on the level of forms we have
\begin{equation*}
dS_n' - \frac{(-1)^{k+1}n!}{(n-k)!}m\eta S_{n-k} = d\omega_n
\end{equation*} 
where $\omega_n$ is a $2n$-form on $Y_{2k+1}$ that is supported away from 
$F_{\infty}$. We then define $S_n := S_n' - \omega_n$. It is clear that 
$S_n$ satisfies conditions (1) and (2).

We can then define an even form $Ch^{\textbf{0}}_{2k+1}$ on $Y_{2k+1}$ by
\begin{equation}\label{chern_character_on_sphere}
Ch^{\textbf{0}}_{2k+1} = \sum_{n=1}^{\infty}\frac{S_n}{n!}.
\end{equation}
We observe that $D_{\eta_{2k+1}}Ch^{\textbf{0}}_{2k+1} = (d-\eta_{2k+1})Ch^{\textbf{0}}_{2k+1} = 0$, and hence 
$Ch^{\textbf{0}}_{2k+1}$ defines 
a cocycle for higher twisted cohomology, with twist given by the cohomology class $[\eta_{2k+1}]$, of the bundle
$Y_{2k+1}$. Therefore
$Ch^{\textbf{0}}_{2k+1} \in H^{\textbf{0}}_{\eta_{2k+1}}(Y_{2k+1})$, the even twisted cohomology group, and from 
condition (1) above we see that it lifts the classical Chern character in 
$H^*(Fred_{\mathcal{O}_{\infty}\otimes \mathcal{K}})$.


\subsection{The Chern character for smooth manifolds with a cohomotopy twist}\label{chern_character_general}

In this subsection we use the Chern character from the previous subsection to construct a Chern character for smooth compact manifolds $X$. Our Chern character has the restriction that it can only be defined for twists corresponding to cohomotopy classes $[X, S^{2k+1}]$. This latter set is known as the $2k+1$ cohomotopy set of $X$. In general, it is just a set and not a group. However,
we point out that if $X$ is a finite CW-complex of dimension $n$, where $n \leq 2k+1$, then the cohomology
group $H^{2k+1}(X;\ZZ)$ is given by the cohomotopy classes 
$[X, S^{2k+1}]$. This follows from the fact that elements 
in $H^{2k+1}(X;\ZZ)$ are represented by maps into the Eilenberg-Maclane space
$K(\ZZ,2k+1)$, and by cellular approximation such a map can be assumed to be contained in the $2k+1$-skeleton of $K(\ZZ,2k+1)$, which is $S^{2k+1}$. Furthermore, if $X$ is a finite CW complex of dimension $n \leq 2k-2$, then by work of Borsuk, see \cite{borsuk_1} and chapter 2 section 11 in 
\cite{borsuk_2}, it is known that $\pi^k(X)$ is an abelian group. In our case, $X$ will always be a smooth compact manifold, hence can be given the structure of a finite CW complex. Thus if $dim(X) \leq 2k+1$, we have that
$H^{2k+1}(X;\ZZ) = [X, S^{2k+1}]$ and hence any twist corresponding to a class in $[X, S^{2k+1}]$ is necessary cohomological. However, if $dim(X) > 2k+1$ then elements of $[X, S^{2k+1}]$ could determine twists that are not necessarily cohomological. One final point to make is that our Chern character we define for $X$ will be defined via a pull back of the smooth form $Ch^{\textbf{0}}_{2k+1}$. Therefore, we will need to restrict ourselves to smooth maps from
$X$ to $S^{2k +1}$. This will not cause any problems as each class in $[X, S^{2k+1}]$ has a (not necessarily unique) smooth representative. When taking representatives of such cohomotopy classes, we will always be taking smooth ones without saying.

We fix a cohomotopy class $[\lambda] \in [X, S^{2k+1}]$, represented by a smooth map 
$\lambda : X \rightarrow S^{2k+1}$.  We define $Fred_{\lambda}(X) = \lambda^*(Y_{2k+1})$, which is a 
$Fred_{\mathcal{O}_{\infty}\otimes \mathcal{K}}$-bundle over $X$. The bundle $Fred_{\lambda}(X)$ depends on the representative map $\lambda : X \rightarrow S^{2k+1}$, however its isomorphism class only depends on the homotopy class of $\lambda$.

Following Atiyah and Segal's approach in \cite{AS1}, we define the higher twisted 
K-theory, twisted by the class $\lambda$, as follows

\begin{definition}
Given a smooth compact manifold $X$, and a cohomotopy class
$[\lambda] \in [X, S^{2k+1}]$, represented by $\lambda : X \rightarrow S^{2k+1}$, we define the higher twisted K-theory of $X$,
twisted 
by the cohomotopy class $[\lambda]$ and denoted
$K^{\textbf{0}}_{\lambda}(X)$, to consist of homotopy
classes in $[X, Y_{2k+1}]$ that lift the map $\lambda$. In other words, an element of 
$K^{\textbf{0}}_{\lambda}(X)$ is a homotopy class of a smooth map $F : X \rightarrow Y_{2k+1}$ such
that the following digram commutes

\begin{center}
\begin{tikzcd} 
{}
 & 
Y_{2k+1}
\arrow[d, "\pi"] \\
 X  \arrow[r, swap, "\lambda"] \arrow[ur, "F"] &
 S^{2k+1}. 
\end{tikzcd}
\end{center}
\end{definition}

We remark that an equivalent way to define $K^{\textbf{0}}_{\lambda}(X)$ would be as homotopy classes of sections of the bundle $Fred_{\lambda}(X)$. This follows from the
fact that $Fred_{\lambda}(X) = \lambda^*Y_{2k+1}$. We also remark that as $Fred_{\mathcal{O}_{\infty}\otimes \mathcal{K}}$ has the structure of a smooth Banach manifold, see section \ref{prelims}, it follows that $Y_{2k+1}$ can be given the structure of a smooth Banach manifold. Therefore, the concept of a smooth map 
$F : X \rightarrow Y_{2k+1}$ is well defined.

The group structure on $K^{\textbf{0}}_{\lambda}(X)$ comes from the group
structure of the fibres of the bundle $Y_{2k+1}$. I.e. given $[F]$ and $[G] \in 
K^{\textbf{0}}_{\lambda}(X)$, we define $[F] + [G]$ to be the class $[F + G]$. Note that 
$F + G$ is defined as the map such that $F + G(x) := F(x) + G(x)$, which is well 
defined because $F(x), G(x) \in \pi^{-1}(f(x))$ and hence 
$F(x) + G(x) \in \pi^{-1}(f(x))$. By Bott periodicity, we see that
the groups $K^{\textbf{0}}_{\lambda}(X)$ are the even degree groups of the twisted $K$-theory of $X$. The odd case will be defined in the next subsection.

We now define a twisted Chern character for $X$. We recall towards the end of the previous section we defined a Chern character 
$Ch^{\textbf{0}}_{2k+1} \in H_{\eta_{2k+1}}(Y_{2k+1})$. We also recall that the 
cohomology class $\eta_{2k+1} \in H^{2k+1}(Y_{2k+1})$ was chosen via pulling back a particular generator of $H^{2k+1}(S^{2k+1})$, which we denoted $u_{2k+1}$. Therefore, given a 
class $[F] \in K^{\textbf{0}}_{\lambda}(X)$, represented by a smooth map $F : X \rightarrow Y_{2k+1}$, we have
$F^*(\eta_{2k+1})= F^*\pi^*(u_{2k+1}) = \lambda^*(u_{2k+1})$.

We define the twisted Chern character
\begin{equation*}
Ch^{\textbf{0}}_{\lambda} : K^{\textbf{0}}_{\lambda}(X) \rightarrow H^{\textbf{0}}_{\lambda^*(u_{2k+1})}(X)
\end{equation*}
by $Ch^{\textbf{0}}_{\lambda}([F]) = [F^*Ch^{\textbf{0}}_{2k+1}]$, where $F$ is a smooth representative. The fact that this defines an element of the twisted cohomology group $H_{\lambda^*(u_{2k+1})}(X)$ follows by naturality and the fact that $Ch^{\textbf{0}}_{2k+1}  \in  H^{\textbf{0}}_{\eta_{2k+1}}(Y_{2k+1})$.

\subsection{The odd case}\label{odd_ch}

In the previous subsection we witnessed the construction of a Chern character for cohomotopy twists, which gave us a mapping from the higher twisted K-theory of a smooth compact manifold to its twisted cohomology. Strictly speaking, we constructed an even Chern character that mapped even twisted K-theory to even twisted cohomology. The purpose of this subsection is to extend this construction to the odd case. Namely, we construct an odd Chern character that will map odd twisted K-theory to odd twisted cohomology.

We have already seen that $Aut(\mathcal{O}_{\infty}\otimes \mathcal{K})$ acts on 
$Fred_{\mathcal{O}_{\infty}\otimes \mathcal{K}}$ via conjugation, see section \ref{prelims}. Letting $\Omega Fred_{\mathcal{O}_{\infty}\otimes \mathcal{K}}$ denote the smooth pointed loop space. In other words, an element
$\gamma \in 
\Omega Fred_{\mathcal{O}_{\infty}\otimes \mathcal{K}}$ consists of a smooth pointed map $\gamma : S^1 \rightarrow 
Fred_{\mathcal{O}_{\infty}\otimes \mathcal{K}}$, where the base point of
$Fred_{\mathcal{O}_{\infty}\otimes \mathcal{K}}$ is the identity operator.
We see that this
action extends to an action on 
$\Omega Fred_{\mathcal{O}_{\infty}\otimes \mathcal{K}}$ in the pointwise fashion. In other words, given a continuous loop 
$\gamma : S^1 \rightarrow  Fred_{\mathcal{O}_{\infty}\otimes \mathcal{K}}$ and an element $A \in Aut(\mathcal{O}_{\infty}\otimes \mathcal{K})$. We define
$A\cdot\gamma : S^1 \rightarrow Fred_{\mathcal{O}_{\infty}\otimes \mathcal{K}}$ to be the loop $(A\cdot\gamma)(t) := A(\gamma(t))$, where $A(\gamma(t))$ is the conjugation action of $A$ on the point $\gamma(t) \in Fred_{\mathcal{O}_{\infty}\otimes \mathcal{K}}$.
We point out that
given an $Aut(\mathcal{O}_{\infty}\otimes \mathcal{K})$-bundle we can, via the above action and the associated bundle construction, produce an 
$\Omega Fred_{\mathcal{O}_{\infty}\otimes \mathcal{K}}$-bundle. 

As we saw in section \ref{univ_fib}, the generator 
$u_{2k+1} \in H^{2k+1}(S^{2k+1};\ZZ)$ gives rise to a principal 
$Aut(\mathcal{O}_{\infty}\otimes \mathcal{K})$-bundle via corollary 4.7 (ii) in \cite{DP}. Using this bundle, and the above action on $\Omega Fred_{\mathcal{O}_{\infty}\otimes \mathcal{K}}$, the associated bundle construction produces a  
$\Omega Fred_{\mathcal{O}_{\infty}\otimes \mathcal{K}}$-bundle over $S^{2k+1}$ corresponding to $u_{2k+1}$, which we shall denote by $L_{2k+1}$.

There is an evaluation map 
$e: \Omega Fred_{\mathcal{O}_{\infty}\otimes \mathcal{K}} \times S^1 
\rightarrow Fred_{\mathcal{O}_{\infty}\otimes \mathcal{K}}$ defined by 
$e(\gamma, t) = \gamma(t)$.

Given a cohomology generator $s_n \in 
H^{2n}(Fred_{\mathcal{O}_{\infty}\otimes \mathcal{K}})$ we let 
\begin{equation*}
\Omega s_n := \int_{S^1}e^*s_n \in 
H^{2n-1}(\Omega Fred_{\mathcal{O}_{\infty}\otimes \mathcal{K}}).
\end{equation*}

The above evaluation map induces an evaluation map, which we also denote by $e$
\begin{equation}
e : L_{2k+1} \times S^1 \rightarrow Y_{2k+1}
\end{equation}
as the reader can check. 

We recall from section \ref{chern_character_odd_sphere} that we constructed forms $S_n \in \Omega^{2n}(Y_{2k+1})$. We define 
$T_n := e^*S_n \in \Omega^{2n}(L_{2k+1}\times S^1)$. Integrating over the fibre, we define forms
\begin{equation}
\Omega S_n := \int_{S^1}T_n \in \Omega^{2n-1}(L_{2k+1}).
\end{equation}

We also recall that the forms $S_n$ satisfied the formula 
$dS_n = \frac{(-1)^{k+1}n!}{(n-k)!}\eta_{2k+1} S_{n-k}$, where the form 
$\eta_{2k+1}= m\eta$ is a $2k+1$-form on $Y_{2k+1}$ coming from the pull-back of a certain generating $2k+1$-form on $S^{2k+1}$, see section 
\ref{chern_character_odd_sphere}
for details.

The $2k+1$-form $\eta_{2k+1}$ is constant along the fibres of $Y_{2k+1}$, as it comes from the pull-back of a form on the base via the projection map. In particular, since the evaluation map is fibre preserving, it follows that the form
$e^*(\eta_{2k+1})$ does not have any $d\theta$-components, $\theta$ being a coordinate on $S^1$.

\begin{proposition}
$d\Omega S_n = \lambda(n,k)e^*(\eta_{2k+1})\Omega S_{n-k}$, where
$\lambda(n, k) := \frac{(-1)^{k+1}n!}{(n-k)!}$.
\end{proposition}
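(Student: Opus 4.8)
The plan is to compute $d\Omega S_n$ by differentiating under the fibre integral and using the formula for $dS_n$ established in Section \ref{chern_character_odd_sphere}. Since $\Omega S_n = \int_{S^1} T_n$ with $T_n = e^*S_n$, I would first invoke the Leibniz rule for integration over the fibre of the trivial bundle $L_{2k+1}\times S^1 \to L_{2k+1}$, namely $d\int_{S^1}T_n = \int_{S^1}dT_n$ (there is no boundary term since $S^1$ is closed, and the sign is controlled by the degree of the fibre). Then $dT_n = e^*(dS_n) = \frac{(-1)^{k+1}n!}{(n-k)!}\,e^*(\eta_{2k+1})\,e^*(S_{n-k}) = \lambda(n,k)\,e^*(\eta_{2k+1})\,T_{n-k}$, using naturality of $d$ under the evaluation map $e$ and the multiplicativity of pullback.

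The key step is then to push the factor $e^*(\eta_{2k+1})$ out of the fibre integral. This is where the observation immediately preceding the proposition is used: because $\eta_{2k+1}$ is pulled back from $S^{2k+1}$ via $\pi$ and $e$ is fibre-preserving, $e^*(\eta_{2k+1})$ has no $d\theta$-component, i.e.\ it is "constant along the $S^1$-direction" and is itself pulled back from $L_{2k+1}$ along the projection $p:L_{2k+1}\times S^1 \to L_{2k+1}$. Hence by the projection formula for integration over the fibre, $\int_{S^1} (p^*\alpha)\wedge \beta = \alpha \wedge \int_{S^1}\beta$ (up to a sign depending on $\deg\alpha = 2k+1$, which is odd), we obtain
\begin{equation*}
d\Omega S_n = \lambda(n,k)\int_{S^1} e^*(\eta_{2k+1})\,T_{n-k} = \pm\,\lambda(n,k)\,e^*(\eta_{2k+1})\wedge \int_{S^1}T_{n-k} = \pm\,\lambda(n,k)\,e^*(\eta_{2k+1})\,\Omega S_{n-k}.
\end{equation*}
Here, by a slight abuse of notation, $e^*(\eta_{2k+1})$ on the right denotes the form on $L_{2k+1}$ to which it descends; alternatively one writes it as $\hat e^*(\eta_{2k+1})$ where $\hat e:L_{2k+1}\to Y_{2k+1}$ is the induced map, but the paper's convention already identifies these.

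The main obstacle is bookkeeping the Koszul signs: both the Leibniz rule $d\int_{S^1} = (-1)^{?}\int_{S^1}d$ and the projection formula $\int_{S^1}(p^*\alpha\wedge\beta) = (-1)^{?}\alpha\wedge\int_{S^1}\beta$ carry signs depending on the degrees of the fibre ($1$) and of $\alpha$ ($2k+1$, odd), and one must check these cancel so that the stated identity holds on the nose with coefficient $\lambda(n,k)$ and no extra sign. I expect they do cancel — moving a degree-$1$ factor past the degree-$(2k+1)$ form $\eta_{2k+1}$ contributes $(-1)^{2k+1}=-1$, which offsets the $(-1)$ from commuting $d$ past the $1$-dimensional fibre integration — but verifying this carefully is the one genuinely delicate point; everything else is naturality of pullback and the defining property $dS_n = \lambda(n,k)\eta_{2k+1}S_{n-k}$.
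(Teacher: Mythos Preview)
Your proposal is correct and follows essentially the same route as the paper: differentiate under the $S^1$-integral, apply $e^*$ to the identity $dS_n=\lambda(n,k)\eta_{2k+1}S_{n-k}$, and then pull $e^*(\eta_{2k+1})$ out of the fibre integral using the observation that it has no $d\theta$-component. The paper's own proof is in fact less fastidious than yours about the Koszul signs---it simply writes $d\int_{S^1}=\int_{S^1}d$ and pulls the factor out without comment---so your extra care there is not wasted, but no additional idea is required.
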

\begin{proof}
We differentiate under the integral, which is possible as $S^1$ is compact and all our forms are smooth.
\begin{align*}
d\Omega S_n &= d\int_{S^1}e^*S_n \\
&= \int_{S^1}de^*S_n \\
&= \int_{S^1}e^*dS_n \\
&= \int_{S^1}e^* \bigg{(}\frac{(-1)^{k+1}n!}{(n-k)!}\eta_{2k+1} S_{n-k}\bigg{)} \\
&= \int_{S^1}\frac{(-1)^{k+1}n!}{(n-k)!}e^*(\eta_{2k+1}) e^*(S_{n-k}) \\
&= \frac{(-1)^{k+1}n!}{(n-k)!}e^*(\eta_{2k+1})\Omega S_{n-k}
\end{align*}
where the last equality comes from the fact that $e^*(\eta_{2k+1})$ will not  have any $d\theta$-components, and so does not contribute to the integral.

\end{proof}

We now define a Chern character for $L_{2k+1}$, that will represent the universal odd Chern character for cohomotopy twists of degree $2k+1$, by
\begin{equation}
Ch^{\textbf{1}}_{2k+1} = \sum \frac{\lambda(n,k)\Omega S_n}{n!}.
\end{equation}

It is easy to see that $D_{e^*(\eta_{2k+1})}Ch^{\textbf{1}}_{2k+1} = 
(d - e^*(\eta_{2k+1}))Ch^{\textbf{1}}_{2k+1} = 0$. Hence $Ch^{\textbf{1}}_{2k+1}$ is an element of the odd twisted cohomology 
$H^*_{e^*(\eta_{2k+1})}(L_{2k+1})$.

Given the odd Chern character for $L_{2k+1}$, we can define a general odd Chern character for any space $X$ with a degree $2k+1$-cohomotopy twist. This will be a map from the odd higher twisted K-theory of $X$ to twisted cohomology.

Given a smooth compact manifold $X$ with a cohomotopy class represented by a smooth map 
$\lambda : X \rightarrow S^{2k+1}$, we observe that 
$\Omega Fred_{\lambda}(X) := \lambda^*L_{2k+1}$ is a $\Omega Fred_{\mathcal{O}_{\infty}\otimes \mathcal{K}}$-bundle over $X$. Furthermore, the isomorphism class of this bundle over $X$ depends on the homotopy class of $\lambda$.
This allows us to define the odd twisted K-theory of $X$, corresponding to the twist $\lambda$, in a similar way to the even case done in section \ref{chern_character_general}.

\begin{definition}
Given a smooth compact manifold $X$, and a cohomotopy class
$[\lambda] \in [X, S^{2k+1}]$ represented by a smooth map 
$\lambda : X \rightarrow S^{2k+1}$, we define the odd higher twisted K-theory of $X$,
twisted by the cohomotopy class $[\lambda]$ and denoted
$K^{\textbf{1}}_{\lambda}(X)$, to consist of homotopy
classes in $[X, L_{2k+1}]$ that lift the map $\lambda$. In other words, an element of 
$K^{\textbf{1}}_{\lambda}(X)$ is a homotopy class of a map $F : X \rightarrow L_{2k+1}$ such
that the following digram commutes

\begin{center}
\begin{tikzcd} 
{}
 & 
L_{2k+1}
\arrow[d, "\pi"] \\
 X  \arrow[r, swap, "\lambda"] \arrow[ur, "F"] &
 S^{2k+1}. 
\end{tikzcd}
\end{center}
\end{definition}

We remark that an equivalent way to define $K^{\textbf{1}}_{\lambda}(X)$ would be as homotopy classes of sections of the bundle 
$\Omega Fred_{\lambda}(X)$. This follows from the
fact that $\Omega Fred_{\lambda}(X) = \lambda^*L_{2k+1}$. Furthermore, as 
$Fred_{\mathcal{O}_{\infty}\otimes \mathcal{K}}$ is a smooth Banach manifold, it follows that
$\Omega Fred_{\mathcal{O}_{\infty}\otimes \mathcal{K}}$ is also a smooth Banach manifold. This implies the bundle $L_{2k+1}$ can be given the structure of a smooth Banach manifold, and hence the concept of a smooth map from $X$ to $L_{2k+1}$ is well defined.

We define the odd twisted Chern character
\begin{equation*}
Ch^{\textbf{1}}_{\lambda} : K^{odd}_{\lambda}(X) \rightarrow H^{\textbf{1}}_{\lambda^*(u_{2k+1})}(X)
\end{equation*}
by $Ch^{\textbf{1}}_{\lambda} ([F]) = [F^*Ch^{\textbf{1}}_{\lambda} ]$. We remind the reader that when we form the pull back differential form $F^*Ch^{\textbf{1}}_{\lambda}$, we always take a smooth representative $F$. 
The fact that this defines an element of the odd twisted cohomology group $H_{\lambda^*(u_{2k+1})}(X)$ follows by naturality and the fact that $Ch^{\textbf{1}}_{2k+1} \in  H^{\textbf{1}}_{e^*(\eta_{2k+1})}(L_{2k+1})$.

\section{Spherical T-duality}

\subsection{Historical introduction}
We begin with a historical introduction.

T-duality for pairs consisting of a circle bundle, together with a degree 3 H-flux, 
was originally studied in detail in \cite{BEM04a, BEM04b}
with contributions by several others later.  In string theory, T-dual pairs are distinct compactification 
manifolds that cannot be distinguished by any experiment, which is the notion of isomorphism relevant in physics.  
This equivalence in physics implies the isomorphisms of a number of other mathematical structures, such as Courant 
algebroids \cite{cavalcanti}, generalized complex structures \cite{cavalcanti} and twisted K-theory \cite{BEM04a}, see also 
\cite{BS,RR88}. It turns out that all of these structures are physically relevant.

Recently we initiated the study of spherical T-duality 
for principal $ SU(2)$-bundles in \cite{BEM14} and for non-principal $SU(2)$-bundles in  \cite{BEM142}, together with further insights in \cite{BEM18}. 
Let $P$ be a principal $ SU(2)$-bundle over $M$ and $H$ a 7-cocycle on $P$,
\begin{equation}\label{SU(2)1}
\begin{CD}
 SU(2) @>>> \,  P \\
&& @V \pi VV \\
&& M \end{CD}
\end{equation}

Principal $ SU(2)$-bundles over a compact oriented four dimensional manifold $M$ are classified 
by $H^4(M;\ZZ) \cong \ZZ$ via the 2nd Chern class $c_2(P)$. This can be seen using the well known isomorphism,
$H^4(M;\ZZ) \cong [M, S^4]\cong \ZZ$ and noting that there is a canonical principal $ SU(2)$-bundle 
$P\to S^4$, known as the Hopf bundle, whose 2nd Chern class is the generator of $ H^4(S^4;\ZZ)\cong \ZZ$.  
The orientation of $M$ and $ SU(2)$ imply that $\pi_*$ is a canonical isomorphism $ H^7(P;\ZZ)\cong H^4(M;\ZZ)\cong\Z$.  
The dual principal $ SU(2)$-bundle,
\begin{equation}\label{SU(2)1}
\begin{CD}
 SU(2) @>>> \,  \widehat P \\
&& @V \widehat\pi VV \\
&& M \end{CD}
\end{equation}
is defined by $c_2(\widehat P)=\pi_*H$ while the dual 7-cocycle $\widehat H\in H^7(\widehat P)$ satisfies $c_2(P) = 
{\widehat \pi}_*{\widehat H}$ by the isomorphism $\widehat\pi_* \colon  H^7(\widehat P;\ZZ)\cong H^4(M;\ZZ)\cong\Z$.  
We proved that this spherical T-duality map induces degree-shifting isomorphisms between the real and integral twisted 
cohomologies of $P$ and $\widehat P$ and also between the 7-twisted K-theories.

In  \cite{BEM142}, we extended spherical T-duality 
to (oriented) non-principal $ SU(2)$-bundles. 
While principal $SU(2)$-bundles correspond to unit sphere bundles
of quaternionic line bundles,  (oriented) non-principal $ SU(2)$-bundles
correspond to unit sphere bundles
of rank $4$ oriented real Riemannian vector bundles.
A striking new phenomenon in the non-principal case is 
that when the base $M$ is a compact oriented simply-connected 4 dimensional manifold, and given 
an oriented non-principal $ SU(2)$-bundle $E$ with $H$ a 7-cocycle on $E$,
then for each integer, we will show that there is an {\em infinite} lattice of spherical T-duals with 7-cocycle flux over $M$,
in stark contrast to the case of principal $ SU(2)$-bundles as described above.  
One reason is because in the non-principal
bundle case, the Euler class does not determine (oriented) non-principal $ SU(2)$-bundles when the base is as above, 
but in addition the 2nd Stiefel-Whitney
class and the Pontryagin class are also needed for the classification. However, it 
is only the Euler class (and its transgression) that is needed in the Gysin sequence and also to prove the isomorphisms
of 7-twisted integral cohomologies, and in addition the 2nd Stiefel-Whitney class is needed to prove the isomorphisms
of 7-twisted K-theories.

\subsection{Spherical T-duality computations}

Let $M$ be a compact, oriented manifold of dimension $2n$. 
Since in high dimensions, the classification up to isomorphism of $S^{2n-1}$ bundles over $M$ by cohomology 
is complicated, or unknown, we only consider the Euler characteristic of such bundles. In general 
there may be several bundles (over a fixed base $M$) that have the same Euler characteristic, but they all have the same cohomology
groups and the same K-theory groups, as we will show.

\subsection{Spherical T-dual pairs}
 
 Here we define spherical T-dual pairs over $M$ as above. Suppose that $n=3$ or $n>4$.
Consider the sphere bundle $Z $ over $M$ with structure group $SO(2n)$
 \begin{equation}\label{Spherebdle}
\begin{CD}
S^{2n-1} @>>> \,  Z  \\
&& @V\pi VV \\
&& M \end{CD}
\end{equation}
Then it has an Euler class $e(Z ) \in H^{2n}(M, \ZZ)$ which is an even integer $2k$ (\cite{Wals}).
Let $H $ be a degree $(4n-1)$-form over $Z $ such that $[H ] \in H^{4n-1}(Z , \ZZ)$ is the even integer $2l$. 
Then $(Z , H )$ has a spherical T-dual $(\widehat Z , \widehat H )$ defined as follows.  
  \begin{equation}\label{Spherebdle}
\begin{CD}
S^{2n-1} @>>> \,  \widehat Z  \\
&& @V \widehat\pi VV \\
&& M \end{CD}
\end{equation}
It has Euler class $e(\widehat Z ) = \pi_*(H )\in H^{2n}(M, \ZZ)$ that is equal to $2l$. 
Since $ \widehat\pi_* : H^{4n-1}(\widehat Z, \ZZ )\longrightarrow H^{2n}(M, \ZZ) $ is an isomorphism, we define $\widehat H
 = {\widehat\pi_*}^{-1}  \pi_*(H)$.

To summarise, $(Z , H )$ and $(\widehat Z , \widehat H )$ are defined to be {\em spherical T-dual pairs} over $M$.\\

Now we finish by defining spherical T-dual pairs over $M$ when $n=4$ i.e. an $8$ dimensional manifold as above. 
Consider the special sphere bundle $Z $ over $M$ with structure group $SO(8)$
 \begin{equation}\label{Spherebdle}
\begin{CD}
S^{7} @>>> \,  Z  \\
&& @V\pi VV \\
&& M\end{CD}
\end{equation}
Then the Euler class $e(Z ) \in H^{8}(M, \ZZ)$ can be any  integer $k$, since there is a Hopf bundle over $S^8$.
Let $H $ be a $15$-form over $Z $ such that $[H ] \in H^{15}(Z , \ZZ)$ is the integer $l$. 
Then $(Z , H )$ has a spherical T-dual $(\widehat Z , \widehat H )$ defined as follows.  
  \begin{equation}\label{Spherebdle}
\begin{CD}
S^{7} @>>> \,  \widehat Z  \\
&& @V \widehat\pi VV \\
&& M\end{CD}
\end{equation}
where  the Euler class $e(\widehat Z ) = \pi_*(H )\in H^4(M, \ZZ)$  is equal to $l$. 
Since $ \widehat\pi_* : H^{15}(\widehat Z, \ZZ )\longrightarrow H^{8}(M, \ZZ) $ is an isomorphism, we define $\widehat H  
 = {\widehat\pi_*}^{-1}  \pi_*(H )$ which equals $k$.

To summarise, $(Z , H )$ and $(\widehat Z , \widehat H )$ are {\em spherical T-dual pairs} over $M$.


\subsection{Gysin sequence and the cohomology of $Z $ and $\widehat Z  $}\label{gysin_cohom} 
Since  $Z $ is a sphere bundle over 
$M$ for $n=3$ or for $n>4$, with Euler class $e(Z)=2k$, there are Gysin 
sequences that will compute the cohomology of $Z$. Assume here that $k\ne 0$. The Gysin sequence is
$$
\to H^j(M, \ZZ) \stackrel{\pi^*}{\to} H^j(Z , \ZZ) \stackrel{\pi_*}{\to} H^{j-(2n-1)}(M, \ZZ)  \stackrel{\cup 2k}{\to} H^{j+1}(M, \ZZ) \to
H^{j+1}(Z , \ZZ)  \stackrel{\pi_*}{\to} H^{j+2-2n}(M, \ZZ) \to
$$ 
It follows that\\ 
\begin{eqnarray*}
H^j(Z , \ZZ) &\cong& H^j(M, \ZZ),  \quad 0\le j < 2n-1, i.e.\\
H^0(Z , \ZZ) &=&\ZZ, \,\, H^j(Z , \ZZ)=H^j(M, \ZZ) \quad \text{ for}\quad 0\le j \le 2n-1\\
H^{2n}(Z , \ZZ) &=&\ZZ_{2k}\\
H^j(Z , \ZZ)&=&H^{j+1-2n}(M, \ZZ), \quad 2n<j<4n-2\\
H^{4n-1}(Z , \ZZ)&=&\ZZ.\\
H^{odd}(Z , \ZZ) &\cong& \ZZ \bigoplus_{j=1}^{7} H^{j}(M,\ZZ).
\end{eqnarray*}
 \begin{eqnarray}
H^{even}(Z , \ZZ)&\cong&   \ZZ \oplus \ZZ_{2k} \bigoplus_{j=1}^{2n-1} H^{j}(M,\ZZ)  \\
H^{odd}(Z , \ZZ) &\cong&  \ZZ \bigoplus_{j=1}^{2n-1} H^{j}(M,\ZZ).
\end{eqnarray}
When $k=0$, then we get that 
 \begin{eqnarray}
H^{even}(Z , \ZZ)&\cong&   \ZZ \oplus \ZZ\bigoplus_{j=1}^{2n-1} H^{j}(M,\ZZ)  \\
H^{odd}(Z , \ZZ) &\cong&  \ZZ \bigoplus_{j=1}^{2n-1} H^{j}(M,\ZZ).
\end{eqnarray}

Similarly for $S^7$-bundles $Z $ over $M^8$ with Euler class $k$, repeating the calculations above, we see that for $k\ne 0$,

 \begin{eqnarray}
 H^{even}(Z , \ZZ) &\cong& \ZZ \oplus \ZZ_{k} \bigoplus_{j=1}^{7} H^{j}(M,\ZZ)\\
H^{odd}(Z , \ZZ) &\cong& \ZZ \bigoplus_{j=1}^{7} H^{j}(M,\ZZ).
\end{eqnarray}
When $k=0$ we get that 
 \begin{eqnarray}
 H^{even}(Z , \ZZ) &\cong& \ZZ \oplus \ZZ \bigoplus_{j=1}^{7} H^{j}(M,\ZZ)\\
H^{odd}(Z , \ZZ) &\cong& \ZZ \bigoplus_{j=1}^{7} H^{j}(M,\ZZ).
\end{eqnarray}

\subsection{Isomorphism of higher twisted cohomologies}\label{twisted_cohom}

We  use the calculations above to now compute the higher twisted cohomology groups $H^{even/odd}_H(Z, \ZZ) $, computed via the secondary operation 
$\cup H $ acting on $H^{\bullet}(Z , \ZZ) $. Note that $(\cup H)^2=0$ for degree reasons.

$\cup H  : H^0(Z , \ZZ) =\ZZ \longrightarrow H^{4n-1}(Z , \ZZ)=\ZZ$ is multiplication by $2l$, and since $l\ne0$, the nullspace is zero and the image is $2l\ZZ$. 
It follows that the twisted cohomology groups are 
\begin{eqnarray}
H^{even}_H(Z , \ZZ) &\cong&  \ZZ_{2k}\bigoplus_{j=1}^{2n-1} H^{j}(M,\ZZ)\nonumber\\
H^{odd}_H(Z , \ZZ) =&\cong&  \ZZ_{2l}\bigoplus_{j=1}^{2n-1} H^{j}(M,\ZZ).
\end{eqnarray}

Noting that  $(Z , H )$ and $(\widehat Z , \widehat H )$ are {\em spherical T-dual pairs} over $M$,
where $e(\widehat Z)  = 2l $ and $\widehat H = 2k$, we see that 
\begin{eqnarray}
H^{even}_{\widehat H}(\widehat Z , \ZZ) &\cong&  \ZZ_{2l}\bigoplus_{j=1}^{2n-1} H^{j}(M,\ZZ)\nonumber\\
H^{odd}_{\widehat H}(\widehat Z , \ZZ) &\cong&  \ZZ_{2k}\bigoplus_{j=1}^{2n-1} H^{j}(M,\ZZ).
\end{eqnarray}


We conclude that the {\em spherical T-duality isomorphisms} hold:
 \begin{eqnarray}
H^{even}_H(Z , \ZZ) &\cong&  H^{odd}_{\widehat H}(\widehat Z , \ZZ))\nonumber\\
H^{odd}_H(Z , \ZZ) &\cong&  H^{even}_{\widehat H}(\widehat Z , \ZZ).
\end{eqnarray}
The same is true when either $k=0$ or $l=0$.


For $S^7$-bundles $Z $ over $M=M^8$ with Euler class $k$ and degree 15-flux $H $ equal to $l$, we can calculate analogous to the above that
\\
$H^{even}_H(Z , \ZZ) =  \ZZ_{k}\bigoplus_{j=1}^{7} H^{j}(M,\ZZ)$ and $H^{odd}_H(Z , \ZZ) = \ZZ_{l}\bigoplus_{j=1}^{7} H^{j}(M,\ZZ)$ giving the {\em spherical T-duality isomorphisms}
 \begin{eqnarray}
H^{even}_H(Z , \ZZ) &\cong&  H^{odd}_{\widehat H}(\widehat Z , \ZZ))\nonumber\\
H^{odd}_H(Z , H , \ZZ) &\cong&  H^{even}_{\widehat H}(\widehat Z , \ZZ).
\end{eqnarray}

\subsection{K-theory of $Z$ and $\widehat{Z}$ when the base $M$ is torsion free}

In this subsection, we will prove that the even K-theory of $Z$ is isomorphic to
the even cohomology of $Z$, and that the odd K-theory of $Z$ is isomorphic to
the odd cohomology of $Z$, under the added assumption that the base manifold $M$ is torsion free.
Similar results hold for the $T$-dual 
$\widehat{Z}$.

As we will be applying the Gysin sequence, which depends on the $\ZZ$ grading of the $K$-theory groups, we will have to consider the $\ZZ$ graded $K$-groups
$K^{j}(\cdot)$ for $j \in \ZZ$. By Bott periodicity we know that these groups are 2 periodic. Therefore, we will let $K^{even}(\cdot)$ denote the even $K$-groups and $K^{odd}(\cdot)$ the odd $K$-groups. While this notation is slightly different to the one employed in section \ref{htk_ch}, it is employed so as to not cause confusion with the $K^0$ and $K^1$ terms that arise in the Gysin sequence.
In the case of cohomology, we follow the notation used in the previous subsection. That is, 
$H^{even}(\cdot;\ZZ)$ will denote the direct sum of all even cohomology groups, and $H^{odd}(\cdot;\ZZ)$ the direct sum of all the odd cohomology groups.

Using the Gysin sequence in $K$-theory, we obtain

\begin{center}
\begin{tikzcd} 
\cdots \arrow[r] & K^{j-2n}(M) \arrow[r, "\phi"] & K^j(M) 
\arrow[r, "\pi^*"] & K^j(Z) \arrow[r, "\pi_*"] \arrow[d, phantom, ""{coordinate, name=Z}] & K^{j-(2n-1)}(M) \arrow[dllll, "\phi", rounded corners, to path={ -- ([xshift=2ex]\tikztostart.east) |- (Z) [near end]\tikztonodes -| ([xshift=-2ex]\tikztotarget.west) -- (\tikztotarget)}] \\ 
K^{j+1}(M) \arrow[r, "\pi^*"] & K^{j+1}(Z) \arrow[r, "\pi_*"] 
& K^{j+(2n-1)}(M) \arrow[r] & \cdots\cdots
\end{tikzcd}
\end{center}

The map $\phi$ is the $K$-theoretic version of cupping with the Euler class, and is defined on the even $K$-groups by $\phi(x) = rk(x)k\cdot Th$, with $Th$ being the
Thom class and $rk$ being the rank function. 
On the odd $K$-groups 
$\phi \equiv 0$.

As we are assuming $M$ is torsion free, it follows that
$K^{even}(M) \cong H^{even}(M;\ZZ)$. This follows from a simple calculation using the Atiyah-Hirzebruch spectral sequence, and the fact that all differentials in this spectral sequence are torsion operators, see \cite{AD}. Cupping with the Euler class is a map from $H^0(M;\ZZ) \cong \ZZ$ to $H^{2n}(M;\ZZ) \cong \ZZ$, given by multiplication by $2k$. Under the isomorphism 
$K^{even}(M) \cong H^{even}(M;\ZZ)$, the map $\phi$ has image 
\[2k\ZZ \subseteq \ZZ \cong H^{2n}(M;\ZZ) \subseteq H^{even}(M;\ZZ).\]

We further observe that when $j$ is even, the kernel of 
$\phi$ is the reduced $K$-group $\widetilde{K}^{j-2n}(M)$. The fact that 
$K^{even}(M) \cong H^{even}(M;\ZZ)$ then gives 
$\ker\phi = \widetilde{K}^{j-2n}(M) \cong \bigoplus_{j=1}^nH^{2j}(M;\ZZ)$.

We start by analysing the even $K$-groups of $Z$.
When $j = 0$, we can extract out the following sequence using the fact that
$\phi \equiv 0$.

\begin{center}
\begin{tikzcd} 
0 \arrow[r] & K^{-1}(M) \arrow[r, "\pi^*"] & K^{-1}(Z) \arrow[r, "\pi_*"] 
& K^{-2n}(M) 
\arrow[r, "\phi"] & K^0(M) \arrow[r, "\pi^*"] \arrow[d, phantom, ""{coordinate, name=Z}] & K^0(Z) \arrow[dllll, "\pi_*", rounded corners, to path={ -- ([xshift=2ex]\tikztostart.east) |- (Z) [near end]\tikztonodes -| ([xshift=-2ex]\tikztotarget.west) -- (\tikztotarget)}] \\ 
{} & K^{-(2n-1)}(M) \arrow[r, "\phi"] & 0 
& {}  & {}
\end{tikzcd}
\end{center}

From this sequence we can extract two short exact sequences. The first exact sequence being

\begin{center}
\begin{tikzcd} 
0 \arrow[r] & \ker\pi^* = \im\phi \arrow[r] & K^{0}(M) 
\arrow[r, "\pi^*"] & \im\pi^* \arrow[r, "\phi"] & 0
\end{tikzcd}
\end{center}

which gives $\im\pi^* \cong \frac{K^0(M)}{\im\phi} \cong \frac{K^0(M)}{2k\ZZ}$.

As $K^0(M) \cong H^{even}(M; \ZZ)$, and $\im\phi$ can be identified with
$2k\ZZ \subseteq H^{2n}(M;\ZZ)$, we see that 
\[\im\pi^* \cong \bigoplus_{j = 0}^{n-1}H^{2j}(M;\ZZ) \oplus \ZZ_{2k}.\]

The second exact sequence we obtain is
\begin{center}
\begin{tikzcd} 
0 \arrow[r] & \ker\pi_* = \im\pi^* \arrow[r] & K^{0}(Z) 
\arrow[r, "\pi_*"] & K^{-(2n-1)}(M) \arrow[r, "\phi"] & 0
\end{tikzcd}
\end{center}

The torsion free condition on $M$ implies $K^{2n-1}(M) \cong H^{odd}(M;\ZZ)$, which is a torsion free abelian group. Hence, the above short exact sequence splits and, on applying Bott periodicity, we obtain 
\[K^{even}(Z) \cong 
\bigoplus_{j = 0}^{n-1}H^{2j}(M;\ZZ) \oplus \ZZ_{2k} \oplus H^{odd}(M;\ZZ).\]

The odd $K$-groups of $Z$ can be analysed in a similar manner. Letting 
$j = 1$ in the $K$-theoretic Gysin sequence, and using the fact that
$\phi \equiv 0$ on the odd $K$-groups, gives the following sequence

\begin{center}
\begin{tikzcd} 
0 \arrow[r] & K^{1}(M) \arrow[r, "\pi^*"] & K^{1}(Z) \arrow[r, "\pi_*"] 
& K^{2-2n}(M) 
\arrow[r, "\phi"] & K^2(M) \arrow[r, "\pi^*"] \arrow[d, phantom, ""{coordinate, name=Z}] & K^2(Z) \arrow[dllll, "\pi_*", rounded corners, to path={ -- ([xshift=2ex]\tikztostart.east) |- (Z) [near end]\tikztonodes -| ([xshift=-2ex]\tikztotarget.west) -- (\tikztotarget)}] \\ 
{} & K^{3-2n}(M) \arrow[r, "\phi"] & 0 
& {}  & {}
\end{tikzcd}
\end{center}

From this exact sequence we can extract the short exact sequence

\begin{center}
\begin{tikzcd} 
0 \arrow[r] & K^1(M) \arrow[r, "\pi^*"] & K^{1}(Z) 
\arrow[r, "\pi_*"] & \im\pi_* = \ker\phi \arrow[r] & 0
\end{tikzcd}
\end{center}

As $\ker\phi \cong \bigoplus_{j=1}^nH^{2j}(M;\ZZ)$ is a free abelian group, the 
above short exact sequence splits. Applying Bott periodicity, we 
thus obtain 
\[K^{odd}(Z) \cong K^1(M) \oplus  \bigoplus_{j=1}^nH^{2j}(M;\ZZ) 
\cong H^{odd}(M;\ZZ) \oplus \bigoplus_{j=1}^nH^{2j}(M;\ZZ).\] 
These observations allow us to prove the following theorem.

\begin{theorem}\label{k-theory_Z}
$K^{even/odd}(Z) \cong H^{even/odd}(Z;\ZZ)$.
\end{theorem}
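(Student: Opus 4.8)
The plan is to deduce the statement by comparing, separately for the even and the odd parts, the explicit descriptions of $K^{even}(Z)$ and $K^{odd}(Z)$ obtained above with the descriptions of $H^{even}(Z;\ZZ)$ and $H^{odd}(Z;\ZZ)$ coming from the Gysin sequence of \S\ref{gysin_cohom}. The structural point is that, since $M$ is torsion-free, every $H^j(M;\ZZ)$ is free abelian; hence $K^{even}(M)\cong H^{even}(M;\ZZ)$ and $K^{odd}(M)\cong H^{odd}(M;\ZZ)$ are free, all the short exact sequences extracted above from the $K$-theoretic Gysin sequence split, and both $K^{even}(Z)$ and $H^{even}(Z;\ZZ)$ are finitely generated abelian groups whose torsion subgroup is the single copy of $\ZZ_{2k}$ produced by the Euler class (replaced by a free $\ZZ$ when $k=0$), while $K^{odd}(Z)$ and $H^{odd}(Z;\ZZ)$ are free. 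So in each parity it is enough to match the free ranks, for which I write $b_j$ for the rank of $H^j(M;\ZZ)$.

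On the $K$-theory side, the formula $K^{even}(Z)\cong\bigoplus_{j=0}^{n-1}H^{2j}(M;\ZZ)\oplus\ZZ_{2k}\oplus H^{odd}(M;\ZZ)$ shows the free part is $H^0(M)\oplus H^1(M)\oplus\cdots\oplus H^{2n-1}(M)$, of rank $\sum_{j=0}^{2n-1}b_j$, while $K^{odd}(Z)\cong H^{odd}(M;\ZZ)\oplus\bigoplus_{j=1}^{n}H^{2j}(M;\ZZ)$ has free part $H^1(M)\oplus H^2(M)\oplus\cdots\oplus H^{2n}(M)$, of rank $\sum_{j=1}^{2n}b_j$; since $b_0=b_{2n}=1$, both equal $1+\sum_{j=1}^{2n-1}b_j$. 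On the cohomology side the Gysin sequence gives $H^j(Z;\ZZ)\cong H^j(M;\ZZ)$ for $0\le j\le 2n-1$, a split extension $0\to\ZZ_{2k}\to H^{2n}(Z;\ZZ)\to H^1(M;\ZZ)\to 0$ in the middle degree, and $H^j(Z;\ZZ)\cong H^{j-2n+1}(M;\ZZ)$ for $2n+1\le j\le 4n-1$ (equivalently, one may simply invoke the summary formulas of \S\ref{gysin_cohom}); adding up the even-degree pieces and then the odd-degree pieces, and again using $b_0=b_{2n}=1$, yields free ranks $1+\sum_{j=1}^{2n-1}b_j$ in both parities, with torsion $\ZZ_{2k}$ present only in the even part. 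Comparing the two sides gives $K^{even}(Z)\cong H^{even}(Z;\ZZ)$ and $K^{odd}(Z)\cong H^{odd}(Z;\ZZ)$, and the case of $S^7$-bundles over $M^8$ is identical, with $2k$ replaced by the Euler class $k$.

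I expect the only point needing care to be the middle degree $2n$: one must not record $H^{2n}(Z;\ZZ)=\ZZ_{2k}$ outright but must keep the free summand $H^1(M;\ZZ)$ coming from $\pi_*$, and then notice that the Betti-number tallies on the two sides still coincide; this works out because $b_0=b_{2n}=1$ and, if one wants to reconcile the two presentations of $H^{*}(Z;\ZZ)$ directly, because of Poincaré duality $b_j=b_{2n-j}$ on $M$. Everything else is routine bookkeeping with finitely generated abelian groups.
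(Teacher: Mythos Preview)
Your proof is correct and follows essentially the same approach as the paper: both arguments compare the explicit description of $K^{even/odd}(Z)$ obtained just above via the $K$-theoretic Gysin sequence with the description of $H^{even/odd}(Z;\ZZ)$ from the cohomological Gysin sequence in \S\ref{gysin_cohom}. You are in fact more careful than the paper at one point---you correctly track the $H^1(M;\ZZ)$ summand in $H^{2n}(Z;\ZZ)$, which the paper's degree-by-degree list suppresses but which is needed for the summed formula $H^{even}(Z;\ZZ)\cong\ZZ\oplus\ZZ_{2k}\oplus\bigoplus_{j=1}^{2n-1}H^j(M;\ZZ)$ to come out right.
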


\begin{proof}
The above discussion shows $K^{even}(Z) \cong 
\bigoplus_{j = 0}^{n-1}H^{2j}(M;\ZZ) \oplus Z/ 2k\ZZ \oplus H^{odd}(M;\ZZ)
\cong H^{even}(Z;\ZZ)$. This last isomorphism coming from the Gysin sequence computation in section \ref{gysin_cohom}.

Similarly, using the cohomology Gysin sequence computation for $H^{odd}(Z;\ZZ)$, carried out in section \ref{gysin_cohom}, and the above computation for 
$K^{odd}$, proves the odd case.
\end{proof}
A similar result holds for the $T$-dual $\widehat{Z}$. As the proof is exactly analogous to what was done above for $Z$, we simply state the final theorem.

\begin{theorem}
$K^{even/odd}(\widehat{Z}) \cong H^{even/odd}(\widehat{Z};\ZZ)$.
\end{theorem}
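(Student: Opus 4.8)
The plan is to repeat, essentially verbatim, the argument proving Theorem~\ref{k-theory_Z}, with the Euler class of $Z$ replaced throughout by that of $\widehat Z$. Recall that $\widehat Z$ is an $S^{2n-1}$-bundle over the torsion-free base $M$ whose Euler class is $e(\widehat Z) = \pi_*(H) = 2l \in H^{2n}(M;\ZZ)$ (respectively the integer $l$ in the $n = 4$ case). First I would write down the $K$-theoretic Gysin sequence for $\widehat\pi : \widehat Z \to M$ exactly as displayed above, with the parameter $2k$ replaced by $2l$; here the relevant map $\widehat\phi$ is the $K$-theoretic cup product with the Euler class, given on the even $K$-groups by $\widehat\phi(x) = rk(x)\,2l\cdot Th$ with $Th$ the Thom class, and vanishing identically on the odd $K$-groups.

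Since $M$ is torsion free, the Atiyah--Hirzebruch spectral sequence (all of whose differentials are torsion operators, cf.\ \cite{AD}) gives $K^{even}(M) \cong H^{even}(M;\ZZ)$ and $K^{odd}(M) \cong H^{odd}(M;\ZZ)$, the latter being a free abelian group. Under the first identification, $\im\widehat\phi$ is $2l\ZZ \subseteq \ZZ \cong H^{2n}(M;\ZZ) \subseteq H^{even}(M;\ZZ)$, while on the even $K$-groups $\ker\widehat\phi$ is the reduced group $\widetilde K^{even}(M) \cong \bigoplus_{j=1}^{n} H^{2j}(M;\ZZ)$.

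Next I would extract the two short exact sequences exactly as in the proof of Theorem~\ref{k-theory_Z}. Taking $j = 0$ and using $\widehat\phi \equiv 0$ on odd groups, the first sequence gives $\im\widehat\pi^* \cong K^0(M)/2l\ZZ \cong \bigoplus_{j=0}^{n-1} H^{2j}(M;\ZZ) \oplus \ZZ_{2l}$, and then
\[
0 \to \im\widehat\pi^* \to K^0(\widehat Z) \to K^{-(2n-1)}(M) \to 0
\]
splits because $K^{2n-1}(M) \cong H^{odd}(M;\ZZ)$ is free abelian, yielding
\[
K^{even}(\widehat Z) \cong \bigoplus_{j=0}^{n-1} H^{2j}(M;\ZZ) \oplus \ZZ_{2l} \oplus H^{odd}(M;\ZZ).
\]
Taking $j = 1$, the sequence $0 \to K^1(M) \to K^1(\widehat Z) \to \ker\widehat\phi \to 0$ splits since $\ker\widehat\phi \cong \bigoplus_{j=1}^{n} H^{2j}(M;\ZZ)$ is free, so that
\[
K^{odd}(\widehat Z) \cong H^{odd}(M;\ZZ) \oplus \bigoplus_{j=1}^{n} H^{2j}(M;\ZZ).
\]

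Finally I would compare these with the cohomology of $\widehat Z$ coming from the Gysin sequence of Section~\ref{gysin_cohom} applied to $\widehat Z$ (Euler class $2l$), namely $H^{even}(\widehat Z;\ZZ) \cong \ZZ \oplus \ZZ_{2l} \oplus \bigoplus_{j=1}^{2n-1} H^{j}(M;\ZZ)$ and $H^{odd}(\widehat Z;\ZZ) \cong \ZZ \oplus \bigoplus_{j=1}^{2n-1} H^{j}(M;\ZZ)$; a direct inspection shows these coincide with the $K$-theory groups just computed, which proves $K^{even/odd}(\widehat Z) \cong H^{even/odd}(\widehat Z;\ZZ)$. The only delicate point, and hence the main (if mild) obstacle, is the splitting of the two short exact sequences, which is precisely where the torsion-freeness of $M$ enters; the $n = 4$ case is identical with $2l$ replaced by $l$ throughout.
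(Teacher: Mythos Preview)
Your proposal is correct and follows exactly the approach indicated in the paper: the paper does not write out a separate proof for $\widehat Z$, stating only that ``the proof is exactly analogous to what was done above for $Z$'', and your write-up is precisely that analogous argument with the Euler class $2k$ replaced by $2l$.
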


\subsection{Isomorphism of higher Twisted $K$-theories when the base $M$ is torsion free}

In this subsection we compute the higher twisted K-theory of $Z$ and $\widehat{Z}$, and show that they are isomorphic after a degree shift. This should be compared with similar results obtained for the twisted cohomologies in section \ref{twisted_cohom}. As in the previous subsection, we will be assuming that $M$ is torsion free.
We remind the reader that $Z$ has Euler class $e(Z) \in H^{2n}(M;\ZZ)$ corresponding to the integer $2k$.
We also fixed a top degree cohomology class $H \in H^{4n-1}(Z;\ZZ)$, that corresponds to the integer $2l$. The spherical $T$-dual $\widehat{Z}$ has Euler class $e(\widehat{Z}) = 2l$. Furthermore, we have fixed $\widehat{H} \in 
H^{4n-1}(\widehat{Z};\ZZ)$ given by $\widehat{H} = 2k$. We will be computing the twisted K-theory of $Z$, twisted by the class $H$ and denoted by 
$K^{\textbf{0}}_H$ and $K^{\textbf{1}}_H$, as well as the twisted K-theory of $\widehat{Z}$, twisted by the class
$\widehat{H}$. It is these twisted $K$-theory groups that will be shown to be isomorphic after a degree shift.  
We will do this via the Atiyah-Hirzebruch spectral sequence for twisted $K$-theory.

\includegraphics[width=15cm, height=15cm]{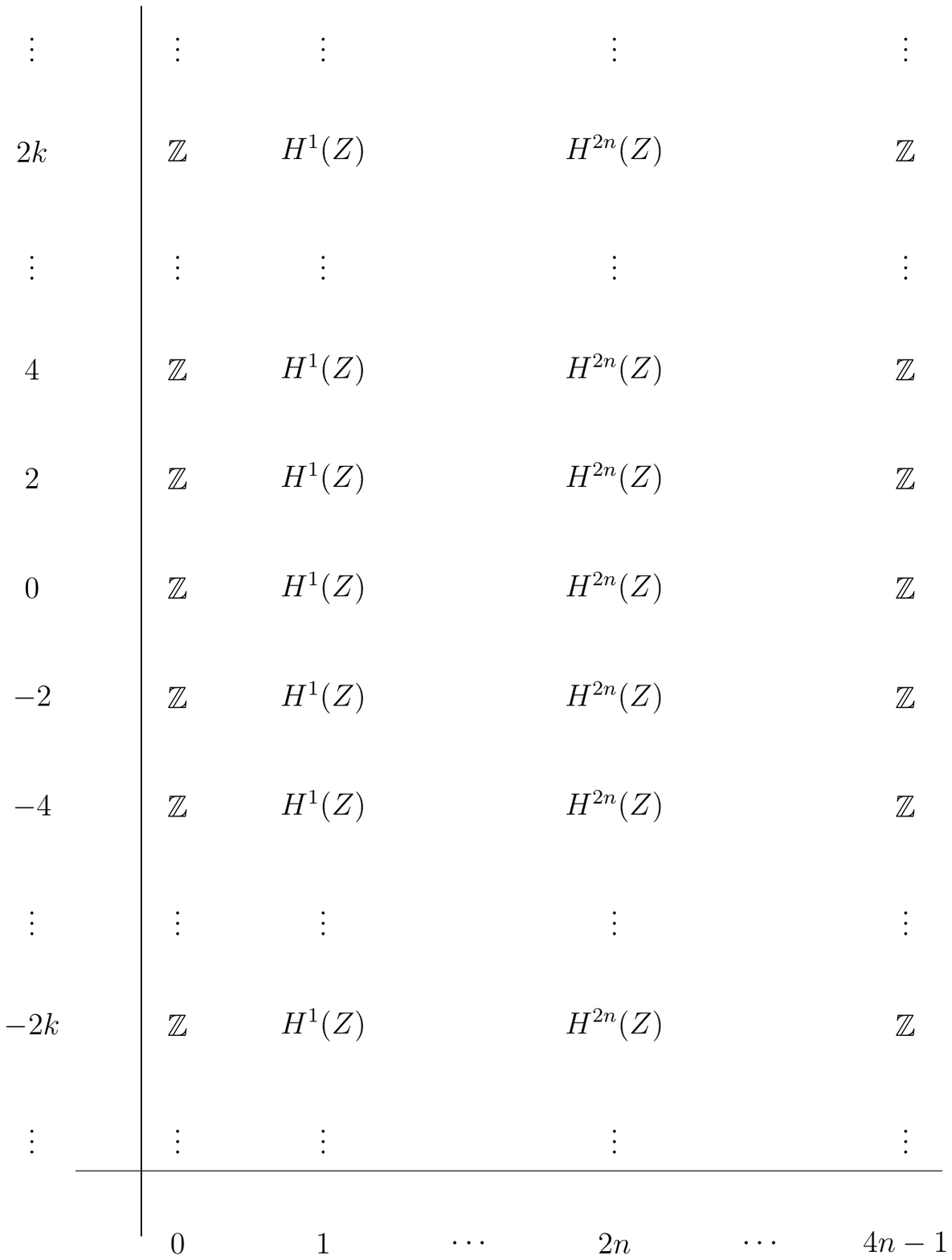}

Therefore, the $E_2$-page of the spectral sequence (as in the figure) takes the following form.
The $E_2$-page of the Atiyah-Hirzebruch spectral sequence for $K^*_H$ is given
by $E_2^{p,q} = H^p(Z; K^{q}(pt)$, where $K$ denotes ordinary complex 
$K$-theory. As $K^{odd}(pt) = 0$ and $K^{even}(pt) = \ZZ$, we see that
$E_2^{p, 2j} = H^p(Z;\ZZ)$ and $E_2^{p,2j+1} = 0$ for all $j \in \ZZ$.


From the spectral sequence we can see that the even differentials all vanish.
In the untwisted case, (i.e. when $H = 0$) it is known that the odd 
differentials are all torsion operators, see \cite{AD}. It follows that the odd 
differentials  $d_j$ for $j < 4n-1$ are all torsion operators, as these are equal to the differentials in the untwisted case.  
As 
$Z$ only has torsion in $H^{2n}(Z;\ZZ)$, we see that the potential 
non-zero differentials are $d_3, d_5, \cdots ,d_{2n-1}$ and $d_{4n-1}$.

\begin{proposition}\label{odd_diff_vanish}
The differentials $d_3, d_5, \cdots ,d_{2n-1}$ are all zero.
\end{proposition}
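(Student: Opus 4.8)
The plan is to analyze the Atiyah-Hirzebruch spectral sequence for $K^*_H(Z)$ in the range of degrees where the differentials $d_3,\dots,d_{2n-1}$ act, and to show that on the relevant $E_2$-page entries these differentials have torsion-free source and target, hence must vanish. Recall from the discussion preceding the proposition that $E_2^{p,q} = H^p(Z;\ZZ)$ for $q$ even and $0$ for $q$ odd, and that the only torsion in $H^\bullet(Z;\ZZ)$ sits in degree $2n$, namely $H^{2n}(Z;\ZZ)\cong\ZZ_{2k}$ (as computed via the Gysin sequence in section \ref{gysin_cohom}). Since these odd differentials for $j<4n-1$ agree with the differentials in the untwisted Atiyah-Hirzebruch spectral sequence \cite{AD}, and those are torsion operators, any differential $d_j$ with $3\le j\le 2n-1$ can be nonzero only if it hits or emanates from the torsion group $H^{2n}(Z;\ZZ)$.

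So first I would fix $j$ with $3\le j\le 2n-1$ odd and consider the differential $d_j\colon E_j^{p,q}\to E_j^{p+j,q-j+1}$. For this to interact with the torsion group we would need either $p=2n$ or $p+j=2n$. I would dispose of the two cases separately. If $p+j=2n$, then the target is a subquotient of $H^{2n}(Z;\ZZ)=\ZZ_{2k}$, while the source is a subquotient of $H^{2n-j}(Z;\ZZ)$; since $3\le j\le 2n-1$ we have $1\le 2n-j\le 2n-3$, so by the Gysin computation $H^{2n-j}(Z;\ZZ)$ is torsion-free (it equals $H^{2n-j}(M;\ZZ)$ for $2n-j\le 2n-1$, and $M$ is torsion free by hypothesis). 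A homomorphism from a torsion-free group to a torsion group need not vanish in general, so here I would use that the untwisted differential is a torsion operator: being a torsion operator, $d_j$ restricted to the torsion-free subquotient $E_j^{2n-j,q}$ must be zero. Symmetrically, if $p=2n$, the source $E_j^{2n,q}$ is a subquotient of $\ZZ_{2k}$ and the target $E_j^{2n+j,q-j+1}$ is a subquotient of $H^{2n+j}(Z;\ZZ)$; for $3\le j\le 2n-1$ we have $2n+3\le 2n+j\le 4n-1$, and by the Gysin computation these cohomology groups $H^{2n+j}(Z;\ZZ)=H^{j+1-2n}(M;\ZZ)$ (for $2n<2n+j<4n-2$), or $\ZZ$ in top degree, which are all torsion-free. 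A homomorphism from a finite cyclic group to a torsion-free group is automatically zero, so $d_j=0$ in this case as well.

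The main obstacle is the first case: ruling out a nonzero differential \emph{into} the torsion group $\ZZ_{2k}$. Unlike the second case, this does not follow from pure algebra and genuinely requires the input that the untwisted odd differentials are torsion operators (a differential that is a torsion operator kills torsion-free classes), together with the observation already made in the excerpt that $d_j$ for $j<4n-1$ equals the corresponding untwisted differential because the twist $H$ lives in top degree $4n-1$. I would therefore be careful to invoke \cite{AD} precisely at this point, and also to note that after killing all of $d_3,\dots,d_{2n-1}$ the spectral sequence has not yet changed the entries in degree $2n$, so the inductive hypothesis ``$d_i=0$ for $i<j$'' keeps $E_j^{p,q}=H^p(Z;\ZZ)$, which is what lets me keep reading off torsion-freeness from the Gysin computation at each stage. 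Assembling these observations gives $d_j=0$ for all odd $j$ with $3\le j\le 2n-1$, which is the claim.
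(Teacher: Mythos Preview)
Your argument has a genuine gap in the case where the differential maps \emph{into} the torsion group $H^{2n}(Z;\ZZ)\cong\ZZ_{2k}$. You write: ``being a torsion operator, $d_j$ restricted to the torsion-free subquotient $E_j^{2n-j,q}$ must be zero.'' This is false. That $d_j$ is a torsion operator means only that $N\cdot d_j=0$ for some integer $N$, i.e.\ every image $d_j(x)$ is $N$-torsion. When the target is torsion-free this forces $d_j(x)=0$, but when the target is already torsion (as $\ZZ_{2k}$ is) the constraint is vacuous. For instance, the reduction map $\ZZ\to\ZZ_{2k}$ is a torsion operator in this sense and is certainly nonzero. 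So nothing you have said rules out a nonzero differential $d_j:H^{2n-j}(Z;\ZZ)\to H^{2n}(Z;\ZZ)$, and this is exactly the case you flagged as the ``main obstacle''.

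The paper closes this gap by a different mechanism: naturality of the Atiyah--Hirzebruch spectral sequence under the projection $\pi:Z\to M$. Since $\pi^*:H^j(M;\ZZ)\to H^j(Z;\ZZ)$ is an isomorphism for $j\le 2n-1$ (by the Gysin computation), every class $x\in H^{2n-j}(Z;\ZZ)$ is $\pi^*y$ for some $y\in H^{2n-j}(M;\ZZ)$. Then $d_j(x)=d_j(\pi^*y)=\pi^*(d_j y)$, and now $d_j y$ lives in $H^{2n}(M;\ZZ)\cong\ZZ$, which \emph{is} torsion-free, so the torsion-operator property forces $d_j y=0$. This is run inductively so that $\pi^*$ remains the induced map on each successive page. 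Your other case (differentials \emph{out of} $\ZZ_{2k}$ into a torsion-free target) and the generic case (both ends torsion-free) are fine as you wrote them, but without the comparison to $M$ the argument for differentials landing in degree $2n$ does not go through.
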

\begin{proof}
The projection $\pi : Z \rightarrow M$ induces a map from the Atiyah-Hirzebruch 
spectral sequence of $K^*(M)$ to the Atiyah-Hirzebruch spectral sequence of 
$K^*(P)$.

As the $E_3$-page of the Atiyah-Hirzebruch spectral sequence is the $E_2$-page, it follows that the induced map on the $E_3$-page, induced by $\pi$, is $\pi^*$ and this commutes with $d_3 : H^{2n-3}(Z;\ZZ) \rightarrow H^{2n}(Z;\ZZ)$. We have already seen in section \ref{gysin_cohom} that 
$\pi^* : H^{j}(M;\ZZ) \rightarrow H^{j}(Z;\ZZ)$ is an isomorphism for $j < 2n$. If we take $x \in H^{2n-3}(Z;\ZZ)$ let $y \in H^{2n-3}(M;\ZZ)$ satisfy 
$\pi^*(y) = x$. We then see that $d_3(x)= d_3\circ\pi^*(y) = 
\pi^*\circ d_3(y) = 0$. This latter equality following from the fact that
$H^{2n}(M;\ZZ) \cong \ZZ$ and $d_3$ being a torsion operator. Thus we have proved that $d_3 \equiv 0$.

The general case proceeds by induction. Assume that the differentials 
$d_{2j - 1} \equiv 0$ for all $2 \leq j \leq i-1 < n$. We will prove that
$d_{2i-1} \equiv 0$.

The induction hypothesis tells us that the induced map on the $E_{2i-1}$-page, 
induced by $\pi$, is still $\pi^*$ and that this map commutes with
$d_{2i-1}$. Given $x \in H^{2n-(2i-1)}(Z;\ZZ)$, and using the fact that
$\pi^* : H^{2n-(2i-1)}(M;\ZZ) \rightarrow H^{2n-(2i-1)}(Z;\ZZ)$ is an 
isomorphism, let $y \in H^{2n-(2i-1)}(M;\ZZ)$ such that $\pi^*(y) = x$.
We then have $d_{2i-1}(x) = d_{2i-1}\circ\pi^*(y) = \pi^*\circ d_{2i-1}(y) = 0$, where again we have used the fact that $d_{2i-1}$ is a torsion operator and hence must vanish on $H^{2n-(2i-1)}(M;\ZZ)$.

We have proved that the differentials $d_3, d_5, \cdots ,d_{2n-1}$ are all zero in the Atiyah-Hirzebruch spectral sequence for the $K$-theory groups $K^*(Z)$. However, these differentials are also the differentials in the twisted Atiyah-Hirzebruch spectral sequence for $K^*_H(Z)$ in that range (the difference occurs for the differentials after $d_{4n-3}$).
Thus the theorem has been proved.

\end{proof}

\begin{proposition}\label{diff_cup}
The differential $d_{4n-1} = \cup H = \cup 2l$.
\end{proposition}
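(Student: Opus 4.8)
The plan is to reduce the asserted identity to a single entry of the differential and then pin down its normalisation. On the $E_{4n-1}$-page the differential has the form $d_{4n-1}\colon H^{p}(Z;\ZZ)\to H^{p+4n-1}(Z;\ZZ)$ on each even row (the odd rows vanish, and by Bott periodicity it suffices to treat the $q=0$ row). Since $Z$ is an $S^{2n-1}$-bundle over the $2n$-manifold $M$ it has dimension $4n-1$, so $H^{p+4n-1}(Z;\ZZ)=0$ for every $p\ge1$; as $\cup H$ also lands in degree $>\dim Z$ for $p\ge1$, the identity is automatic there. Thus the entire content is the single map $d_{4n-1}\colon H^{0}(Z;\ZZ)\to H^{4n-1}(Z;\ZZ)$, both groups being infinite cyclic, and one must show $d_{4n-1}(1)=[H]$, i.e.\ the integer $2l$. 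Because the target is torsion free, it suffices to verify this after $\otimes\,\QQ$ (equivalently $\otimes\,\RR$).

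Next I would isolate the twist contribution. By the argument of Proposition~\ref{odd_diff_vanish} the differentials $d_{r}$ for $r\le 4n-3$ in the twisted spectral sequence agree with the untwisted ones, and the untwisted spectral sequence for $K^{\ast}(Z)$ collapses at $E_{2}$, so $E_{2}=E_{4n-1}$. The untwisted $d_{4n-1}$ is a torsion cohomology operation of degree $4n-1$; since the only torsion of $H^{\ast}(Z;\ZZ)$ sits in degree $2n$ and $d_{4n-1}$ raises cohomological degree to $p+4n-1>2n$ for all $p\ge0$ (as $n\ge1$), it vanishes identically on $E_{2}(Z)$. Hence $d_{4n-1}$ in the twisted spectral sequence is purely the twist correction, which is a natural operation of the twist. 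Picking a smooth cohomotopy representative $\lambda\colon Z\to S^{4n-1}$ with $\lambda^{\ast}u_{4n-1}=[H]$ (possible since $\dim Z=4n-1$, so $[Z,S^{4n-1}]\cong H^{4n-1}(Z;\ZZ)$), naturality of the twisted Atiyah--Hirzebruch spectral sequence reduces the problem to the universal twist on $S^{4n-1}$, where it remains to prove $d_{4n-1}(1)=u_{4n-1}$.

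For this universal case I would proceed in one of two ways. Either compute $K^{\ast}_{u_{4n-1}}(S^{4n-1})$ directly and observe that it vanishes --- this forces $d_{4n-1}\colon\ZZ\to\ZZ$ to be an isomorphism, with the generator fixed by the orientation convention defining $u_{4n-1}$, equivalently by the clutching computation of Proposition~\ref{sss_differential_formula}, which is exactly the Atiyah--Segal argument of \cite[p.~7]{AS1} carried out in degree $4n-1$ instead of $3$. Or, more cheaply over $\RR$, invoke the Chern character of Section~\ref{htk_ch}: it induces a morphism from the twisted $K$-theory spectral sequence to the Atiyah--Hirzebruch spectral sequence of the twisted de Rham complex $(\Omega^{\ast}(Z),\,d-H\wedge)$, and by the real isomorphism theorem this morphism is an isomorphism from the $E_{2}$-page onwards; in the twisted de Rham spectral sequence the first twist-sensitive differential is manifestly $\cup[H]$ because $H\wedge$ raises form degree by $4n-1$, so transporting back gives $d_{4n-1}(1)\otimes\RR=[H]$, hence $d_{4n-1}(1)=[H]=2l$ by torsion freeness. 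I expect the genuine obstacle to be precisely this last point --- showing that the twist correction is \emph{exactly} $\cup H$ and not $\cup(cH)$ for some integer $c$ with $|c|\ne1$, together with keeping track of the sign relating $[H]$ to its cohomotopy representative $\lambda$; everything else is bookkeeping with dimension and torsion.
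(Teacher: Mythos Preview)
Your reduction is precisely the paper's: represent $H$ by a cohomotopy class $\widetilde H\colon Z\to S^{4n-1}$ (the paper phrases this as cellular approximation into the $(4n-1)$-skeleton of $K(\ZZ,4n-1)$, which is the same thing since $\dim Z=4n-1$) and invoke naturality of the twisted Atiyah--Hirzebruch spectral sequence to reduce to the generator twist on $S^{4n-1}$. For that universal case the paper simply cites the Mayer--Vietoris computation of Bunke--Schick \cite{BS}, whereas you sketch two alternatives. Your route (a)---compute $K^{\ast}_{u_{4n-1}}(S^{4n-1})=0$ directly, forcing $d_{4n-1}\colon\ZZ\to\ZZ$ to be an isomorphism up to the sign you correctly flag---is in effect what \cite{BS} does, and your instinct that the residual normalisation is the only real content is accurate.

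One caution about your route (b). The assertion that ``in the twisted de Rham spectral sequence the first twist-sensitive differential is manifestly $\cup[H]$'' is transparent for the spectral sequence filtered by \emph{form degree} (this is the version treated in \cite{LLW}), but the Chern-character comparison established later in the paper (Theorem~\ref{chiso}) is with the \emph{skeleta}-filtered spectral sequence of Theorem~\ref{specseqcohom}, and the paper's closing Remark explicitly leaves open the identification of the differentials in these two filtrations. In the present situation there is only a single nontrivial entry, $H^{0}\to H^{4n-1}$, and the two filtrations visibly agree there, so the gap is harmless; but you should say so rather than treat the identification as automatic.
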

\begin{proof}
The cohomology class $H \in H^{4n-1}(Z;\ZZ)$ is represented by a map
$\widetilde{H} : Z \rightarrow K(\ZZ, 4n-1)$, such that 
$\widetilde{H}^*(\eta) = H$, where $\eta$ is a generator of 
$H^{4n-1}(K(\ZZ, 4n-1);\ZZ)$.
As $Z$ has dimension $4n-1$, we can apply the cellular approximation theorem, and assume the image 
of $\widetilde{H}$ is contained in the $4n-1$-skeleton of $K(\ZZ, 4n-1)$. The $4n-1$-skeleton of $K(\ZZ, 4n-1)$ is $S^{4n-1}$, and therefore we have
$\widetilde{H} : Z \rightarrow S^{4n-1}$ with a generator 
$\eta \in H^{4n-1}(S^{4n-1};\ZZ)$ satisfying $\widetilde{H}^*(\eta) = H$.

The map $\widetilde{H}$ induces a map of the associated twisted Atiyah-Hirzebruch spectral sequences of $Z$ and $S^{4n-1}$ respectively. Therefore, it suffices to prove the proposition for $S^{4n-1}$ with twist given by the generator $\eta$. This latter case follows from the Mayer-Vietoris argument in \cite{BS}.

\end{proof}

Propositions \ref{odd_diff_vanish} and \ref{diff_cup} imply that the only
non-zero differential in the Atiyah-Hirzebruch spectral sequence for
$Z$ is $d_{4n-1} : H^0(Z;\ZZ) \cong \ZZ \rightarrow H^{4n-1}(Z;\ZZ) \cong \ZZ$.

\includegraphics[width=17cm, height=18cm]{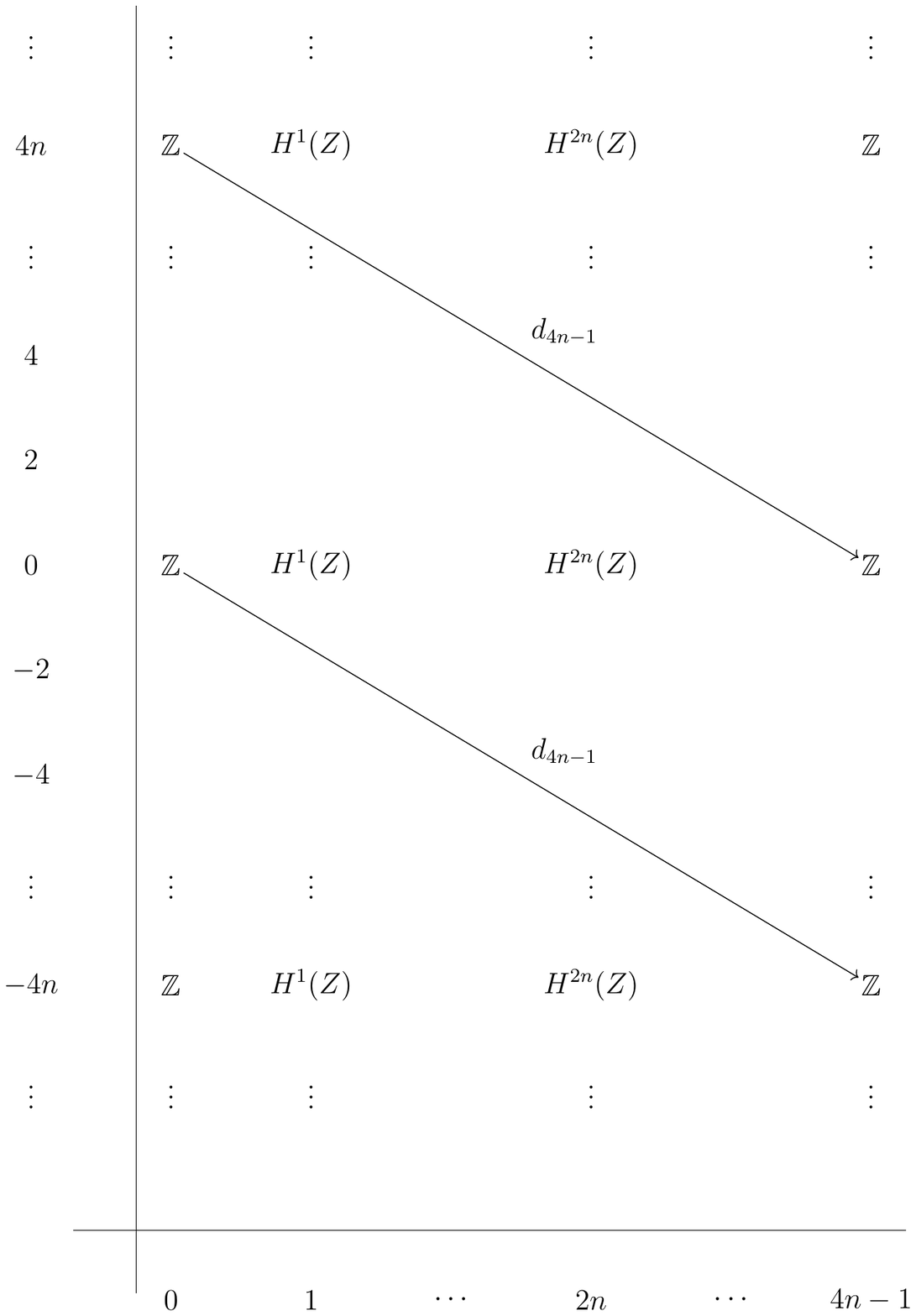}

We thus find that the $E_{\infty}$-page is given in the following way:

\includegraphics[width=17cm, height=18cm]{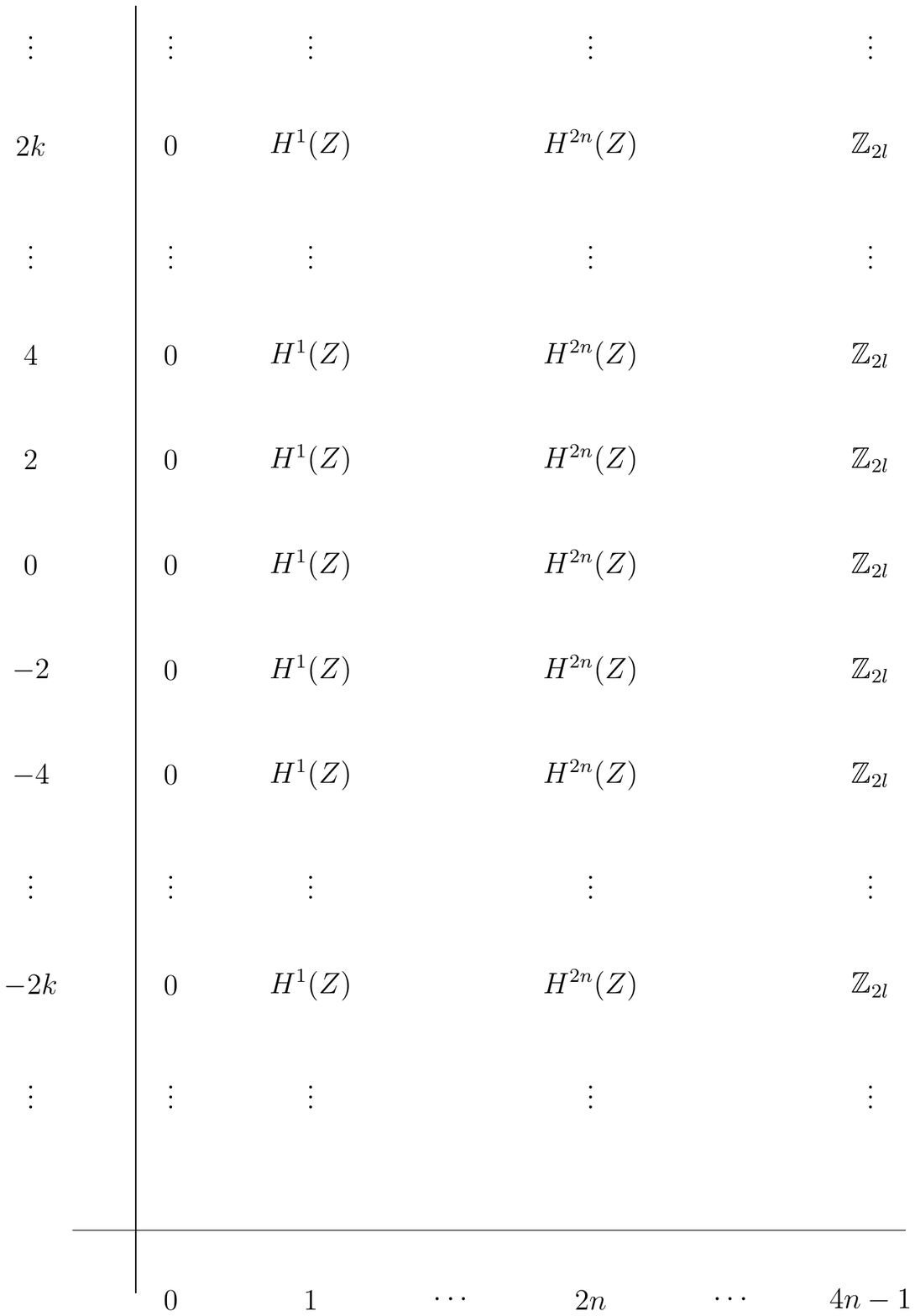}

From the $E_{\infty}$-page of the spectral sequence we can compute the
twisted K-theory groups $K_H^*(Z)$. We do this via the filtration of the twisted K-theory groups. We remind the reader how this proceeds.

Given the sphere bundle $Z$, of dimension $4n-1$, we fix a CW-complex structure on $Z$. Note that the CW-complex structure on $Z$ will have cells in dimensions $k \leq 4n-1$, we will denote the $k^{th}$-skeleton of $Z$ by $Z_k$.
We then have the following filtration of $K^n_H(Z)$, 
$n \in \ZZ$,
\begin{equation}\label{filt_ss}
K^n_H(Z) \supseteq \mathcal{F}^0 \supseteq \mathcal{F}^1 \supseteq \ldots
\supseteq \mathcal{F}^{4n-2} \supseteq \mathcal{F}^{4n-1} = \{0\}
\end{equation}
where $\mathcal{F}^i := \ker(K_H^n(Z) \rightarrow K_H^n(Z_i))$, the map
$K_H^n(Z) \rightarrow K_H^n(Z_i)$ being induced by the inclusion
$Z_i \hookrightarrow Z$.

The convergence of the spectral sequence implies that 
$E_{\infty}^{p,q} = \frac{\mathcal{F}^{p-1}}{\mathcal{F}^p}$, where
$p + q = n$.  We refer the reader to the final section for a more detailed description of this spectral sequence.

The above $E_{\infty}$-page for $K^*_H(Z)$ differs from the untwisted complex K-theory $K^*(Z)$ in only the $0$ and $4n-1$ lines. As the $0$-line consists entirely of zeros, it follows that $K^{\textbf{0}}_H(Z) \cong \widetilde{K}(Z)$, the reduced $K$-theory of $Z$. Applying theorem \ref{k-theory_Z} we obtain the following proposition.

\begin{proposition}\label{twisted_k_even}
$K^{\textbf{0}}_H(Z) \cong \bigoplus_{i = 1, i \neq n}^{2n-1}H^{2i}(Z;\ZZ) 
\oplus \ZZ_{2k} \cong H^{even}_H(Z)$. 
\end{proposition}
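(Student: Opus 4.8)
The plan is to read off the answer directly from the $E_\infty$-page computed above and then match it against the twisted cohomology calculation from Section~\ref{twisted_cohom}. First I would recall that, by Propositions~\ref{odd_diff_vanish} and~\ref{diff_cup}, the only nonzero differential in the Atiyah--Hirzebruch spectral sequence for $K^*_H(Z)$ is $d_{4n-1}\colon H^0(Z;\ZZ)\cong\ZZ\to H^{4n-1}(Z;\ZZ)\cong\ZZ$, which is multiplication by $2l$; since $l\neq 0$ this map is injective, so on the $0$-line $E_\infty^{0,q}=0$ and on the $(4n-1)$-line $E_\infty^{4n-1,q}=\ZZ/2l\ZZ$, while every other entry of the $E_\infty$-page agrees with $E_2^{p,q}=H^p(Z;\ZZ)$ for $q$ even. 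In particular, for the even-degree twisted $K$-theory the contributing columns are $p=2,4,\dots,4n-2$ together with the $(4n-1)$-line contribution $\ZZ/2l\ZZ$ — but note the degree of the flux is $4n-1$, which is \emph{odd}, so the $(4n-1)$-line feeds the odd twisted $K$-group, not the even one; hence $K^{\mathbf 0}_H(Z)$ receives only the even columns $p=2,4,\dots,4n-2$.

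Next I would assemble $K^{\mathbf 0}_H(Z)$ from the filtration~\eqref{filt_ss} and the identification $E_\infty^{p,q}=\mathcal F^{p-1}/\mathcal F^p$. Since the $0$-line of the $E_\infty$-page is entirely zero, the map $K^{\mathbf 0}_H(Z)\to H^0(Z;\ZZ)$ has kernel everything, i.e. $K^{\mathbf 0}_H(Z)\cong\widetilde K^{\mathbf 0}_H(Z)$, matching the reduced $K$-theory pattern. The successive quotients are then $H^2(Z;\ZZ),H^4(Z;\ZZ),\dots,H^{4n-2}(Z;\ZZ)$, of which exactly one, namely $H^{2n}(Z;\ZZ)\cong\ZZ_{2k}$ from the Gysin computation in Section~\ref{gysin_cohom}, is torsion and all others are free. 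Because all but finitely many extension problems involve a free abelian quotient, every extension in the filtration splits, and one obtains
\[
K^{\mathbf 0}_H(Z)\;\cong\;\bigoplus_{i=1,\ i\neq n}^{2n-1}H^{2i}(Z;\ZZ)\ \oplus\ \ZZ_{2k},
\]
where I have re-indexed the even columns $p=2i$ and pulled out the distinguished torsion term $H^{2n}(Z;\ZZ)=\ZZ_{2k}$. (Here one uses that the relevant $H^{2i}(Z;\ZZ)$ for $i\neq n$ are free by the Gysin sequence in Section~\ref{gysin_cohom}, since $M$ is torsion free.) Finally, comparing with the formula $H^{even}_H(Z,\ZZ)\cong\ZZ_{2k}\bigoplus_{j=1}^{2n-1}H^j(M,\ZZ)$ obtained in Section~\ref{twisted_cohom}, and using Theorem~\ref{k-theory_Z} together with the Gysin description of $H^{even}(Z;\ZZ)$, one checks that the two right-hand sides coincide abstractly as graded abelian groups, giving $K^{\mathbf 0}_H(Z)\cong H^{even}_H(Z)$.

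The main obstacle I anticipate is the extension problem in reconstructing $K^{\mathbf 0}_H(Z)$ from the $E_\infty$-page: a priori the presence of the torsion term $\ZZ_{2k}$ in the middle of the filtration could produce a nontrivial extension with the neighbouring free summands. The resolution is exactly the argument already used in Section~\ref{twisted_cohom} and in the untwisted Theorem~\ref{k-theory_Z}: every free abelian quotient forces the corresponding short exact sequence to split, so one peels the filtration from the top and the only possibly-nonsplit step, the one producing $\ZZ_{2k}$, sits between free groups and therefore also splits. A secondary point to be careful about is bookkeeping of which line (the $0$-line versus the $(4n-1)$-line) contributes to the even versus the odd twisted $K$-group, since the flux degree $4n-1$ is odd; this is what makes $K^{\mathbf 0}_H(Z)$ reduced and pushes the $\ZZ_{2l}$ into $K^{\mathbf 1}_H(Z)$ rather than here.
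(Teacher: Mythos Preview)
Your overall strategy is close to the paper's: both read off $K^{\mathbf 0}_H(Z)$ from the $E_\infty$-page after Propositions~\ref{odd_diff_vanish} and~\ref{diff_cup}, and both finish by matching against the twisted cohomology computed in Section~\ref{twisted_cohom}. The paper's argument is more compressed and takes a slightly different route: rather than reconstructing the group from the filtration step by step, it observes that the even part of the twisted $E_\infty$-page coincides with that of the \emph{untwisted} Atiyah--Hirzebruch spectral sequence except that the $0$-line has been killed, and concludes directly that $K^{\mathbf 0}_H(Z)\cong\widetilde K(Z)$, the reduced untwisted $K$-theory of $Z$. Theorem~\ref{k-theory_Z} then gives the stated direct sum.

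There is, however, a genuine error in your treatment of the extension problem. You correctly flag it as the main obstacle, but your resolution is invalid: you write that ``the only possibly-nonsplit step, the one producing $\ZZ_{2k}$, sits between free groups and therefore also splits.'' This is false reasoning. An extension $0\to F\to G\to \ZZ_{2k}\to 0$ with $F$ free abelian is classified by $\mathrm{Ext}^1(\ZZ_{2k},F)\cong F/2kF$, which is nonzero whenever $F\neq 0$; the standard example is $0\to\ZZ\xrightarrow{\cdot 2k}\ZZ\to\ZZ_{2k}\to 0$. So ``sitting between free groups'' buys you nothing. Peeling from either end of the filtration you do correctly split off all the free quotients, but you are left with a genuine extension $0\to(\text{free})\to\mathcal F^{2n-1}\to\ZZ_{2k}\to 0$ that your argument does not resolve.

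The paper's route avoids confronting this head-on by transplanting the problem to the untwisted case, where the answer is already known from the Gysin computation (Theorem~\ref{k-theory_Z}): since the twisted and untwisted $E_\infty$-terms contributing to even degree coincide for $p\geq 1$, and since the twist $[H]$ lies in top degree and is therefore trivial on the $(4n-2)$-skeleton, one can identify the two filtered towers below the top cell and import the splitting from the untwisted side. If you want to keep your direct approach, you need either this comparison or some other external input to close the gap; freeness alone is not enough.
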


The following proposition computes the odd twisted $K$-theory.

\begin{proposition}\label{twisted_k_odd}
$K_H^{\textbf{1}}(Z) \cong \bigoplus_{i=0}^{2n-2}H^{2i+1}(Z;\ZZ)\oplus \ZZ_{2l} 
\cong H^{odd}_H(Z;\ZZ)$
\end{proposition}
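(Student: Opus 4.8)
The plan is to extract $K^{\textbf{1}}_H(Z)$ from the $E_{\infty}$-page of the twisted Atiyah--Hirzebruch spectral sequence displayed above, in direct parallel with the computation of $K^{\textbf{0}}_H(Z)$ in Proposition \ref{twisted_k_even}, and then to resolve the resulting extension problem using that $M$ is torsion free.

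First I would record which $E_{\infty}^{p,q}$ contribute to odd total degree. Since $K^{q}(\mathrm{pt})=0$ for $q$ odd, only columns with $p$ odd survive in odd total degree; and since, by Propositions \ref{odd_diff_vanish} and \ref{diff_cup}, the only nonzero differential is $d_{4n-1}\colon H^{0}(Z;\ZZ)\to H^{4n-1}(Z;\ZZ)$, which equals $\cup H=\cup 2l$, we get $E_{\infty}^{p,*}=H^{p}(Z;\ZZ)$ for odd $p\le 4n-3$ and $E_{\infty}^{4n-1,*}=H^{4n-1}(Z;\ZZ)/2l\,\ZZ\cong\ZZ_{2l}$. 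Reading off the filtration of $K^{\textbf{1}}_H(Z)$, the subgroup $\mathcal{F}^{4n-2}$ is exactly this bottom term $\ZZ_{2l}$, and the successive subquotients of $K^{\textbf{1}}_H(Z)/\mathcal{F}^{4n-2}$ are $H^{1}(Z;\ZZ),H^{3}(Z;\ZZ),\dots,H^{4n-3}(Z;\ZZ)$.

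Next I would settle the extension problem; this is the one genuinely nontrivial point, playing the role that the identification $K^{\textbf{0}}_H(Z)\cong\widetilde{K}(Z)$ played in Proposition \ref{twisted_k_even}. Because the only torsion in $H^{*}(Z;\ZZ)$ sits in the even degree $2n$ (the base $M$ being torsion free), every group $H^{2i+1}(Z;\ZZ)$ is free abelian; hence $K^{\textbf{1}}_H(Z)/\mathcal{F}^{4n-2}$, being an iterated extension of free abelian groups, is free abelian and isomorphic to $\bigoplus_{i=0}^{2n-2}H^{2i+1}(Z;\ZZ)$. The short exact sequence
\[
0\longrightarrow\ZZ_{2l}\longrightarrow K^{\textbf{1}}_H(Z)\longrightarrow\bigoplus_{i=0}^{2n-2}H^{2i+1}(Z;\ZZ)\longrightarrow 0
\]
then splits, since $\mathrm{Ext}^{1}$ of a free abelian group into any abelian group vanishes, and we conclude $K^{\textbf{1}}_H(Z)\cong\bigoplus_{i=0}^{2n-2}H^{2i+1}(Z;\ZZ)\oplus\ZZ_{2l}$.

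For the second isomorphism I would compare with section \ref{twisted_cohom}, where $H^{odd}_H(Z;\ZZ)\cong\ZZ_{2l}\bigoplus_{j=1}^{2n-1}H^{j}(M;\ZZ)$, using that the Gysin computation of section \ref{gysin_cohom} identifies $\bigoplus_{i=0}^{2n-2}H^{2i+1}(Z;\ZZ)$ (all the odd cohomology of $Z$ below top degree) with $\bigoplus_{j=1}^{2n-1}H^{j}(M;\ZZ)$; this yields $K^{\textbf{1}}_H(Z)\cong H^{odd}_H(Z;\ZZ)$. A transparent alternative for the extension step is the long exact sequence of the pair $(Z,Z_{4n-2})$: the twist restricts trivially to the $(4n-2)$-skeleton, so for a CW structure with one top cell on the connected closed manifold $Z$ one has $K^{*}_H(Z,Z_{4n-2})\cong\widetilde{K}^{*}(S^{4n-1})$, and the connecting homomorphism is multiplication by $2l$ on the rank-one trivial summand (again by Proposition \ref{diff_cup}), reproducing the displayed exact sequence directly. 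In either approach the only real obstacle is the extension problem, which is harmless precisely because torsion-freeness of $M$ forces the non-torsion filtration quotients to be free.
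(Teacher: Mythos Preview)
Your proof is correct and follows essentially the same approach as the paper: both read off the odd-degree $E_{\infty}$ terms of the twisted Atiyah--Hirzebruch spectral sequence and resolve the filtration extension problem using that the odd cohomology groups $H^{2i+1}(Z;\ZZ)$ are free abelian (by torsion-freeness of $M$). The only cosmetic difference is that the paper splits each successive short exact sequence $0\to\mathcal{F}^{2i+1}\to\mathcal{F}^{2i-1}\to H^{2i+1}(Z;\ZZ)\to 0$ from the bottom up, whereas you first collapse the upper quotients into a single free group and split once; your additional remark about the pair $(Z,Z_{4n-2})$ is a nice alternative not present in the paper.
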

\begin{proof}
We observe that since $E_{\infty}^{p,q} = 0$ when $q$ is odd, we have that
$\mathcal{F}^0 = K_H^{\textbf{1}}(Z)$ and $\mathcal{F}^{2i} = \mathcal{F}^{2i-1}$ for
$i > 0$. In particular, our filtration of $K_H^{\textbf{1}}(Z)$ can be written as
\begin{equation*}
K_H^{\textbf{1}}(Z) \supseteq \mathcal{F}^1 \supseteq \mathcal{F}^3
\supseteq \cdots \supseteq \mathcal{F}^{4n-3} \supseteq \mathcal{F}^{4n-1}
= \{0\}
\end{equation*}
where $\frac{K_H^{\textbf{1}}(Z)}{\mathcal{F}^1} \cong H^1(Z;\ZZ)$,
$\frac{\mathcal{F}^{2i-1}}{\mathcal{F}^{2i+1}} \cong H^{2i+1}(Z;\ZZ)$ for
$1 \leq i \leq n-2$, and 
$\frac{\mathcal{F}^{4n-3}}{\mathcal{F}^{4n-1}} \cong \ZZ_{2l}$. As 
$\mathcal{F}^{4n-1} = \{0\}$, we immediately obtain 
$\mathcal{F}^{4n-3} = \ZZ_{2l}$.

The filtration determines short exact sequences
\begin{equation*}
0 \rightarrow \mathcal{F}^{2i+1} \rightarrow \mathcal{F}^{2i-1}
\rightarrow \frac{\mathcal{F}^{2i-1}}{\mathcal{F}^{2i+1}} \rightarrow 0
\end{equation*}
for each $0 \leq i \leq n-1$.

In the case that $i = n-1$, the short exact sequence tells us that
$\mathcal{F}^{4n-3} = \ZZ_{2l}$, which we already knew.

In the case that $0 \leq i < n-1$, we note that
the groups $H^{2i+1}(Z;\ZZ)$, for $0 \leq i \leq n-1$, are torsion free. 
This implies that the above short exact sequences split for all 
$0 \leq i < n-1$. 

These observations immediately imply 
\begin{equation*}
K_H^{\textbf{1}}(Z) \cong \bigoplus_{i=0}^{2n-2}H^{2i+1}(Z;\ZZ)\oplus \ZZ_{2l}
\end{equation*} 
which proves the first isomorphism in the statement of the proposition. The 
second isomorphism follows from the computations carried out in section 
\ref{twisted_cohom}.

\end{proof}

The arguments we gave, via a spectral sequence, to compute $K^*_H(Z)$
can be applied to compute $K^*_{\widehat{H}}(\widehat{Z})$. As the details are 
exact analogous to what we did above for $Z$, we simply state the 
results in the following proposition.

\begin{proposition}\label{twisted_k_dual}
We have the following isomorphisms
\begin{align*}
K^{\textbf{0}}_{\widehat{H}}(\widehat{Z}) &\cong 
\bigoplus_{i=1, i \neq n}^{2n-1}H^{2i}(\widehat{Z};\ZZ) \oplus \ZZ_{2l} \cong
H^{even}_{\widehat{H}}(\widehat{Z};\ZZ) \\
K^{\textbf{1}}_{\widehat{H}}(\widehat{Z}) &\cong 
\bigoplus_{i=0}^{2n-2}H^{2i}(\widehat{Z};\ZZ) \oplus \ZZ_{2k} 
\cong H^{odd}_{\widehat{H}}(\widehat{Z};\ZZ).
\end{align*}
\end{proposition}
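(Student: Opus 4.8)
The plan is to run exactly the same machinery that produced Propositions \ref{twisted_k_even} and \ref{twisted_k_odd}, but with the roles of the two integers $2k$ and $2l$ interchanged. First I would record the relevant data for the T-dual: by definition of the spherical T-dual pair, $\widehat Z$ is an $S^{2n-1}$-bundle over $M$ with Euler class $e(\widehat Z) = \pi_*(H) \in H^{2n}(M;\ZZ)$ corresponding to the integer $2l$, and the flux $\widehat H \in H^{4n-1}(\widehat Z;\ZZ)$ corresponds to $2k = \widehat\pi_*^{-1}\pi_*(\text{image of }e(Z))$. Since $M$ is still assumed torsion-free, the Gysin sequence computation of section \ref{gysin_cohom} applies verbatim to $\widehat Z$, yielding $H^{even}(\widehat Z;\ZZ)$ and $H^{odd}(\widehat Z;\ZZ)$ with $\ZZ_{2l}$ now appearing as the torsion in $H^{2n}(\widehat Z;\ZZ)$ in place of $\ZZ_{2k}$; and Theorem \ref{k-theory_Z} (with $\widehat Z$ in place of $Z$, already stated as the theorem immediately following it) gives $K^{even/odd}(\widehat Z) \cong H^{even/odd}(\widehat Z;\ZZ)$.

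Next I would set up the twisted Atiyah--Hirzebruch spectral sequence for $K^*_{\widehat H}(\widehat Z)$ with $E_2^{p,2j} = H^p(\widehat Z;\ZZ)$, $E_2^{p,2j+1} = 0$. The vanishing of the lower odd differentials $d_3, d_5, \dots, d_{2n-1}$ follows exactly as in Proposition \ref{odd_diff_vanish}: the projection $\widehat\pi : \widehat Z \to M$ induces a map of spectral sequences, $\widehat\pi^*$ is an isomorphism in degrees $< 2n$, and since $d_j$ is a torsion operator while $H^{2n}(M;\ZZ) \cong \ZZ$ is torsion-free, the same diagram chase forces these differentials to vanish. By Proposition \ref{diff_cup} applied to $\widehat Z$ (the cellular-approximation-plus-Mayer--Vietoris argument depends only on the dimension $4n-1$ and the top flux class), the only surviving differential is $d_{4n-1} = \cup\,\widehat H = \cup\, 2k : H^0(\widehat Z;\ZZ) \cong \ZZ \to H^{4n-1}(\widehat Z;\ZZ) \cong \ZZ$. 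Reading off the $E_\infty$-page exactly as in Propositions \ref{twisted_k_even} and \ref{twisted_k_odd}: the $0$-line is killed, so $K^{\textbf{0}}_{\widehat H}(\widehat Z) \cong \widetilde K(\widehat Z)$, giving $\bigoplus_{i=1,i\neq n}^{2n-1}H^{2i}(\widehat Z;\ZZ)\oplus\ZZ_{2l}$; and the $(4n-1)$-line contributes $\ZZ_{2k}$ to the odd groups, with all intervening short exact sequences split since the relevant $H^{2i+1}(\widehat Z;\ZZ)$ are torsion-free, giving $K^{\textbf{1}}_{\widehat H}(\widehat Z) \cong \bigoplus_{i=0}^{2n-2}H^{2i+1}(\widehat Z;\ZZ)\oplus\ZZ_{2k}$. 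The final identifications with $H^{even/odd}_{\widehat H}(\widehat Z;\ZZ)$ are then read off from the twisted-cohomology computation in section \ref{twisted_cohom}.

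Since the proposition is explicitly stated to have a proof "exactly analogous to what we did above for $Z$," there is no genuine obstacle here — the only point requiring a moment's care is bookkeeping: making sure that it is $2l$ (the Euler class of $\widehat Z$) that lands in even twisted $K$-theory and $2k$ (the dual flux $\widehat H$) that lands in odd twisted $K$-theory, which is precisely the swap that produces the spherical T-duality degree shift $K^{\textbf{0}}_H(Z) \cong K^{\textbf{1}}_{\widehat H}(\widehat Z)$ and $K^{\textbf{1}}_H(Z) \cong K^{\textbf{0}}_{\widehat H}(\widehat Z)$. I would therefore keep the exposition brief, flag the index-swap explicitly, and simply cite Theorem \ref{k-theory_Z}, Propositions \ref{odd_diff_vanish} and \ref{diff_cup}, and section \ref{twisted_cohom} as the inputs, rather than reproducing the spectral-sequence diagrams.
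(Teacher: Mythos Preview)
Your proposal is correct and follows exactly the approach the paper intends: the paper gives no proof at all beyond the remark that the details are ``exactly analogous to what we did above for $Z$,'' and your write-up simply makes that analogy explicit by rerunning Propositions \ref{odd_diff_vanish}, \ref{diff_cup}, \ref{twisted_k_even}, and \ref{twisted_k_odd} with $2k \leftrightarrow 2l$. Your bookkeeping of which integer lands where is accurate.
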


From these computations we thus see that the spherical $T$-dual pairs $(Z, H)$ and 
$(\widehat{Z}, \widehat{H})$ have isomorphic twisted $K$-theories upto a degree shift.

\begin{theorem}\label{twisted_k_isos}
We have the following isomorphism
\begin{equation*}
K_H^{\textbf{0}/\textbf{1}}(Z) \cong K_{\widehat{H}}^{\textbf{1}/\textbf{0}}(\widehat{Z}).
\end{equation*}
\end{theorem}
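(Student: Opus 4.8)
The plan is to assemble the isomorphism directly from the computations already carried out for the two sides of the dual pair. By Proposition \ref{twisted_k_even} and Proposition \ref{twisted_k_odd} we have
\[
K_H^{\textbf{0}}(Z) \cong \bigoplus_{i=1,\, i\neq n}^{2n-1} H^{2i}(Z;\ZZ)\oplus \ZZ_{2k},
\qquad
K_H^{\textbf{1}}(Z) \cong \bigoplus_{i=0}^{2n-2} H^{2i+1}(Z;\ZZ)\oplus \ZZ_{2l},
\]
while Proposition \ref{twisted_k_dual} gives the analogous descriptions for $\widehat Z$ with the roles of $2k$ and $2l$ interchanged. So the first step is simply to write all four groups in terms of the cohomology of the base $M$, using the Gysin computations of section \ref{gysin_cohom}: each $H^{\text{odd}}(Z;\ZZ)$ summand and each $H^{\text{even}}(Z;\ZZ)$ summand unwinds to $\ZZ\oplus\bigoplus_{j=1}^{2n-1}H^j(M;\ZZ)$ plus the relevant torsion group.

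Second, I would compare $K_H^{\textbf{0}}(Z)$ with $K_{\widehat H}^{\textbf{1}}(\widehat Z)$ and $K_H^{\textbf{1}}(Z)$ with $K_{\widehat H}^{\textbf{0}}(\widehat Z)$ term by term. The free parts of all four groups are the same copy of $\bigoplus_{j=1}^{2n-1}H^j(M;\ZZ)$ coming from the base (together with the $\ZZ$ summands from $H^0$ and $H^{4n-1}$, which survive or die according to the single nonzero differential $d_{4n-1}$ established in Propositions \ref{odd_diff_vanish} and \ref{diff_cup}), so the free ranks match on the nose. The torsion parts are $\ZZ_{2k}$ on one side and $\ZZ_{2l}$ on the other, and the degree shift $\textbf{0}\leftrightarrow\textbf{1}$ is exactly what swaps $e(Z)=2k$ with $e(\widehat Z)=2l$; since $\widehat H = 2k$ and $e(\widehat Z) = 2l$ by construction of the spherical T-dual pair, $K_H^{\textbf{0}}(Z)$ has torsion $\ZZ_{2k}$ and $K_{\widehat H}^{\textbf{1}}(\widehat Z)$ has torsion $\ZZ_{2k}$ as well, and symmetrically for the other degree. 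This establishes abstract isomorphisms of the underlying abelian groups, which suffices for the statement as phrased.

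Third — and this is where I expect the only real subtlety — one should note the already-proved chain of isomorphisms $H^{\text{even}/\text{odd}}_H(Z;\ZZ)\cong H^{\text{odd}/\text{even}}_{\widehat H}(\widehat Z;\ZZ)$ from section \ref{twisted_cohom}, together with the identifications $K^{\textbf{0}}_H(Z)\cong H^{\text{even}}_H(Z)$ etc.\ furnished by Propositions \ref{twisted_k_even}, \ref{twisted_k_odd}, \ref{twisted_k_dual}. Concatenating,
\[
K_H^{\textbf{0}}(Z)\cong H^{\text{even}}_H(Z)\cong H^{\text{odd}}_{\widehat H}(\widehat Z)\cong K_{\widehat H}^{\textbf{1}}(\widehat Z),
\]
and likewise with the degrees swapped, which is the claimed isomorphism. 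The main obstacle is bookkeeping: one must be careful that the summand index ranges in Proposition \ref{twisted_k_dual} line up correctly with those for $Z$ (the statement there writes $\bigoplus_{i=0}^{2n-2}H^{2i}(\widehat Z;\ZZ)$ in the odd line, which should be read as the odd-degree cohomology, consistent with the $\ZZ$-grading conventions fixed at the start of the section) and that the torsion subgroups $\ZZ_{2k}$, $\ZZ_{2l}$ are attached to the correct parity after the shift. Once the four group descriptions are laid side by side this is immediate, so the proof is essentially a two-line citation of the preceding propositions plus the cohomological T-duality isomorphism.
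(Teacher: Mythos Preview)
Your proposal is correct and matches the paper's approach: the paper gives no separate proof of this theorem, simply stating it as an immediate consequence of the explicit computations in Propositions \ref{twisted_k_even}, \ref{twisted_k_odd}, and \ref{twisted_k_dual} (together with the cohomological T-duality isomorphisms of section \ref{twisted_cohom}), which is precisely the concatenation you describe. Your third paragraph's chain of isomorphisms is exactly the intended argument, and you are right that the only content is bookkeeping of the summands and torsion pieces.
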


\section{Spectral sequences}

To show that the Chern character is a real isomorphism from the twisted $K$-theory of a closed manifold $X$ to the twisted cohomology of $X$, in \cite{AS1} Atiyah and Segal consider two spectral sequences.  The first of these is a generalisation of the Atiyah-Hirzebruch spectral sequence \cite[Section 5]{AS1}, which computes the twisted $K$-theory of $X$, while the second computes the twisted cohomology of $X$.  We give in this section the correct generalisations of these spectral sequences to the higher twisted setting (noting that the second of these has already been covered in great detail in \cite{LLW}).

\subsection{The higher twisted Atiyah-Hirzebruch spectral sequence}

The Atiyah-Hirzebruch spectral sequence for $K$-theory was introduced for finite CW complexes $X$ (of which closed manifolds are examples) in the paper \cite{AH}.  As pointed out in \cite{AS1}, there is an analogous spectral sequence for twisted $K$-theory.

Our goal in this subsection is to show that there is an Atiyah-Hirzebruch spectral sequence for \emph{higher} twisted $K$-theory also, which generalises that in \cite{AS1}.  This spectral sequence will later be compared with that for higher twisted cohomology.  To construct the Atiyah-Hirzebruch spectral sequence, we must shift our attention from \emph{manifolds} to \emph{CW complexes}.  Moreover, we will work with \emph{general twists}, arising from classifying maps into $BAut(\cO_{\infty}\otimes\cK)$, which need not correspond to cohomology classes as outlined in the Preliminaries. 

\begin{definition}
	By a \textbf{pair with twist} we mean a triple $(X,A,\lambda)$, where $X$ is a finite CW complex, $A\subset X$ a CW subcomplex, and $\lambda:X\rightarrow BAut(\cO_{\infty}\otimes\cK)$ a twist over $X$.  Let $(X,A,\lambda_{X})$ and $(Y,B,\lambda_{Y})$ be pairs with twist.  A \textbf{morphism} $f:(X,A,\lambda_{X})\rightarrow(Y,B,\lambda_{Y})$ is a morphism $f$ of CW pairs such that $\lambda_{Y}\circ f = \lambda_{X}$.
\end{definition}

We remind the reader that, given a twist $\lambda:X\rightarrow BAut(\cO_{\infty}\otimes\cK)$ and a closed subspace $A\subset X$, then denoting by $F^{n}_{\lambda}$ the associated bundle over $X$ with fibre $\Omega^{n}Fred_{\cO_{\infty}\otimes\cK}$, the higher twisted $K$-theory group $K^{n}_{\lambda}(X,A)$ is the homotopy classes of sections of $F^{n}_{\lambda}$ that are equal to the identity outside of some compact set in $X-A$.  Clearly now if $f:(X,A,\lambda_{X})\rightarrow (Y,B,\lambda_{Y})$ is a morphism of pairs with twist, then $f^{*}F^{n}_{\lambda_{Y}} = F^{n}_{\lambda_{X}}$, so that the pullback by $f$ of any section of $F^{n}_{\lambda_{Y}}$ is a section of $F^{n}_{\lambda_{X}}$.  That is:

\begin{proposition}
	The assignment $(X,A,\lambda)\mapsto K^{n}_{\lambda}(X,A)$ is a functor from the category of pairs with twist to the category of abelian groups.\qed
\end{proposition}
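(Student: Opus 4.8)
The plan is to verify the two defining conditions of a functor: that each $K^{n}_{\lambda}(X,A)$ is canonically an abelian group, and that a morphism $f:(X,A,\lambda_{X})\rightarrow(Y,B,\lambda_{Y})$ of pairs with twist induces a homomorphism $f^{*}$ compatibly with composition and identities, so that the assignment is a contravariant functor to abelian groups.

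First I would pin down the abelian group structure. Recall from the Preliminaries that $Aut(\cO_{\infty}\otimes\cK)$ acts on $Fred_{\cO_{\infty}\otimes\cK}$ by conjugation, and that the induced action on $\Omega^{n}Fred_{\cO_{\infty}\otimes\cK}$ is pointwise on loops. Since $\cO_{\infty}\otimes\cK$ is $KK$-equivalent to $\CC$, the space $Fred_{\cO_{\infty}\otimes\cK}$ models the classifying space of complex $K$-theory and in particular is an infinite loop space, so concatenation of loops makes $\Omega^{n}Fred_{\cO_{\infty}\otimes\cK}$ a grouplike, homotopy-commutative $H$-space with the constant identity loop as unit; being pointwise, the $Aut(\cO_{\infty}\otimes\cK)$-action is by $H$-maps (for $n=0$ one instead invokes Bott periodicity $K^{n}_{\lambda}\cong K^{n+2}_{\lambda}$ to reduce to the case $n\geq1$). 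Consequently the associated bundle $F^{n}_{\lambda}$ carries a fibrewise grouplike $H$-space structure with unit the identity section, and the homotopy classes of sections equal to the identity outside a compact subset of $X\setminus A$ form an abelian group under $[s]+[s']:=[x\mapsto s(x)+s'(x)]$. This is the only point in the argument that is not bookkeeping.

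Next, given a morphism $f:(X,A,\lambda_{X})\rightarrow(Y,B,\lambda_{Y})$, i.e.\ a map of CW pairs with $\lambda_{Y}\circ f=\lambda_{X}$, the bundle $F^{n}_{\lambda}$ is associated via the pointwise conjugation action to the principal $Aut(\cO_{\infty}\otimes\cK)$-bundle classified by $\lambda$, so naturality of the associated bundle construction gives a canonical isomorphism $f^{*}F^{n}_{\lambda_{Y}}\cong F^{n}_{\lambda_{X}}$ of $\Omega^{n}Fred_{\cO_{\infty}\otimes\cK}$-bundles over $X$ (as already observed before the statement). Thus pulling a section $s$ of $F^{n}_{\lambda_{Y}}$ back along $f$ yields a section $f^{*}s$ of $F^{n}_{\lambda_{X}}$. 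If $s$ equals the identity outside a compact set $C\subseteq Y\setminus B$, then $f^{*}s$ equals the identity outside $f^{-1}(C)$; this set is closed in the compact space $X$, hence compact, and since $f(A)\subseteq B$ we have $A\subseteq f^{-1}(B)$, whence $f^{-1}(C)\subseteq f^{-1}(Y\setminus B)\subseteq X\setminus A$. A homotopy through sections equal to the identity outside a fixed compact subset of $Y\setminus B$ pulls back to a homotopy of the same type over $X$, so $[s]\mapsto[f^{*}s]$ is a well-defined map $f^{*}:K^{n}_{\lambda_{Y}}(Y,B)\rightarrow K^{n}_{\lambda_{X}}(X,A)$.

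Finally, since addition of sections is defined fibrewise and $(f^{*}s)(x)=s(f(x))$, one has $f^{*}(s+s')=f^{*}s+f^{*}s'$, so $f^{*}$ is a group homomorphism; and $\id_{X}^{*}=\id$ and $(g\circ f)^{*}=f^{*}\circ g^{*}$ follow from the corresponding identities for pullbacks of bundles and their sections. This exhibits $(X,A,\lambda)\mapsto K^{n}_{\lambda}(X,A)$ as a contravariant functor to abelian groups. The main obstacle, as noted, is really just establishing the $Aut(\cO_{\infty}\otimes\cK)$-equivariant grouplike $H$-space structure on $\Omega^{n}Fred_{\cO_{\infty}\otimes\cK}$; once that is in hand the remaining steps are routine compatibility checks, the most error-prone piece being the control of compact supports under $f^{-1}$ together with the inclusion $f^{-1}(Y\setminus B)\subseteq X\setminus A$.
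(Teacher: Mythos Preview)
Your proof is correct and follows exactly the approach the paper intends: the proposition in the paper carries a \qed and is justified only by the sentence preceding it, which observes that $f^{*}F^{n}_{\lambda_{Y}} = F^{n}_{\lambda_{X}}$ so that sections pull back. You have simply filled in the details the paper leaves implicit---the abelian group structure from the fibrewise $H$-space structure on $\Omega^{n}Fred_{\cO_{\infty}\otimes\cK}$, the compact-support check, and the functoriality identities---all of which are routine and in line with what the paper takes for granted.
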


As identified by Pennig \cite{Pennig}, one way of obtaining the six-term exact sequence in higher twisted $K$-theory is to use an identification with operator $K$-theory groups.  For the purpose of showing that our Chern character is a real isomorphism, we choose here to take a different, more topological approach inspired by Atiyah \cite{A67}.

Associated to the pair with twist $(X,A,\lambda)$, we have the sequence $(A,\emptyset,\lambda)\rightarrow(X,\emptyset,\lambda)\rightarrow(X,A,\lambda)$ of pairs with twist, which by functoriality induces the exact sequence
\[
K^{n}_{\lambda}(X,A)\xrightarrow{\ext_{*}}K^{n}_{\lambda}(X)\xrightarrow{\rest_{*}}K^{n}_{\lambda}(A)
\]
of $K$-theory groups.  Here $\ext_{*}$ is induced by extension of sections by the identity, and $\rest_{*}$ is induced by restriction of sections.  To obtain the corresponding long exact sequnece, we must give a connecting homomorphism $\partial:K^{n}_{\lambda}(A)\rightarrow K^{n+1}_{\lambda}(X,A)$.  We follow the definition given by Atiyah for ordinary $K$-theory \cite[p. 71-72]{A67}, with some necessary modifications from Pennig \cite{Pennig} to deal with the fact that twists do not extend to one-point compactifications.

Following Pennig, define $c(X,A):=X\sqcup(A\times[0,1))/\sim$, where $(a,0)\in A\times[0,1)$ is glued to $a\in X$.  We call this the \emph{open cone} on $A$.  The twist $\lambda$ over $X$ extends trivially to a twist $c\lambda$ over $c(X,A)$, and via a neighbourhood retract of $c(X,A)$ onto $X$ we have
\[
K^{n+1}_{\lambda}(X,A) = K^{n+1}_{c\lambda}(c(X,A)).
\]
Associated then to the map $m:(c(X,A),\emptyset,c\lambda)\rightarrow(c(X,A),X,c\lambda)$ of pairs with twist, we obtain a map
\[
m^{*}:K^{n+1}_{c\lambda}(c(X,A),X)\rightarrow K^{n+1}_{c\lambda}(c(X,A)) = K^{n+1}_{\lambda}(X,A).
\]
Now, the space $c(X,A)-X$ is simply the product $A\times(0,1)$ (which we call the \emph{open suspension} of $A$).  Thus if $\sigma$ is a section of $F^{n+1}_{c\lambda}$ over $c(X,A)-X$ which is equal to the identity outside of some compact set, we may define a new section $\tilde{\sigma}:A\rightarrow F^{n+2}_{\lambda}$ by the formula
\[
\tilde{\sigma}(a)(e^{2\pi it}):=\sigma(a,t),\hspace{7mm}a\in A,\,t\in S^{1}.
\]
Note that well-definedness of $\tilde{\sigma}(a)$ as a function on the circle follows from the fact that $\sigma$ is equal to the identity outside of some compact subset of $A\times(0,1)$.  The map $\sigma\mapsto\tilde{\sigma}$ descends to a well-defined map on homotopy classes, whose composite with the Bott periodicity isomorphism $K^{n+2}_{\lambda}(A)\cong K^{n}_{\lambda}(A)$ therefore defines a map
\[
\theta:K^{n+1}_{c\lambda}(c(X,A),X)\rightarrow K^{n}_{\lambda}(A).
\]
We thus obtain our connecting homomorphism
\begin{equation}\label{connecting}
\partial_{n}:=m^{*}\circ\theta^{-1}:K^{n}_{\lambda}(A)\rightarrow K^{n+1}_{\lambda}(X,A).
\end{equation}
Applying Bott periodicity and using similar arguments to those of Atiyah \cite[Proposition 2.4.4]{A67}, we therefore have the six-term sequence defined in topological terms.

\begin{theorem}\label{sixterm}
	Let $(X,A,\lambda)$ be a pair with twist.  Then associated to the sequence
	\[
	(A,\emptyset,\lambda)\rightarrow(X,\emptyset,\lambda)\rightarrow(X,A,\lambda)
	\]
	of pairs with twist we have the exact sequence
	\begin{center}
		\begin{tikzcd}
		K^{\mathbf{0}}_{\lambda}(X,A)\ar[r,"\ext_{*}"] & K^{\mathbf{0}}_{\lambda}(X)\ar[r,"\rest_{*}"] & K^{\mathbf{0}}_{\lambda}(A) \ar[d,"\partial_{\mathbf{0}}"] \\ K^{\mathbf{1}}_{\lambda}(A) \ar[u,"\partial_{\mathbf{1}}"] & K^{\mathbf{1}}_{\lambda}(X) \ar[l,"\rest_{*}"] & K^{\mathbf{1}}_{\lambda}(X,A) \ar[l,"\ext_{*}"]
		\end{tikzcd}
	\end{center}
	of twisted $K$-groups.\qed
\end{theorem}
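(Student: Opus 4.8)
The plan is to follow Atiyah's classical argument for ordinary $K$-theory \cite[Section 2.4]{A67}, checking at each step that the constructions made above in topological terms respect the twist. The exactness assertion breaks into several local exactness statements, which I would establish in the following order.

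First I would verify exactness at $K^{\mathbf{0}}_{\lambda}(X)$ and at $K^{\mathbf{1}}_{\lambda}(X)$, i.e.\ exactness of the segment $K^{n}_{\lambda}(X,A)\xrightarrow{\ext_{*}}K^{n}_{\lambda}(X)\xrightarrow{\rest_{*}}K^{n}_{\lambda}(A)$. That $\rest_{*}\circ\ext_{*}=0$ is immediate, since a section which is the identity on (a neighbourhood of) $A$ restricts to the identity. For the reverse inclusion, suppose $s$ is a section of $F^{n}_{\lambda}$ over $X$ whose restriction to $A$ is null-homotopic through sections over $A$; using the CW structure and the homotopy extension property for the bundle $F^{n}_{\lambda}$ (whose fibre $\Omega^{n}\Fred_{\cO_{\infty}\otimes\cK}$ is, as noted in the Preliminaries, built from an open subset of a Banach manifold, so fibrewise-nice), I would push that null-homotopy out to a homotopy of $s$ to a section equal to the identity on a neighbourhood of $A$, which by definition represents a class in $K^{n}_{\lambda}(X,A)$ mapping to $[s]$.

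Next I would establish exactness at $K^{n}_{\lambda}(A)$ and at $K^{n+1}_{\lambda}(X,A)$. The key observation is the identification $K^{n+1}_{\lambda}(X,A)=K^{n+1}_{c\lambda}(c(X,A))$ together with the cofibration sequence $X\hookrightarrow c(X,A)\to c(X,A)/X$, where $c(X,A)-X=A\times(0,1)$ is the open suspension of $A$. Iterating the three-term exact sequence already established (applied to the pair $(c(X,A),X,c\lambda)$) and splicing gives a longer exact string; the maps $m^{*}$ and $\theta$ introduced above are precisely the comparison isomorphisms that reindex this string back in terms of $X$, $A$ and the loop/suspension shift, exactly as in \cite[Proposition 2.4.4]{A67}. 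Here I would lean on Pennig's device of the \emph{open} cone to sidestep the fact that $\lambda$ need not extend over a one-point compactification: all twists in sight are of the form $c(\cdot)\lambda$, pulled back from $X$, so functoriality of $(X,A,\lambda)\mapsto K^{n}_{\lambda}(X,A)$ applies verbatim.

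Finally, Bott periodicity $K^{n+2}_{\lambda}(X,A)\cong K^{n}_{\lambda}(X,A)$ (which holds in the twisted setting as recalled in the Preliminaries, since it is fibrewise) collapses the resulting long exact sequence into the stated six-term cyclic diagram, with connecting maps $\partial_{\mathbf{0}},\partial_{\mathbf{1}}$ given by \eqref{connecting}. The main obstacle I anticipate is purely technical rather than conceptual: one must check that all the homotopies and section-extension arguments, which in \cite{A67} are carried out for finite-dimensional vector bundles, go through for bundles with the infinite-dimensional fibre $\Omega^{n}\Fred_{\cO_{\infty}\otimes\cK}$ and for sections constrained to equal the identity outside a compact set. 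This is where the Banach-manifold structure noted in Section~\ref{prelims} and the fact that $\Fred_{\cO_{\infty}\otimes\cK}$ is a classifying space for $K$-theory do the real work; granting those, the diagram chase is routine.
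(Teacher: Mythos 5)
Your proposal is correct and follows essentially the same route as the paper: the paper likewise constructs $\partial_{n}=m^{*}\circ\theta^{-1}$ via Pennig's open cone and open suspension, and then simply cites Atiyah \cite[Proposition 2.4.4]{A67} together with Bott periodicity for the exactness of the resulting six-term cycle. If anything you are slightly more careful than the text, which asserts exactness of the three-term segment ``by functoriality'' whereas you correctly note that the homotopy extension property for sections of $F^{n}_{\lambda}$ is what is actually needed at the middle term.
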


We use Theorem \ref{sixterm} to give an Atiyah-Hirzebruch spectral sequence computing $K^{*}_{\lambda}(X)$ as follows.  Denote the $p$-skeleton of $X$ by $X^{p}$.  Each inclusion $X^{p}\hookrightarrow X$ induces by a homomorphism $K^{*}_{\lambda}(X)\rightarrow K^{*}_{\lambda}(X^{p})$ on higher twisted $K$-theory induced by restricting sections of $F_{\lambda}$ to $X^{p}$.  We thus filter the higher twisted $K$-theory groups $K^{*}_{\lambda}(X)$ of $X$ by the skeleta of $X$:
\[
\FF^{p}K^{*}_{\lambda}(X):=\ker\big(K^{*}_{\lambda}(X)\rightarrow K^{*}_{\lambda}(X^{p-1})\big) = \im\big(K^{*}_{\lambda}(X,X^{p-1})\rightarrow K^{*}_{\lambda}(X)\big),\hspace{7mm}p>0,
\]
taking in particular $\FF^{0}K^{*}_{\lambda}(X) = K^{*}_{\lambda}(X)$.  We have the following analogue of \cite[Theorem 2.1]{AH}.

\begin{theorem}[Higher twisted Atiyah-Hirzebruch spectral sequence]\label{AH}
	Let $X$ be a finite CW complex, equipped with a higher twist $P$.  Then there is a spectral sequence $\{E^{p}_{r}(K)\}$ with
	\[
	E_{2}^{p}(K)\cong H^{p}(X,\ZZ),\hspace{7mm}E_{\infty}^{p}(K) =\bigoplus_{*\in\ZZ_{2}} \FF^{p}K^{p\bmod2+*}_{\lambda}(X)/\FF^{p+1}K^{p\bmod2+*}_{\lambda}(X),
	\]
	which computes $K^{*}_{\lambda}(X)$.
\end{theorem}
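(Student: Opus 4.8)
The plan is to construct the spectral sequence from the exact couple arising from the skeletal filtration of $X$, exactly in the spirit of \cite[Theorem 2.1]{AH} and \cite[Section 5]{AS1}, using Theorem \ref{sixterm} as the input in place of the ordinary $K$-theory six-term sequence. First I would set up the bigraded exact couple: for each pair of skeleta $X^{p-1}\subset X^{p}$, the morphism of pairs with twist $(X^{p},X^{p-1},\lambda)$ (meaning the twist restricted from $X$, which is legitimate since $X^{p}\hookrightarrow X$ is a morphism of pairs with twist) feeds into Theorem \ref{sixterm} to yield a six-term exact sequence relating $K^{*}_{\lambda}(X^{p},X^{p-1})$, $K^{*}_{\lambda}(X^{p})$ and $K^{*}_{\lambda}(X^{p-1})$. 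Assembling these over all $p$ and using the maps induced by the inclusions $X^{p-1}\hookrightarrow X^{p}$ gives an exact couple with $D^{p,q} = K^{p+q}_{\lambda}(X^{p})$ (or, adapted to the $\ZZ_{2}$-grading, $D^{p} = \bigoplus_{*\in\ZZ_{2}}\FF^{p}K^{p\bmod 2+*}_{\lambda}(X)$) and $E^{p,q}_{1} = K^{p+q}_{\lambda}(X^{p},X^{p-1})$. Taking the derived couple repeatedly produces the pages $E^{p}_{r}(K)$ and their differentials.

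Next I would identify the $E_{2}$-page. The key computation is $E^{p,q}_{1} = K^{p+q}_{\lambda}(X^{p},X^{p-1}) = K^{p+q}_{\lambda}(X^{p}\setminus X^{p-1})$, and since $X^{p}\setminus X^{p-1}$ is a disjoint union of open $p$-cells, on which the twist $\lambda$ is necessarily trivial (each cell is contractible), this reduces to ordinary relative $K$-theory of a wedge of $p$-spheres. By Bott periodicity and the computation $\pi_{j}(Fred_{\cO_{\infty}\otimes\cK})\cong\ZZ$ for $j$ even and $0$ for $j$ odd recalled in Section \ref{prelims}, we get $E^{p,q}_{1}\cong C^{p}(X;\ZZ)$, the cellular $p$-cochains with coefficients in $K^{q}(pt)$, which vanishes for $q$ odd. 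The $d_{1}$ differential is then checked to be the cellular coboundary (this is local on cells, hence insensitive to the twist, so the argument of \cite{AH} applies verbatim), giving $E^{p}_{2}(K)\cong H^{p}(X;\ZZ)$.

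Finally I would establish convergence: since $X$ is a finite CW complex the filtration is finite, so the spectral sequence degenerates at a finite page and $E^{p}_{\infty}(K)$ is the associated graded of the filtration $\FF^{\bullet}K^{*}_{\lambda}(X)$ described before the theorem; the identification $\FF^{p}K^{*}_{\lambda}(X) = \ker(K^{*}_{\lambda}(X)\to K^{*}_{\lambda}(X^{p-1})) = \im(K^{*}_{\lambda}(X,X^{p-1})\to K^{*}_{\lambda}(X))$ is precisely the content of the preceding discussion together with exactness in Theorem \ref{sixterm}, so $E^{p}_{\infty}(K) = \bigoplus_{*\in\ZZ_{2}}\FF^{p}K^{p\bmod 2+*}_{\lambda}(X)/\FF^{p+1}K^{p\bmod 2+*}_{\lambda}(X)$ as claimed. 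I expect the main obstacle to be bookkeeping the $\ZZ_{2}$-grading correctly throughout the exact couple — in particular ensuring the differentials shift the filtration degree by $r$ and the $\ZZ_{2}$-degree by $1$ compatibly with Bott periodicity — rather than any deep new idea; the one genuinely twist-sensitive point, namely that $E_{1}$ only sees the twist through the (trivial) restriction to cells, is handled cleanly because open cells are contractible.
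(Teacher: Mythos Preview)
Your proposal is correct and follows essentially the same approach as the paper: build the exact couple from the six-term sequences of Theorem \ref{sixterm} applied to the skeletal pairs $(X^{p},X^{p-1},\lambda)$, identify the $E_{1}$-page with ordinary relative $K$-theory by trivialising the twist over the contractible open cells, recognise $d_{1}$ as the cellular coboundary, and invoke finiteness of the CW structure for convergence. The only cosmetic difference is that the paper phrases the identification of $E_{1}^{p,*}$ via the ordinary relative Chern character $K^{*}(X^{p},X^{p-1})\cong H^{*}(X^{p},X^{p-1};\ZZ)$ rather than directly via the homotopy groups of $\Fred_{\cO_{\infty}\otimes\cK}$, but this comes to the same thing.
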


\begin{proof}
	For each $p>0$, we have the sequence
	\[
	(X^{p-1},\emptyset,\lambda)\rightarrow(X^{p},\emptyset,\lambda)\rightarrow(X^{p},X^{p-1},\lambda)
	\]
	of pairs with twist, and thus by Theorem \ref{sixterm} we obtain the long exact sequence
	\begin{equation}\label{les}
	\begin{tikzcd}
	K^{\mathbf{0}}_{\lambda}(X^{p},X^{p-1})\ar[r,"\ext_{*}"] & K^{\mathbf{0}}_{\lambda}(X^{p}) \ar[r,"\rest_{*}"] & K^{\mathbf{0}}_{\lambda}(X^{p-1}) \ar[d, "\partial_{\mathbf{0}}"] \\ K^{\mathbf{1}}_{\lambda}(X^{p-1}) \ar[u,"\partial_{\mathbf{1}}"] & K^{\mathbf{1}}_{\lambda}(X^{p}) \ar[l, "\rest_{*}"] & K^{\mathbf{1}}_{\lambda}(X^{p},X^{p-1}) \ar[l,"\ext_{*}"]
	\end{tikzcd}
	\end{equation}
	of higher twisted $K$-theory groups.  We define $A^{p,*}_{1}(K):=K^{*}_{\lambda}(X^{p})$, $E^{p,*}_{1}(K):=K^{*}_{\lambda}(X^{p},X^{p-1})$, and denote the associated bigraded groups by
	\[
	A_{1}(K):=\bigoplus_{p\in\ZZ,\,*\in\ZZ_{2}}A^{p,*}_{1}(K),\hspace{7mm}E_{1}(K):=\bigoplus_{p\in\ZZ,\,*\in\ZZ_{2}}E^{p,*}_{1}(K).
	\]
	From the maps in the exact sequence \eqref{les}, we obtain:
	\begin{enumerate}
		\item $\alpha_{1}(K):A_{1}(K)\rightarrow A_{1}(K)$ of bi-degree $(-1,0)$ induced by $\rest_{*}$,
		\item $\beta_{1}(K):A_{1}(K)\rightarrow E_{1}(K)$ of bi-degree $(0,0)$ induced by $\ext_{*}$, and
		\item $\gamma_{1}(K):E_{1}(K)\rightarrow A_{1}(K)$ of bi-degree $(1,1)$ induced by $\partial_{*}$,
	\end{enumerate}
	and exactness of \eqref{les} therefore guarantees that 
	\begin{equation}\label{ec}
	\begin{tikzcd}
	A_{1}(K) \ar[rr,"\alpha_{1}(K)"] & & A_{1}(K) \ar[dl,"\beta_{1}(K)"] \\ & E_{1}(K) \ar[ul, "\gamma_{1}(K)"] &
	\end{tikzcd}
	\end{equation}
	is an exact couple.  The Atiyah-Hirzebruch spectral sequence $\{E_{r}^{p,*}(K)\}$ is now obtained from the derived couples of the exact couple \eqref{ec}, with differentials $d_{r}:E^{p,*}_{r}(K)\rightarrow E_{r}^{p+r,*+1}(K)$ of bi-degree $(r,1)$.  Due to the fact that $X^{p} = X$ for $p$ sufficiently large, this spectral sequence stabilises with $E_{\infty}(K)$ term
	\[
	E_{\infty}^{p,*}(K) = \FF^{p}K^{p\bmod2+*}_{\lambda}(X)/\FF^{p+1}K^{p\bmod2+*}_{\lambda}(X),
	\]
	hence converges to $K^{*}_{\lambda}(X)$ as claimed.
	
	Finally let us show that the $E_{2}(K)$ term is as claimed.  We have that
	\[
	E_{1}^{p,*}(K) = K^{*}_{\lambda}(X^{p},X^{p-1})
	\]
	is the homotopy classes of sections of the $\Omega^{*}Fred_{\cO_{\infty}\otimes\cK}$-bundle $F^{*}_{\lambda}$ over $X^{p}-X^{p-1}$ which are the identity outside some compact set.  Now $X^{p}- X^{p-1}$ consists precisely of the interiors of the $p$-cells of $X$, and since the interior of any $p$-cell is contractible $F^{*}_{\lambda}|_{X^{p}-X^{p-1}}$ is trivialisable.  Therefore $K^{*}_{\lambda}(X^{p},X^{p-1})$ coincides with the ordinary $K$-theory $K^{*}(X^{p},X^{p-1})$ of the pair $(X^{p},X^{p-1})$, and the ordinary relative Chern character restricts thereon to an isomorphism $K^{\mathbf{0}}(X^{p},X^{p-1})\cong H^{even}(X^{p},X^{p-1};\ZZ)$ with relative singular cohomology with coefficients in $\ZZ$.  Naturality of the Chern character as a transformation of (generalised) cohomology functors then says that $d_{1}(K)$ is the cellular coboundary, giving the result.
\end{proof}

\subsection{Singular de Rham theory}

To show that the Chern character for higher twisted $K$-theory is a real isomorphism, we must consider twisted cohomology for CW complexes.  Since the Chern character form we constructed earlier on the $Fred_{\cO_{\infty}\otimes\cK}$-bundle $Y_{2k+1}$ over $S^{2k+1}$ is a differential form, in constructing the Chern character for a CW complex with twist $(X,\lambda)$, we must use a cohomology theory which is simultaneously compatible with CW complexes and with differential forms.  As pointed out by Atiyah-Segal \cite{AS1}, \emph{singular de Rham} theory is perfect for this role.  We review this theory for the reader's convenience here in the convenient category of \emph{diffeological spaces}, whose definition we now recall following \cite[Definition 1.5]{IZ}.

\begin{definition}
	Let $X$ be a set.  A \textbf{parametrisation} is a function $\varphi:U\rightarrow X$, where $U$ is some open subset in some finite-dimensional Euclidean space.  A \textbf{diffeology} on $X$ is a set D of parametrisations for which the following hold.
	\begin{enumerate}
		\item Every constant parametrisation is contained in D.
		\item If $\varphi:U\rightarrow X$ is a parametrisation such that every $u\in U$ admits an open neighbourhood $V\subset U$ for which $\varphi|_{V}:V\rightarrow X$ is an element of D, then $\varphi$ is an element of D.
		\item If $\varphi:U\rightarrow X$ is an element of D, $V$ is any open subset in some finite-dimensional Euclidean space and $f:V\rightarrow U$ is a smooth function, then $\varphi\circ f:V\rightarrow X$ is contained in D.
	\end{enumerate}
	A set with a diffeology is called a \textbf{diffeological space}, and the elements of the diffeology are called \textbf{plots}.  A function $f:X\rightarrow Y$ of diffeological spaces is said to be \textbf{smooth} if, for every plot $\varphi:U\rightarrow X$ of $X$, the composite $f\circ\varphi:U\rightarrow Y$ is a plot of $Y$.  Any subset $S$ of a diffeological space $X$ can be equipped with the \textbf{subspace diffeology}, for which a parametrisation $\varphi:U\rightarrow S$ is a plot if and only if its composition with the inclusion $S\hookrightarrow X$ is a plot of $X$.
\end{definition}

Diffeological spaces were originally introduced by Soriau \cite{soriau}, who was motivated by problems in quantisation.  They are vast generalisations of manifolds, and are used to apply tools of differential geometry to spaces that are not well-behaved by traditional standards, for instance orbifolds \cite{IKZ} and, increasingly, leaf spaces of foliations \cite{HMVSC, GV, mac1, mac2}.  Here we choose to work in the diffeological category out of convenience - it allows us to use a single language to speak of the singular de Rham theory of both topological spaces and manifolds as we will now describe.

Any topological space can be equipped with a diffeology, called its \emph{continuous diffeology} \cite[Section 3]{CSW}, whose plots are all those parametrisations which are continuous.  Smooth maps between topological spaces $X$ and $Y$ with the continuous diffeology are precisely the continuous maps $X\rightarrow Y$.  Moreover, any manifold has a standard diffeology whose plots are all those parametrisations which are smooth in the manifold sense, and a diffeologically smooth map of manifolds is the same thing as a smooth map in the usual sense.  In particular, every geometric simplex
\[
\Delta^{n}:=\{(x_{1},\dots,x_{n})\in\RR^{n}:0\leq x_{1}\leq\cdots\leq x_{n}\leq 1\}
\]
is a diffeological subspace of Euclidean space.  The singular de Rham complex of a diffeological space is defined as follows.

\begin{definition}\label{singdr}
	Let $X$ be a diffeological space, and let $k\geq0$.  A \textbf{singular simplex} is a diffeologically smooth map $\sigma:\Delta^{l}\rightarrow X$.  A \textbf{singular $k$-form} $\omega$ on $X$ is a map which assigns to each singular simplex $\sigma:\Delta^{l}\rightarrow X$ a differential $k$-form $\omega(\sigma)\in\Omega^{k}(\Delta^{l})$, subject to the constraint that the restriction of $\omega(\sigma)$ to the $i^{th}$ face of the geometric simplex $\Delta^{l}$ is equal to $\omega(\epsilon_{i}(\sigma))$, where $\epsilon_{i}$ is the $i^{th}$ face map.  The set $\Omega^{k}_{sing}(X)$ of all singular $k$-forms is a real vector space under componentwise operations:
	\[
	(\alpha\omega+\beta\kappa)(\sigma):=\alpha\omega(\sigma)+\beta\kappa(\sigma),\hspace{7mm}\alpha,\beta\in\RR,\,\omega,\kappa\in\Omega^{k}_{sing}(X).
	\]
	The direct sum $\Omega^{*}_{sing}(X):=\bigoplus_{k\geq0}\Omega^{k}_{sing}(X)$ is a differential graded algebra under pointwise operations:
	\[
	(d\omega)(\sigma):=d\omega(\sigma),\hspace{7mm}(\omega\wedge\kappa)(\sigma):=\omega(\sigma)\wedge\kappa(\sigma),\,\hspace{7mm}\omega,\kappa\in\Omega^{*}(X).
	\]
	The differential graded algebra $\Omega^{*}_{sing}(X)$ is called the \textbf{singular de Rham complex} of $X$, and its cohomology denoted $H^{*}_{sing-dR}(X)$.
\end{definition}

For manifolds, singular de Rham cohomology is related to ordinary de Rham cohomology by a pullback map.  For a proof we refer the reader to \cite[Theorem 2.4]{kuri}.

\begin{theorem}\label{iso2}
	Let $X$ be a manifold.  Then the map $\alpha:\Omega^{*}(X)\rightarrow\Omega^{*}_{sing}(X)$ defined by
	\[
	\alpha(\omega)(\sigma):=\sigma^{*}\omega
	\]
	is a quasi-isomorphism.\qed
\end{theorem}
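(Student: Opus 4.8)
The plan is to factor $\alpha$ through the (smooth) singular cochain complex and then apply a two-out-of-three argument. Since $X$ is a manifold, a singular simplex in the sense of Definition \ref{singdr} is exactly a smooth map $\Delta^{l}\to X$; write $C^{*}(X)$ for the complex of $\RR$-valued cochains on the resulting (smooth) singular simplicial set, and define the \emph{integration map} $I\colon\Omega^{*}_{sing}(X)\to C^{*}(X)$ by $I(\omega)(\sigma):=\int_{\Delta^{l}}\omega(\sigma)$ for $\sigma\colon\Delta^{l}\to X$. First I would check that $I$ is a morphism of cochain complexes. By definition $(d\omega)(\sigma)=d\bigl(\omega(\sigma)\bigr)$, so Stokes' theorem on $\Delta^{l}$ gives $\int_{\Delta^{l}}d\bigl(\omega(\sigma)\bigr)=\int_{\partial\Delta^{l}}\omega(\sigma)$; the face-compatibility constraint built into Definition \ref{singdr} identifies the restriction of $\omega(\sigma)$ to the $i^{th}$ face of $\Delta^{l}$ with $\omega(\epsilon_{i}\sigma)$, so the right-hand side is precisely the simplicial coboundary $(\delta I\omega)(\sigma)$. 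This is the single place where the compatibility constraint is essential.

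Next I would identify the two composites. The composite $I\circ\alpha\colon\Omega^{*}(X)\to C^{*}(X)$ is the classical de Rham cochain map $\omega\mapsto\bigl(\sigma\mapsto\int_{\Delta^{l}}\sigma^{*}\omega\bigr)$; by de Rham's theorem, together with the standard fact that smooth singular cohomology of a manifold agrees with $H^{*}(X;\RR)$, this composite is a quasi-isomorphism. The remaining, and main, point is that $I$ itself is a quasi-isomorphism, i.e.\ that the singular de Rham complex computes real singular cohomology; this is Dupont's simplicial de Rham theorem, and citing it finishes the proof, since on cohomology $\alpha=I^{-1}\circ(I\circ\alpha)$. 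For a self-contained argument I would use the method of acyclic models over the category of smooth geometric simplices: the singular chain functor is free on the models $\Delta^{n}$, each $\Omega^{*}_{sing}(\Delta^{n})$ is acyclic (a Poincaré lemma for the simplex, proved by a prism operator on singular forms coming from the straight-line contraction of the convex set $\Delta^{n}$, which is manifestly compatible with faces), and $I$ is a natural transformation inducing an isomorphism on the models; acyclic models then produces a natural chain homotopy inverse to $I$, so $I$ is a quasi-isomorphism on every diffeological space, $X$ included.

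Combining the last two steps, $I$ and $I\circ\alpha$ are quasi-isomorphisms, hence so is $\alpha$, which is the assertion. The genuinely non-formal ingredient is the simplex Poincaré lemma (equivalently, a citation of Dupont's theorem); the cochain-map property of $I$ is just Stokes, the identification of $I\circ\alpha$ with the de Rham map is immediate, and the final deduction is formal. I expect the main obstacle in a self-contained write-up to be organising the acyclic-models comparison so that the homotopies are natural in the diffeological variable: $\Omega^{*}_{sing}$ is only a presheaf and not a sheaf on open subsets of $X$ (piecewise-smooth-compatible-on-faces forms do not glue to smooth ones), so a naive Mayer--Vietoris induction over a good cover is unavailable and one really does need the models argument.
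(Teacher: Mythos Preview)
Your proposal is correct. The paper does not actually supply a proof of this statement: it is stated with a terminal \qed and the line preceding it reads ``For a proof we refer the reader to \cite[Theorem 2.4]{kuri}.'' So there is nothing to compare at the level of argument; the paper simply imports the result from Kuribayashi.

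That said, your strategy is exactly the natural one and is consistent with how the paper organises the surrounding material. The integration map you call $I$ is precisely the map $\int$ the paper introduces in Theorem \ref{iso} (again cited from \cite{kuri}), and your two-out-of-three reduction amounts to deducing Theorem \ref{iso2} from Theorem \ref{iso} together with the classical de Rham theorem for smooth singular cochains. Your sketch of why $I$ is a quasi-isomorphism via acyclic models (the Dupont/Sullivan argument, with the Poincar\'e lemma on $\Delta^{n}$ as the only analytic input) is the standard route, and your remark that $\Omega^{*}_{sing}$ is not a sheaf, so a naive Mayer--Vietoris induction is unavailable, is a correct diagnosis of why the models argument is the right tool. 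In short: you have supplied a proof where the paper supplies a citation, and your proof is the expected one.
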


There is a diffeological de Rham theorem \cite[Corollary 2.5]{kuri}, which relates singular de Rham theory to singular theory.

\begin{theorem}\label{iso}
	Let $X$ be a diffeological space, and denote by $C^{*}(X)$ the complex of smooth singular cochains of $X$.  Then the integration map $\int:\Omega^{*}_{sing}(X)\rightarrow C^{*}(X)$ defined by
	\[
	\bigg(\int\omega\bigg)(\sigma:\Delta^{l}\rightarrow X):=\int_{\Delta^{l}}\omega(\sigma),\hspace{7mm}\omega\in\Omega^{*}_{sing}(X),
	\]
	is a cochain map, and is moreover a quasi-isomorphism, inducing an isomorphism $H^{*}_{sing-dR}(X)\cong H^{*}(X;\RR)$ on cohomology.\qed
\end{theorem}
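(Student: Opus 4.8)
The statement is the \emph{diffeological de Rham theorem}, and the plan is to reduce it to the classical simplicial de Rham theorem of Dupont. First, that $\int$ is a cochain map is Stokes' theorem: for a singular $k$-form $\omega$ and a smooth $(k{+}1)$-simplex $\sigma\colon\Delta^{k+1}\to X$, the face constraint in Definition \ref{singdr} identifies $\omega(\epsilon_i(\sigma))$ with the restriction of $\omega(\sigma)$ to the $i$-th face of $\Delta^{k+1}$, so that
\[
(\delta\textstyle\int\omega)(\sigma)=\sum_{i=0}^{k+1}(-1)^i\int_{\Delta^{k}}\omega(\epsilon_i(\sigma))=\int_{\partial\Delta^{k+1}}\omega(\sigma)=\int_{\Delta^{k+1}}d(\omega(\sigma))=(\textstyle\int d\omega)(\sigma).
\]
Second, note that both $\Omega^*_{sing}(X)$ and $C^*(X)$ depend only on the simplicial set $S_\bullet(X)$ whose $n$-simplices are the smooth maps $\Delta^n\to X$: by definition $C^*(X)$ is the smooth singular cochain complex of $S_\bullet(X)$ with real coefficients, and $H^*(X;\RR)$ in the statement \emph{means} its cohomology, while $\Omega^*_{sing}(X)$ is precisely the complex of compatible families $(\omega(\sigma))_{\sigma}$ of smooth forms on the geometric simplices of $S_\bullet(X)$. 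Hence it suffices to show that for an arbitrary simplicial set $K$ the integration map $\int\colon\Omega^*(K)\to C^*(K;\RR)$ is a quasi-isomorphism.

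For the latter I would produce an explicit homotopy inverse. Let $W\colon C^*(K;\RR)\to\Omega^*(K)$ be the \emph{Whitney map}, defined on each geometric simplex $\Delta^n$ with barycentric coordinates $t_0,\dots,t_n$ by sending the elementary cochain dual to a face $[i_0<\cdots<i_k]$ to the Whitney form
\[
W_{[i_0\cdots i_k]}=k!\sum_{j=0}^k(-1)^j\,t_{i_j}\,dt_{i_0}\wedge\cdots\wedge\widehat{dt_{i_j}}\wedge\cdots\wedge dt_{i_k},
\]
extended linearly and glued over $S_\bullet(X)$; since these forms restrict correctly to faces, $W$ is a well-defined natural cochain map, and a direct computation of $\int_{[i_0\cdots i_k]}W_{[j_0\cdots j_k]}$ shows $\int\circ W=\mathrm{id}$ (on the normalized subcomplex, whose inclusion into $C^*$ is a quasi-isomorphism). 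It then remains to check $W\circ\int\simeq\mathrm{id}$. I would do this by filtering $\Omega^*(K)$ and $C^*(K;\RR)$ by the skeleta $K^{(n)}$ of $K$, where on each associated graded piece the comparison reduces, via the Poincaré lemma and Theorem \ref{iso2}, to the de Rham theorem for the simplices $\Delta^n$ relative to their boundaries, and then running an induction over $n$ together with the Milnor exact sequence to pass to the inverse limit. Equivalently, one can appeal to Dupont's explicit natural cochain homotopy $s$ on the $\Omega^*(\Delta^n)$ with $ds+sd=\mathrm{id}-W\!\int$; either way $\int$ becomes a cochain homotopy equivalence and Theorem \ref{iso} follows. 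Combined with Theorem \ref{iso2}, this recovers the usual de Rham isomorphism for manifolds.

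The main obstacle is naturality/compatibility across faces. The naive fibrewise Poincaré-lemma contraction on a single $\Delta^n$ (coning toward a vertex or the barycenter) does \emph{not} commute with the face restrictions, so one cannot simply assemble the homotopies $s$ simplexwise; Dupont's fix is an iterated, symmetrised construction, while in the skeletal-induction approach the work is in gluing the relative Poincaré lemma over all $n$-cells at once. The underlying structural reason is that, unlike $C^*$, the functor $\Omega^*_{sing}$ is not corepresentable on the simplices --- it is only the subfunctor of $\prod_n\mathrm{Maps}\big(\mathrm{Hom}(\Delta^n,-),\Omega^*(\Delta^n)\big)$ cut out by the face-compatibility conditions --- so the comparison must be built \emph{out of} the corepresentable side $C^*$ via $W$ rather than the other way around. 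This is exactly what is carried out in \cite[Corollary 2.5]{kuri}, to which we refer for the remaining details.
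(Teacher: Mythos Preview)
The paper does not prove this theorem at all: it is stated with a \qed and attributed to Kuribayashi \cite[Corollary 2.5]{kuri} as a known result. Your proposal goes well beyond this by sketching the actual argument---the reduction to the simplicial setting, the Whitney map $W$ as a one-sided inverse to $\int$, and Dupont's natural homotopy $ds+sd=\mathrm{id}-W\!\int$---which is indeed the standard route and is correct. Your closing citation to the same reference matches the paper's treatment, so there is no conflict; you have simply supplied the content that the paper chose to black-box.
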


In particular, the singular de Rham cohomology of a topological space is isomorphic to the ordinary singular cohomology with real coefficients.  The singular de Rham complexes of diffeological spaces are functorial under smooth maps.

\begin{proposition}\label{compat}
	Let $f:X\rightarrow Y$ be a smooth map of diffeological spaces.  Then the formula
	\[
	(f^{*}\omega)(\sigma):=\omega(f\circ\sigma),\hspace{7mm}\omega\in\Omega^{*}_{sing}(Y)
	\]
	defines a map $f^{*}:\Omega^{*}_{sing}(Y)\rightarrow\Omega^{*}_{sing}(X)$ of cochain complexes.  In particular, if $X$ and $Y$ are manifolds, then the diagram
	\begin{center}
		\begin{tikzcd}
		\Omega^{*}(Y) \ar[r,"\alpha"] \ar[d,"g^{*}"] & \Omega^{*}_{sing}(Y) \ar[d,"g^{*}"] \\ \Omega^{*}(X) \ar[r,"\alpha"] & \Omega^{*}_{sing}(X)
		\end{tikzcd}
	\end{center}
	commutes.
\end{proposition}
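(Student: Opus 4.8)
The plan is to verify both assertions directly from Definition \ref{singdr}. For the first, I would begin by checking that $f^{*}\omega$ is a well-defined singular form on $X$. Given a singular simplex $\sigma:\Delta^{l}\rightarrow X$, smoothness of $f$ and of $\sigma$, together with the fact (immediate from the definition of smoothness) that composites of smooth maps of diffeological spaces are smooth, guarantees that $f\circ\sigma:\Delta^{l}\rightarrow Y$ is again a singular simplex of $Y$, so $\omega(f\circ\sigma)$ is a genuine $k$-form on $\Delta^{l}$ and $(f^{*}\omega)(\sigma)$ lies in the correct space. The only point requiring an argument is the face-compatibility constraint of Definition \ref{singdr}: writing $\epsilon_{i}$ for the $i^{th}$ face map, one has $f\circ\epsilon_{i}(\sigma) = \epsilon_{i}(f\circ\sigma)$, since both sides are $f\circ\sigma$ precomposed with the inclusion of the $i^{th}$ face of $\Delta^{l}$; hence the restriction of $(f^{*}\omega)(\sigma) = \omega(f\circ\sigma)$ to the $i^{th}$ face equals $\omega(\epsilon_{i}(f\circ\sigma)) = \omega(f\circ\epsilon_{i}(\sigma)) = (f^{*}\omega)(\epsilon_{i}(\sigma))$, the first equality being the face-compatibility of $\omega$ itself. $\RR$-linearity of $f^{*}$ is immediate from the componentwise vector space structure on singular forms, and compatibility with the differential follows from the computation $(f^{*}(d\omega))(\sigma) = (d\omega)(f\circ\sigma) = d\bigl(\omega(f\circ\sigma)\bigr) = d\bigl((f^{*}\omega)(\sigma)\bigr) = \bigl(d(f^{*}\omega)\bigr)(\sigma)$, using that $d$ on $\Omega^{*}_{sing}$ is likewise defined plotwise; thus $f^{*}$ is a map of cochain complexes (indeed of differential graded algebras, the argument for the wedge product being identical).

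For the second assertion, let $f:X\rightarrow Y$ be a smooth map of manifolds (the vertical maps of the square are then $f^{*}$), regarded as a smooth map of diffeological spaces for their standard diffeologies. Fix $\omega\in\Omega^{*}(Y)$ and a singular simplex $\sigma:\Delta^{l}\rightarrow X$; since $\sigma$ is a plot of $X$ it is smooth in the manifold sense, hence $f\circ\sigma:\Delta^{l}\rightarrow Y$ is smooth and the ordinary pullback $(f\circ\sigma)^{*}\omega\in\Omega^{*}(\Delta^{l})$ is defined. Then $\bigl(\alpha(f^{*}\omega)\bigr)(\sigma) = \sigma^{*}(f^{*}\omega)$ by the definition of $\alpha$ in Theorem \ref{iso2}, while $\bigl(f^{*}(\alpha\omega)\bigr)(\sigma) = (\alpha\omega)(f\circ\sigma) = (f\circ\sigma)^{*}\omega$; the two coincide by the functoriality of the pullback of differential forms on manifolds, i.e. $\sigma^{*}\circ f^{*} = (f\circ\sigma)^{*}$. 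As $\sigma$ was arbitrary, $\alpha\circ f^{*} = f^{*}\circ\alpha$ and the square commutes.

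I do not expect a genuine obstacle here: the argument is pure bookkeeping with face maps together with the functoriality of pullbacks of forms. The one place to take mild care is that the geometric simplices $\Delta^{l}$ are manifolds with corners rather than boundaryless manifolds, so that ``smooth map out of $\Delta^{l}$'' and ``pullback of a form to $\Delta^{l}$'' should be read in the manifold-with-corners sense; this is exactly the convention already implicit in Definition \ref{singdr} and Theorem \ref{iso2}, so no additional work is required.
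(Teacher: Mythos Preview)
Your proof is correct and follows essentially the same approach as the paper. The paper simply declares the first part ``clear'' and then for the second part computes $g^{*}(\alpha(\omega))(\sigma) = \alpha(\omega)(g\circ\sigma) = (g\circ\sigma)^{*}\omega = \sigma^{*}(g^{*}\omega) = \alpha(g^{*}\omega)(\sigma)$, which is exactly your second-paragraph calculation; your more careful verification of the face-compatibility and cochain-map conditions in the first part only makes explicit what the paper leaves to the reader.
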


\begin{proof}
	The first part is clear.  For the second, simply fix $\omega\in\Omega^{*}(Y)$ and observe that
	\[
	g^{*}(\alpha(\omega))(\sigma) = \alpha(\omega)(g\circ\sigma) = (g\circ\sigma)^{*}\omega = \sigma^{*}(g^{*}\omega) = \alpha(g^{*}\omega)(\sigma)
	\]
	for any smooth singular simplex $\sigma$ of $X$.
\end{proof}

We have the following crucial lemma, which relies on the extendability property \cite[Lemma 9.4]{GM} of the differential forms on geometric simplices.

\begin{lemma}\label{technical}
	Let $f:X\rightarrow Y$ be an injective smooth map of diffeological spaces.  Then $f^{*}:\Omega^{*}_{sing}(Y)\rightarrow\Omega^{*}_{sing}(X)$ is surjective.
\end{lemma}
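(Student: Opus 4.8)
The plan is to show surjectivity of $f^{*}$ by \emph{extending} singular forms along $f$: given $\omega\in\Omega^{k}_{sing}(X)$, I will build a singular $k$-form $\tilde{\omega}\in\Omega^{k}_{sing}(Y)$ with $f^{*}\tilde{\omega}=\omega$, defining $\tilde{\omega}(\tau)\in\Omega^{k}(\Delta^{l})$ on each smooth singular simplex $\tau:\Delta^{l}\rightarrow Y$ by induction on $l$. Two facts drive the argument. First, because $f$ is injective, whenever a smooth singular simplex $\tau$ of $Y$ can be written as $f\circ\sigma$ for a plot $\sigma:\Delta^{l}\rightarrow X$, the simplex $\sigma$ is the unique set-theoretic lift and hence unambiguously determined; this is precisely what makes the prescription $\tilde{\omega}(f\circ\sigma):=\omega(\sigma)$ well defined (it would fail for non-injective $f$). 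Second, the only constraint on a singular form (Definition \ref{singdr}) is the face constraint, so once $\tilde{\omega}$ is known on all simplices of dimension $<l$, the forms on the faces of an $l$-simplex $\tau$ already prescribe a \emph{compatible family} of forms on $\partial\Delta^{l}$, and by the extendability property \cite[Lemma 9.4]{GM} of differential forms on geometric simplices any such family extends to a form on $\Delta^{l}$.

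In detail: for $l=0$ there is no face constraint, so set $\tilde{\omega}(\tau):=\omega(\sigma)$ if $\tau=f\circ\sigma$ for the unique such plot $\sigma$, and $\tilde{\omega}(\tau):=0$ otherwise. Assume $\tilde{\omega}$ has been defined on all smooth singular simplices of dimension $<l$ so that it satisfies the face constraint and so that $\tilde{\omega}(f\circ\sigma)=\omega(\sigma)$ for every plot $\sigma$ of $X$. Fix a smooth singular $l$-simplex $\tau$ of $Y$. By the inductive hypothesis together with the simplicial identities $\epsilon_{i}\epsilon_{j}=\epsilon_{j-1}\epsilon_{i}$ for $i<j$, the forms $\tilde{\omega}(\epsilon_{i}\tau)\in\Omega^{k}(\Delta^{l-1})$, $i=0,\dots,l$, agree on the codimension-two faces, i.e.\ form a compatible family on $\partial\Delta^{l}$. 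If $\tau$ does not factor through $f$ as $f\circ\sigma$ with $\sigma$ a plot, let $\tilde{\omega}(\tau)$ be any extension of this family to $\Delta^{l}$, which exists by \cite[Lemma 9.4]{GM}. If $\tau=f\circ\sigma$ with $\sigma$ the (unique) plot of $X$ of dimension $l$, set $\tilde{\omega}(\tau):=\omega(\sigma)$; this is consistent with the required boundary values because $\epsilon_{i}\tau=f\circ\epsilon_{i}\sigma$, so the inductive hypothesis gives $\tilde{\omega}(\epsilon_{i}\tau)=\omega(\epsilon_{i}\sigma)$, which by the face constraint on $\omega$ is exactly the restriction of $\omega(\sigma)$ to the $i^{th}$ face. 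In both cases $\tilde{\omega}(\tau)$ restricts correctly on faces, and $\tilde{\omega}(f\circ\sigma)=\omega(\sigma)$ continues to hold, so the induction proceeds.

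The resulting $\tilde{\omega}$ is a singular $k$-form on $Y$ by construction, and for every plot $\sigma$ of $X$ we have $(f^{*}\tilde{\omega})(\sigma)=\tilde{\omega}(f\circ\sigma)=\omega(\sigma)$, so $f^{*}\tilde{\omega}=\omega$ and $f^{*}$ is surjective. The part that needs the most care, and which I expect to be the main obstacle, is verifying that the face values $\tilde{\omega}(\epsilon_{i}\tau)$ genuinely assemble into a compatible system on $\partial\Delta^{l}$ so that \cite[Lemma 9.4]{GM} applies — this is pure simplicial bookkeeping but must be done honestly — together with the point that "$\tau$ factors through $f$" has to mean "$\tau=f\circ\sigma$ for some \emph{plot} $\sigma$ of $X$" (a set-theoretic lift need not be a plot), injectivity of $f$ guaranteeing uniqueness of such a $\sigma$ when it exists and hence unambiguity of the definition. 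There is no clash between the two cases of the inductive step, since a non-factoring simplex is assigned its value once and never revisited, and all constraints on that value come from its faces, which are already encoded.
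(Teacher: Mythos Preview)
Your proof is correct and follows essentially the same approach as the paper: both arguments define $\tilde{\omega}(\tau):=\omega(\sigma)$ on simplices factoring (uniquely, by injectivity) through $f$, and otherwise invoke the extendability property \cite[Lemma 9.4]{GM} to extend the already-defined boundary data. Your global induction on the dimension $l$ of singular simplices is a slightly tidier organisation than the paper's per-simplex induction over faces, and your explicit verification that $\epsilon_{i}(f\circ\sigma)=f\circ\epsilon_{i}\sigma$ forces the boundary values to match is a detail the paper leaves implicit.
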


\begin{proof}
	Fix $\omega\in\Omega^{*}_{sing}(X)$.  We must construct a singular form $\tilde{\omega}\in\Omega^{*}_{sing}(Y)$ such that $f^{*}\tilde{\omega} = \omega$.  Let $\tilde{\sigma}:\Delta^{n}\rightarrow Y$ be a smooth singular simplex.  If $\tilde{\sigma} = f\circ\sigma$ for some singular simplex $\sigma$ of $X$, then by injectivity of $f$, $\sigma$ is uniquely characterised by this property and we simply define
	\[
	\tilde{\omega}(\tilde{\sigma}):=\omega(f\circ\sigma).
	\]
	In general, while $\tilde{\sigma}$ may not itself be of the form $f\circ\sigma$, it may have faces which \emph{are} of this form.  To take care of this subtlety we must in general construct $\tilde{\omega}$ inductively.
	
	Let $v_{0},\dots,v_{n}$ be the vertices of $\Delta^{n}$.  We define
	\[
	\tilde{\omega}([v_{i}]):=\begin{cases}
	\omega(\sigma_{i})&\text{ if $\tilde{\sigma}|_{v_{i}} = f\circ\sigma_{i}$ for some 0-simplex $\sigma_{i}$ in $X$}\\ 0&\text{ otherwise}.
	\end{cases}
	\]
	Given a multi-index $I_{k}:=(i_{1},\dots,i_{k})$, denote by $[v_{I_{k}}]$ the corresponding $(k-1)$-face
	\[
	[v_{I_{k}}]:=[v_{i_{1}},\dots v_{i_{k}}],
	\]
	and for $1\leq j\leq k$ denote by $[\epsilon_{j}v_{I_{k}}]$ the $j^{th}$ face of $[v_{I_{k}}]$.  Suppose inductively that we have forms $\tilde{\omega}([v_{I_{k}}])$ defined on all the $(k-1)$-faces of $\tilde{\sigma}$ which agree on intersections, and given any $k$-face $[v_{I_{k+1}}]$ of $\tilde{\sigma}$ denote by
	\[
	\ext\big(\tilde{\omega}([\epsilon_{1}v_{I_{k+1}}]),\dots,\tilde{\omega}([\epsilon_{k+1}v_{I_{k+1}}])\big)
	\]
	the form on $v_{I_{k+1}}$ extending the $\tilde{\omega}_{\epsilon_{i}v_{I_{k+1}}}$ obtained via \cite[Lemma 9.4]{GM}.  Then we define
	\[
	\tilde{\omega}([v_{I_{k+1}}]):=\begin{cases}
	\omega(\sigma_{I_{k+1}})&\text{ if $\tilde{\sigma}|_{[v_{I_{k+1}}]} = f\circ\sigma_{I_{k+1}}$, for $\sigma_{I_{k+1}}:\Delta^{k}\rightarrow X$}\\\ext(\tilde{\omega}([\epsilon_{1}v_{I_{k+1}}]),\dots,\tilde{\omega}([\epsilon_{k+1}v_{I_{k+1}}])&\text{ otherwise}
	\end{cases}
	\]
	to obtain a family of forms $\tilde{\omega}([v_{I_{k+1}}])$ associated to the $k$-faces of $\tilde{\sigma}$.  After $n$ steps, we obtain the desired form $\tilde{\omega}(\tilde{\sigma})\in\Omega^{*}(\Delta^{n})$, and if in particular $\tilde{\sigma} = f\circ\sigma$ for some simplex $\sigma:\Delta^{n}\rightarrow X$, then $\tilde{\omega}(\tilde{\sigma}) = \omega(\sigma)$ by construction.
	
	Doing this construction for each singular simplex $\tilde{\sigma}$ of $Y$, we obtain a singular form $\tilde{\omega}\in\Omega^{*}_{sing}(Y)$, and if $\sigma:\Delta^{n}\rightarrow X$ is any singular simplex then
	\[
	(f^{*}\tilde{\omega})(\sigma) = \tilde{\omega}(f\circ\sigma) = \omega(\sigma).
	\]
	Thus $f^{*}$ is surjective as claimed.
\end{proof}

Theorem \ref{iso} implies a similar isomorphism for relative cohomology groups, which we now define.

\begin{definition}
	Let $X$ be a diffeological space, and let $A\subset X$ be a diffeological subspace.  We denote by $\Omega^{*}_{sing}(X,A)$ the differential graded subalgebra of $\Omega^{*}_{sing}(X)$ consisting of singular forms $\omega$ such that $\omega(\sigma) = 0$ whenever $\sigma:\Delta^{l}\rightarrow A$ is a singular simplex of $A$.  The cohomology of $\Omega^{*}_{sing}(X,A)$ is denoted $H^{*}_{sing-dR}(X,A)$.
\end{definition}

If $X$ is any diffeological space and $A$ any diffeological subspace, then the restriction map from $\Omega^{*}_{sing}(X)$ to $\Omega^{*}_{sing}(A)$ is surjective by Lemma \ref{technical}, and lifts of singular forms in $A$ to singular forms in $X$ are unique up to elements of $\Omega^{*}_{sing}(X,A)$.  Thus $\Omega^{*}_{sing}(A)$ is isomorphic to the quotient $\Omega^{*}_{sing}(X)/\Omega^{*}_{sing}(X,A)$.  Therefore we have an exact sequence
\[
0\rightarrow\Omega^{*}_{sing}(X,A)\xrightarrow{\iota}\Omega^{*}_{sing}(X)\xrightarrow{\pi}\Omega^{*}_{sing}(A)\rightarrow 0
\]
of cochain complexes, where $\iota$ is the inclusion and where $\pi$ is restriction.

Now the integration map described in Theorem \ref{iso} sends $\Omega^{*}_{sing}(X,A)$ into the relative singular cochains $C^{*}(X,A;\RR)$ - that is, cochains which vanish on singular simplices which map into $A$.  We then have the commuting diagram
\begin{center}
	\begin{tikzcd}
	0 \ar[r] & \Omega^{*}_{sing}(X,A) \ar[d,"\int"] \ar[r,"\iota"] & \Omega^{*}_{sing}(X) \ar[d,"\int"] \ar[r,"\pi"] & \Omega^{*}_{sing}(A) \ar[r] \ar[d,"\int"] & 0 \\ 0 \ar[r] & C^{*}(X,A;\RR) \ar[r,"\iota"] & C^{*}(X;\RR) \ar[r,"\pi"] & C^{*}(A;\RR) \ar[r] & 0
	\end{tikzcd}
\end{center}
of cochain complexes.  Passing to the long exact sequences guaranteed by the zig zag lemma, we obtain commuting diagrams
\begin{center}
	\begin{tikzcd}
	H^{n}_{sing-dR}(X) \ar[r,"\pi_{*}"] \ar[d,"\int_{*}"] & H^{n}_{sing-dR}(A) \ar[r,"\partial_{*}"] \ar[d,"\int_{*}"] & H^{n+1}_{sing-dR}(X,A) \ar[r,"\iota_{*}"] \ar[d,"\int_{*}"] & H^{n+1}_{sing-dR}(X) \ar[r,"\pi_{*}"] \ar[d,"\int_{*}"] & H^{n+1}_{sing-dR}(A) \ar[d,"\int_{*}"] \\ H^{n}(X;\RR) \ar[r,"\pi_{*}"] & H^{n}(A;\RR) \ar[r,"\partial_{*}"] & H^{n+1}(X,A;\RR) \ar[r,"\iota_{*}"] & H^{n+1}(X;\RR) \ar[r,"\pi_{*}"] & H^{n+1}(A;\RR)
	\end{tikzcd}
\end{center}
for each $n$, with exact rows.  The five lemma together with Theorem \ref{iso} therefore proves the following.

\begin{proposition}
	For a diffeological space $X$ and any subspace $A$ of $X$, the integration map from singular de Rham forms to singular cochains induces an isomorphism $H^{*}_{sing-dR}(X,A)\cong H^{*}(X,A;\RR)$.\qed
\end{proposition}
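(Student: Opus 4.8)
The plan is to deduce the relative isomorphism formally from the absolute case, Theorem~\ref{iso}, by a five lemma argument applied to the commuting ladder of long exact sequences displayed just above. All of the genuine input has in fact already been assembled: Lemma~\ref{technical} guarantees that the restriction map $\Omega^{*}_{sing}(X)\to\Omega^{*}_{sing}(A)$ is surjective, which is precisely what makes
\[
0\to\Omega^{*}_{sing}(X,A)\xrightarrow{\iota}\Omega^{*}_{sing}(X)\xrightarrow{\pi}\Omega^{*}_{sing}(A)\to 0
\]
a short exact sequence of cochain complexes, and the integration map $\int$ manifestly carries forms vanishing on simplices mapping into $A$ to cochains vanishing on such simplices, hence is a morphism from this short exact sequence to the corresponding one in smooth singular cochains.

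First I would invoke naturality of the connecting homomorphism in the zig zag lemma: since $\int$ is a chain map of short exact sequences, the maps it induces on cohomology commute with the connecting homomorphisms $\partial_{*}$, so the ladder of long exact sequences genuinely commutes. This compatibility with the connecting maps is the only point in the argument that is not purely diagrammatic, and it requires nothing beyond the standard snake lemma. Next, Theorem~\ref{iso}, applied separately to the diffeological spaces $X$ and $A$, tells us that every vertical arrow of the ladder of the form $\int_{*}\colon H^{n}_{sing-dR}(X)\to H^{n}(X;\RR)$ or $\int_{*}\colon H^{n}_{sing-dR}(A)\to H^{n}(A;\RR)$ is an isomorphism for all $n$.

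Finally, fix a degree $n$ and look at the segment of the ladder centred at the relative term $\int_{*}\colon H^{n+1}_{sing-dR}(X,A)\to H^{n+1}(X,A;\RR)$. The four flanking vertical maps are the integration isomorphisms for $H^{n}(X)$, $H^{n}(A)$, $H^{n+1}(X)$ and $H^{n+1}(A)$, so the five lemma forces the central map to be an isomorphism as well. As $n$ was arbitrary, $\int_{*}\colon H^{*}_{sing-dR}(X,A)\to H^{*}(X,A;\RR)$ is an isomorphism in every degree, which is the claim. I do not expect any real obstacle at this stage: the one substantive ingredient, surjectivity of restriction on singular forms, was already secured in Lemma~\ref{technical} via the extendability of forms on geometric simplices, and what remains here is bookkeeping in homological algebra.
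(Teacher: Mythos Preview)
Your proposal is correct and matches the paper's argument essentially verbatim: the paper also sets up the short exact sequence of singular de Rham complexes (using Lemma~\ref{technical} for surjectivity of restriction), maps it via integration to the singular cochain short exact sequence, passes to the long exact sequences, and then invokes Theorem~\ref{iso} together with the five lemma. There is nothing to add.
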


We conclude this subsection by considering the relationship between the singular de Rham theories of a manifold when equipped with its standard manifold diffeology versus when it is equipped with its continuous diffeology.

\begin{proposition}\label{topsm}
	Let $X$ be a manifold, and denote by $X^{t}$ and $X^{m}$ the set $X$ regarded with its continuous diffeology and with its manifold diffeology respectively.  The identity map $\id:X^{m}\rightarrow X^{t}$ is smooth, and the induced map $\id^{*}:\Omega^{*}_{sing}(X^{t})\rightarrow\Omega^{*}_{sing}(X^{m})$ is a quasi-isomorphism.  Consequently, every closed element of $\Omega^{*}(X^{m})$ has a lift to a closed element of $\Omega^{*}_{sing}(X^{t})$.
\end{proposition}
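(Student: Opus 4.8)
The plan is to relate both singular de Rham complexes to singular cochain complexes via the integration maps of Theorem~\ref{iso}, thereby reducing the quasi-isomorphism assertion to the classical comparison between smooth and continuous singular cohomology of a manifold, and then to upgrade this to an honest closed lift using Lemma~\ref{technical} together with an acyclic-kernel argument.

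First I would check that $\id:X^{m}\rightarrow X^{t}$ is smooth: a plot of $X^{m}$ is by definition a parametrisation $\varphi:U\rightarrow X$ that is smooth in the manifold sense, hence continuous, hence a plot of $X^{t}$, and post-composing with $\id$ changes nothing; thus $\id$ carries plots to plots, and $\id^{*}$ is a cochain map by Proposition~\ref{compat}. Next I would observe that a smooth singular simplex of $X^{t}$ is precisely a continuous map $\Delta^{l}\rightarrow X$, whereas a smooth singular simplex of $X^{m}$ is a smooth map $\Delta^{l}\rightarrow X$ in the manifold sense; consequently the map induced by $\id$ on smooth singular cochains is the restriction $\id^{*}:C^{*}(X^{t};\RR)\rightarrow C^{*}(X^{m};\RR)$ dual to the inclusion of smooth singular chains into all singular chains. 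The integration maps of Theorem~\ref{iso} then fit into a commuting square
\begin{center}
\begin{tikzcd}
\Omega^{*}_{sing}(X^{t}) \ar[r,"\id^{*}"] \ar[d,"\int"] & \Omega^{*}_{sing}(X^{m}) \ar[d,"\int"] \\ C^{*}(X^{t};\RR) \ar[r,"\id^{*}"] & C^{*}(X^{m};\RR)
\end{tikzcd}
\end{center}
whose vertical arrows are quasi-isomorphisms by Theorem~\ref{iso}. The bottom arrow is a quasi-isomorphism because the inclusion of smooth singular chains into all singular chains of a manifold is a quasi-isomorphism (a classical consequence of smooth approximation of continuous maps) and, since $\RR$ is divisible and hence injective as a $\ZZ$-module, $\Hom(-,\RR)$ is exact and so preserves quasi-isomorphisms. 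It follows that the top arrow $\id^{*}:\Omega^{*}_{sing}(X^{t})\rightarrow\Omega^{*}_{sing}(X^{m})$ is a quasi-isomorphism.

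For the final assertion, fix a closed form $\omega\in\Omega^{*}(X^{m})$ and put $\alpha(\omega)\in\Omega^{*}_{sing}(X^{m})$, which is closed since $\alpha$ is a cochain map (Theorem~\ref{iso2}). Because $\id:X^{m}\rightarrow X^{t}$ is injective and smooth, Lemma~\ref{technical} yields $\tilde{\omega}_{0}\in\Omega^{*}_{sing}(X^{t})$ with $\id^{*}\tilde{\omega}_{0}=\alpha(\omega)$. Writing $K^{\bullet}:=\ker(\id^{*})$, the short exact sequence $0\rightarrow K^{\bullet}\rightarrow\Omega^{*}_{sing}(X^{t})\xrightarrow{\id^{*}}\Omega^{*}_{sing}(X^{m})\rightarrow0$ together with the fact that $\id^{*}$ is a quasi-isomorphism forces $K^{\bullet}$ to be acyclic, by the long exact cohomology sequence. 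Now $d\tilde{\omega}_{0}\in K^{\bullet}$, since $\id^{*}d\tilde{\omega}_{0}=d\alpha(\omega)=\alpha(d\omega)=0$, and $d\tilde{\omega}_{0}$ is a cocycle, so acyclicity of $K^{\bullet}$ provides $\kappa\in K^{\bullet}$ with $d\tilde{\omega}_{0}=d\kappa$; then $\tilde{\omega}:=\tilde{\omega}_{0}-\kappa$ is closed in $\Omega^{*}_{sing}(X^{t})$ and satisfies $\id^{*}\tilde{\omega}=\alpha(\omega)$, which is the asserted closed lift of $\omega$.

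I expect the main obstacle to be the bottom horizontal quasi-isomorphism, namely the fact that replacing smooth singular (co)chains on a manifold by continuous ones does not alter cohomology. This is classical, but it is the one input that genuinely uses the manifold structure of $X$ rather than purely formal diffeological considerations; everything else is bookkeeping with Theorem~\ref{iso}, Lemma~\ref{technical}, Theorem~\ref{iso2}, and elementary homological algebra.
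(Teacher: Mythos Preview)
Your proof is correct and follows essentially the same approach as the paper: the commuting square with the integration maps of Theorem~\ref{iso}, reducing to the classical comparison between smooth and continuous singular cochains on a manifold (the paper cites Warner for this), is exactly what the authors do. Your treatment is in fact more careful on two points: you spell out why $\id:X^{m}\rightarrow X^{t}$ is smooth, and you justify the dualisation step via injectivity of $\RR$ rather than leaving it implicit.

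For the closed-lift assertion the two arguments differ cosmetically. The paper first invokes the quasi-isomorphism to find a \emph{closed} $\kappa_{1}\in\Omega^{*}_{sing}(X^{t})$ with $\id^{*}\kappa_{1}$ cohomologous to $\omega$, then uses surjectivity (Lemma~\ref{technical}) to lift the primitive and subtract its differential. You instead first lift $\alpha(\omega)$ arbitrarily via Lemma~\ref{technical} and then correct using acyclicity of $\ker(\id^{*})$, which you deduce from the long exact sequence. These are two equivalent packagings of the same idea (surjective quasi-isomorphism $\Rightarrow$ closed lifts exist); your acyclic-kernel formulation is slightly more systematic and makes the homological mechanism transparent, while the paper's version is a touch more direct. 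Either way there is no gap.
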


\begin{proof}
	The first claim follows from commutativity of the diagram
	\begin{center}
		\begin{tikzcd}
		\Omega^{*}_{sing}(X^{t}) \ar[r,"\id^{*}"] \ar[d,"\int"] & \Omega^{*}_{sing}(X^{m}) \ar[d,"\int"] \\ C^{*}(X;\RR) \ar[r,"\id^{*}"] & C^{*}_{sm}(X;\RR),
		\end{tikzcd}
	\end{center}
	where $C^{*}_{sm}(X;\RR)$ denotes the cochain complex of singular cochains on \emph{smooth} singular simplices, and where the bottom $\id^{*}$ denotes restriction of cochains from continuous to smooth.  Since the bottom $\id^{*}$ is a quasi-isomorphism \cite[p. 205]{warner} and since both integration maps are quasi-isomorphisms by Theorem \ref{iso}, the top $\id^{*}$ is also a quasi-isomorphism.  The second claim then follows from injectivity of $\id$ together with Lemma \ref{technical}.  Indeed, if $\omega\in\Omega^{n}_{sing}(X^{m})$ is closed, then since $\id^{*}$ is a surjective quasi-isomorphism we can find closed $\kappa_{1}\in\Omega^{n}_{sing}(X^{t})$ and $\kappa_{2}\in\Omega^{n-1}_{sing}(X^{t})$ such that
	\[
	\id^{*}(\kappa_{1}) = \omega+d(\id^{*}\kappa_{2}).
	\]
	Then $\tilde{\omega}:=\kappa_{1}-d\kappa_{2}$ is a closed lift of $\omega$.
\end{proof}

\subsection{The higher twisted cohomology spectral sequence}

In contrast to the twisted cohomology spectral sequences considered in \cite{AS1} and \cite{LLW}, our twisted cohomology spectral sequence will be constructed in a very similar fashion to the Atiyah-Hirzebruch spectral sequence, in terms of filtrations by skeleta.  Let us fix for the entirety of this subsection a finite CW complex $X$, with a \emph{cohomological twist} $[H]\in H^{2k+1}(X;\ZZ)$ that will be represented by a singular $(2k+1)$-form $H\in\Omega^{2k+1}_{sing}(X)$.

As pointed out by Atiyah and Segal, since $H$ is an odd-degree singular form, the twisted differential
\[
D:=d-H\wedge
\]
squares to zero on the singular de Rham complex $\Omega^{*}_{sing}(X)$ of $X$, and preserves the $\ZZ_{2}$-grading thereon.  We can therefore consider the $\ZZ_{2}$-graded \emph{twisted} singular de Rham cohomology $H^{*}_{H}(X)$ of the complex $(\Omega^{*}_{sing}(X),D)$.  Similar remarks hold for the relative groups $(X,A)$ for any CW subcomplex $A$ of $X$, and we may consider the twisted relative groups $H^{*}_{H}(X,A)$.  The following theorem now follows from the zig zag lemma.

\begin{theorem}\label{sixtermcohom}
	Let $(X,A,H)$ be a CW pair with cohomological twist represented by a singular $(2k+1)$-form $H$.  Associated to the sequence $(A,\emptyset,H)\rightarrow(X,\emptyset,H)\rightarrow(X,A,H)$ of pairs with twist, we have the short exact sequnece
	\[
	0\rightarrow\Omega^{*}_{sing}(X,A)\xrightarrow{\iota}\Omega^{*}_{sing}(X)\xrightarrow{\pi}\Omega^{*}_{sing}(A)\rightarrow0
	\]
	of (twisted) differential graded algebras, giving rise to a six term exact sequence
	\begin{center}
		\begin{tikzcd}
		H^{\mathbf{0}}_{H}(X,A)\ar[r,"\iota_{*}"] & H^{\mathbf{0}}_{H}(X)\ar[r,"\pi_{*}"] & H^{\mathbf{0}}_{H}(X,A) \ar[d,"D_{\mathbf{0}}"] \\ H^{\mathbf{1}}_{H}(X,A) \ar[u,"D_{\mathbf{1}}"] & H^{\mathbf{1}}_{H}(X) \ar[l,"\pi_{*}"] & H^{\mathbf{1}}_{H}(X,A) \ar[l,"\iota_{*}"]
		\end{tikzcd}
	\end{center}
	of twisted cohomology groups.\qed
\end{theorem}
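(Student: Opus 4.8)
The plan is to deduce the six term sequence from the zig zag lemma applied to the displayed short exact sequence of twisted singular de Rham complexes, once we have checked that all the relevant maps are compatible with the twisted differential $D = d - H\wedge$ and that everything is $\ZZ_{2}$-graded.

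First I would record exactness of
\[
0\rightarrow\Omega^{*}_{sing}(X,A)\xrightarrow{\iota}\Omega^{*}_{sing}(X)\xrightarrow{\pi}\Omega^{*}_{sing}(A)\rightarrow0
\]
as a sequence of graded real vector spaces. Injectivity of $\iota$ is immediate, exactness in the middle is the very definition of $\Omega^{*}_{sing}(X,A) = \ker\pi$ as those singular forms vanishing on every singular simplex of $A$, and surjectivity of $\pi$ is precisely Lemma \ref{technical} applied to the injective smooth map $A\hookrightarrow X$. This is the sequence already displayed just before the statement, now viewed together with its $\ZZ_{2}$-grading.

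Next I would check that this is a short exact sequence of \emph{twisted} complexes, i.e.\ that $\iota$ and $\pi$ intertwine the twisted differentials and that $D^{2}=0$ on each term. Since $H$ is closed (being a representative of a cohomology class) and of odd degree, one has $H\wedge H = 0$ and $d$ anticommutes with $H\wedge$, so $D^{2} = -(dH)\wedge(-) = 0$ on every term; moreover $D$ shifts the $\ZZ_{2}$-grading, so each of $(\Omega^{*}_{sing}(X),D)$ and $(\Omega^{*}_{sing}(A),D)$ is a $\ZZ_{2}$-graded complex whose cohomology is $H^{\mathbf{0}}_{H}$, $H^{\mathbf{1}}_{H}$. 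The restriction $\pi$ is a morphism of twisted complexes because the twist on $A$ is by definition $\pi(H) = H|_{A}$, whence $\pi(D\omega) = d(\pi\omega) - \pi(H)\wedge\pi(\omega) = D(\pi\omega)$. Finally $\Omega^{*}_{sing}(X,A)$ is $D$-stable: if $\omega(\sigma) = 0$ for every singular simplex $\sigma$ of $A$, then $(d\omega)(\sigma) = d(\omega(\sigma)) = 0$ and $(H\wedge\omega)(\sigma) = H(\sigma)\wedge\omega(\sigma) = 0$, so $D\omega\in\Omega^{*}_{sing}(X,A)$; thus $\iota$ is a morphism of twisted complexes as well.

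With these verifications in hand, the theorem follows from the zig zag lemma for short exact sequences of $\ZZ_{2}$-graded cochain complexes, which produces the cyclic six term exact sequence in cohomology with connecting homomorphisms $D_{\mathbf{0}}\colon H^{\mathbf{0}}_{H}(A)\rightarrow H^{\mathbf{1}}_{H}(X,A)$ and $D_{\mathbf{1}}\colon H^{\mathbf{1}}_{H}(A)\rightarrow H^{\mathbf{0}}_{H}(X,A)$ defined in the usual way (lift a $D$-closed representative along $\pi$, apply $D$, and descend along $\iota$). I do not expect a serious obstacle here: the only genuinely non-formal input is surjectivity of the restriction map, which is Lemma \ref{technical} and has already been established, so the remainder is a routine application of homological algebra; the one point deserving care is the $D$-stability of the sub-differential-graded-algebra $\Omega^{*}_{sing}(X,A)$ together with the identification of the twist on $A$ with the restriction of $H$.
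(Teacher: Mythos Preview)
Your proposal is correct and follows exactly the route the paper indicates: the paper's proof is simply the sentence ``The following theorem now follows from the zig zag lemma'' together with the \qed, relying on the short exact sequence already displayed (with surjectivity of $\pi$ supplied by Lemma~\ref{technical}) and the observation that $D=d-H\wedge$ squares to zero and preserves the $\ZZ_{2}$-grading. Your write-up just makes explicit the checks (compatibility of $\iota$ and $\pi$ with $D$, $D$-stability of $\Omega^{*}_{sing}(X,A)$) that the paper leaves implicit.
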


Note that the connecting homomorphism in the six term sequence of Theorem \ref{sixtermcohom} is induced by the twisted differential $D$ after identifying $\Omega^{*}_{sing}(A)$ with $\Omega^{*}_{sing}(X)/\Omega^{*}_{sing}(X,A)$ as in the previous subsection.

Consider now the filtration of $X$ by its skeleta $X^{p}$.  Exactly as for twisted $K$-theory, we can consider the corresponding filtration
\[
\FF^{p}H^{*}_{H}(X):=\ker\big(H^{*}_{H}(X)\rightarrow H^{*}_{H}(X^{p-1})\big) = \im\big(H^{*}_{H}(X,X^{p-1})\rightarrow H^{*}_{H}(X)\big)
\]
of twisted cohomology by the skeleta of $X$.  Now substituting $K^{*}$ for $H^{*}$, and $\ext_{*}$, $\rest_{*}$ and $\partial_{*}$ for $\iota_{*}$, $\pi_{*}$ and $D_{*}$ in Theorem \ref{AH}, we have the following theorem giving a spectral sequence which computes $H^{*}_{H}(X)$.

\begin{theorem}\label{specseqcohom}
	Let $(X,H)$ be a finite CW complex with twist represented by $H\in\Omega^{2k+1}_{sing}(X)$.  Then there is a spectral sequence $\{E^{p}_{r}(H)\}$ with
	\[
	E_{2}^{p}(H)\cong H^{p}(X,\ZZ),\hspace{7mm}E_{\infty}^{p}(H) =\bigoplus_{*\in\ZZ_{2}} \FF^{p}H^{p\bmod2+*}_{H}(X)/\FF^{p+1}H^{p\bmod2+*}_{H}(X),
	\]
	which computes $H^{*}_{H}(X)$.
\end{theorem}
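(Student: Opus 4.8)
The plan is to transcribe the proof of Theorem~\ref{AH} almost verbatim, with Theorem~\ref{sixtermcohom} playing the role of Theorem~\ref{sixterm}. For each $p>0$, apply Theorem~\ref{sixtermcohom} to the sequence $(X^{p-1},\emptyset,H)\to(X^{p},\emptyset,H)\to(X^{p},X^{p-1},H)$ of CW pairs with cohomological twist (the twist on each skeleton being the restriction of the closed singular form $H$), obtaining a six term exact sequence in twisted cohomology. Set
\[
A_{1}^{p,*}(H):=H^{*}_{H}(X^{p}),\qquad E_{1}^{p,*}(H):=H^{*}_{H}(X^{p},X^{p-1}),
\]
with $A_{1}^{p,*}(H)=E_{1}^{p,*}(H)=0$ for $p<0$, and form the bigraded groups $A_{1}(H)$ and $E_{1}(H)$ over $p\in\ZZ$ and $*\in\ZZ_{2}$. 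From the maps $\pi_{*}$, $\iota_{*}$ and the connecting map $D_{*}$ of the six term sequence we obtain homomorphisms $\alpha_{1}(H):A_{1}(H)\to A_{1}(H)$ of bi-degree $(-1,0)$, $\beta_{1}(H):A_{1}(H)\to E_{1}(H)$ of bi-degree $(0,0)$, and $\gamma_{1}(H):E_{1}(H)\to A_{1}(H)$ of bi-degree $(1,1)$; exactness of the six term sequence of Theorem~\ref{sixtermcohom} guarantees that $(A_{1}(H),E_{1}(H),\alpha_{1}(H),\beta_{1}(H),\gamma_{1}(H))$ is an exact couple.

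Passing to the derived couples of this exact couple produces the spectral sequence $\{E_{r}^{p,*}(H)\}$, with differentials $d_{r}:E_{r}^{p,*}(H)\to E_{r}^{p+r,*+1}(H)$ of bi-degree $(r,1)$. Since $X$ is a finite CW complex, $X^{p}=X$ for all $p$ sufficiently large, so the filtration is finite, the sequence stabilises, and convergence gives
\[
E_{\infty}^{p,*}(H)=\FF^{p}H^{p\bmod2+*}_{H}(X)/\FF^{p+1}H^{p\bmod2+*}_{H}(X),
\]
so that the spectral sequence computes $H^{*}_{H}(X)$, exactly as in Theorem~\ref{AH}.

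It remains to identify the $E_{2}$ term, which is the only step requiring input beyond the transcription. We have $E_{1}^{p,*}(H)=H^{*}_{H}(X^{p},X^{p-1})$, the cohomology of $\Omega^{*}_{sing}(X^{p},X^{p-1})$ with the twisted differential $D=d-H\wedge$. Since $X^{p}-X^{p-1}$ is a disjoint union of contractible open $p$-cells, the restriction of $H$ to $X^{p}-X^{p-1}$ is exact; filtering $(\Omega^{*}_{sing}(X^{p},X^{p-1}),D)$ by singular form degree gives a spectral sequence whose $E_{1}$-page is the ordinary relative singular de Rham cohomology $H^{*}_{sing-dR}(X^{p},X^{p-1})$, which by the relative diffeological de Rham theorem established above is $H^{*}(X^{p},X^{p-1};\RR)$ and hence concentrated in total degree $p$. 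Being concentrated in a single degree this spectral sequence degenerates, so $H^{*}_{H}(X^{p},X^{p-1})\cong H^{p}(X^{p},X^{p-1};\RR)$, the group of real cellular $p$-cochains, placed in $\ZZ_{2}$-degree $p\bmod2$. Naturality of the de Rham isomorphism under $\alpha_{1}$, $\beta_{1}$, $\gamma_{1}$ then identifies $d_{1}(H)$ with the cellular coboundary, exactly as for $d_{1}(K)$ in Theorem~\ref{AH}, and taking cohomology yields $E_{2}^{p}(H)\cong H^{p}(X,\ZZ)$ (with real coefficients, which is the natural target of the twisted Chern character).

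The main obstacle is this last step, and within it the reduction of the \emph{twisted} relative cohomology of $(X^{p},X^{p-1})$ to the untwisted one: unlike the $K$-theory situation, where the bundle $F^{*}_{\lambda}$ is literally trivialisable over the open cells, here the relative complex $\Omega^{*}_{sing}(X^{p},X^{p-1})$ consists of forms vanishing on simplices into $X^{p-1}$ rather than forms supported away from $X^{p-1}$, so one cannot simply conjugate the twist away cell by cell. The form-degree filtration argument above — using that $H\wedge$ raises degree by $2k+1\geq3$ and that the untwisted relative cohomology of $(X^{p},X^{p-1})$ is concentrated in a single degree — is the cleanest way around this. Once this identification is in hand, the remainder is a routine repetition of the Atiyah--Hirzebruch construction.
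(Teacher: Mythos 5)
Your proposal follows the paper's architecture exactly --- the spectral sequence is obtained from the exact couple built out of the six-term sequences of Theorem \ref{sixtermcohom} for the pairs $(X^{p},X^{p-1})$, and convergence follows from finiteness of the skeletal filtration --- but you handle the two identifications that carry the actual content differently. For the $E_{1}$ page, the paper chooses a transgression of $H$ over the open cells $X^{p}-X^{p-1}$ and cites \cite{AS1} to conjugate the twisted differential away; you instead filter the relative complex $(\Omega^{*}_{sing}(X^{p},X^{p-1}),D)$ by form degree and use that the untwisted relative cohomology is concentrated in degree $p$ to force degeneration. Your route neatly sidesteps the point you raise (the relative complex consists of forms vanishing on simplices into $X^{p-1}$, not forms supported on the open cells, so a cell-by-cell conjugation is not immediate), though both routes share a caveat neither you nor the paper addresses: $\Omega^{*}_{sing}$ is unbounded in form degree, so the form-degree filtration is exhaustive but not complete, and both the degeneration argument and the transgression $e^{B}$-conjugation implicitly pass to a completion; this is inherited from \cite{AS1} and is not specific to your argument. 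For the $E_{2}$ page the difference is more substantive: the paper explicitly declines to compute it, deferring the identification of $d_{1}(H)$ with the cellular coboundary to the Chern character comparison of Theorem \ref{chiso}, where it is deduced from the corresponding fact for $d_{1}(K)$; you prove it directly from naturality of the degree-$p$ edge identifications under $\alpha_{1},\beta_{1},\gamma_{1}$. Your version is the more self-contained --- it makes Theorem \ref{specseqcohom} logically independent of Theorem \ref{chiso} --- at the cost of having to verify that those identifications are compatible with the maps of the exact couple across successive $p$, which you assert but do not spell out. Finally, you are right to flag that the $E_{2}$ term should read $H^{p}(X;\RR)$ rather than $H^{p}(X;\ZZ)$; the integral statement is carried over from the $K$-theory case.
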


\begin{proof}
	As mentioned, the proof of Theorem \ref{AH} applies almost without change.  Let us just be careful about specifying the $E_{1}$ page.  The $E_{1}$ page of this spectral sequence comes to us from an exact couple built out of the six term sequence of Theorem \ref{sixtermcohom} with $X$ replaced by $X^{p}$ and $A$ replaced by $X^{p-1}$.  Thus
	\[
	E_{1}^{p,*}(H) = H^{*}_{H}(X^{p},X^{p-1}).
	\]
	Now the action of the twisted differential on $\Omega^{*}(X^{p},X^{p-1})$ is determined by the restriction of $H$ to $X^{p}-X^{p-1}$, which is a disjoint union of contractible open $p$-balls.  Therefore $H$ is exact on $X^{p}-X^{p-1}$, and we may choose a transgression of $H$ so as to obtain an identification of $H^{*}_{H}(X^{p},X^{p-1})$ with the relative singular de Rham cohomology $H^{*}_{sing-dR}(X^{p},X^{p-1})$ (see \cite[Section 6, Remark (ii)]{AS1}), and hence with the ordinary singular cohomology $H^{*}(X^{p},X^{p-1};\RR)$ by the integration map of Theorem \ref{iso}.  That the $E_{2}$ page is as claimed will follow from our comparison of the spectral sequences $E_{r}(K)$ and $E_{r}(H)$ via the Chern character in the next subsection (see Theorem \ref{chiso}), so we do not carry out the computation here.
\end{proof}

\subsection{Comparison of the spectral sequences}

Recall from Section 2 that even and odd twisted Chern character forms $\Ch^{\mathbf{0}}_{2k+1}$ and $\Ch^{\mathbf{1}}_{2k+1}$ have been constructed on the bundles $Y_{2k+1}$ and $L_{2k+1}$ over $S^{2k+1}$, with fibres $Fred_{\cO_{\infty}\otimes\cK}$ and $\Omega Fred_{\cO_{\infty}\otimes\cK}$ respectively.  We require the following.

\begin{lemma}\label{pullbackid}
	The pullback $\id_{Y_{2k+1}}^{*}(Ch^{\mathbf{0}}_{2k+1})$ of $Ch^{\mathbf{0}}_{2k+1}\in\Omega^{*}(Y_{2k+1})$ by the identity section $\id_{Y_{2k+1}}:S^{2k+1}\rightarrow Y_{2k+1}$ is equal to zero.  Similarly, the pullback $\id_{L_{2k+1}}^{*}(Ch^{\mathbf{1}}_{2k+1})$ of $Ch^{\mathbf{1}}_{2k+1}\in\Omega^{*}(L_{2k+1})$ by the identity section $\id_{L_{2k+1}}:S^{2k+1}\rightarrow L_{2k+1}$ is zero.
\end{lemma}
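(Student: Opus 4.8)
The plan is to establish the even statement and then obtain the odd one as a formal consequence. For the even case I will show by induction on $n$ that the pullback $\id_{Y_{2k+1}}^{*}S_{n}$ of each summand of $Ch^{\mathbf{0}}_{2k+1}=\sum_{n\ge1}S_{n}/n!$ is the zero form on $S^{2k+1}$, so that the sum vanishes as well (recall $S_{0}=0$). The starting observation is that the identity section factors through the basepoint $e$ of the fibre --- the identity operator, which lies in the index-zero component $Fred^{0}_{\cO_{\infty}\otimes\cK}$: in the trivialisation $Y_{2k+1}\setminus F_{\infty}\cong D^{2k+1}\times Fred_{\cO_{\infty}\otimes\cK}$ used to construct the $S_{n}$, the identity section is $x\mapsto(x,e)$, so its composite with the projection onto $Fred_{\cO_{\infty}\otimes\cK}$ is a constant map. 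Since the generator $s_{n}$ has positive degree $2n$, a constant map pulls it back to zero, hence $\id_{Y_{2k+1}}^{*}(\rho\, s_{n}')=0$, and the formula $S_{n}=\rho\, s_{n}'-\omega_{n}$ reduces the problem to proving $\id_{Y_{2k+1}}^{*}\omega_{n}=0$.

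I will then pull the defining relation $d\omega_{n}=d(\rho\, s_{n}')-\lambda(n,k)\,\eta_{2k+1}S_{n-k}$ back along the identity section; using the inductive hypothesis, together with $S_{0}=0$ when $n=k$ and the vanishing of the coefficient $\lambda(n,k)$ when $n<k$, this shows $\id_{Y_{2k+1}}^{*}\omega_{n}$ is a \emph{closed} $2n$-form on $S^{2k+1}$. When $n\ge k+1$ it sits in degree $2n>2k+1=\dim S^{2k+1}$ and is automatically zero. When $1\le n\le k$ it is closed in a degree in which $H^{*}(S^{2k+1};\RR)$ vanishes, hence exact, and it vanishes near $\infty$ because $\omega_{n}$ is supported away from $F_{\infty}$; as $2n<2k+1$, the compactly supported de Rham cohomology of $S^{2k+1}\setminus\{\infty\}\cong\RR^{2k+1}$ vanishes in degree $2n$, so a primitive $\beta_{n}$ may be chosen with support away from $\infty$ as well. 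Pulling $\beta_{n}$ back through the chart projection and cutting it off with a bump function produces a form $\tilde\beta_{n}$ on $Y_{2k+1}$, supported away from $F_{\infty}$, vanishing on $F_{0}$, with $\id_{Y_{2k+1}}^{*}\tilde\beta_{n}=\beta_{n}$; replacing $\omega_{n}$ by $\omega_{n}-d\tilde\beta_{n}$ leaves $d\omega_{n}$, the support condition and the normalisation $S_{n}|_{F_{0}}=s_{n}$ undisturbed while forcing $\id_{Y_{2k+1}}^{*}\omega_{n}=0$; this closes the induction and gives $\id_{Y_{2k+1}}^{*}Ch^{\mathbf{0}}_{2k+1}=0$. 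I will also note that if the statement is only wanted at the level of twisted cohomology it is immediate: $\id_{Y_{2k+1}}^{*}\eta_{2k+1}=m\, u_{2k+1}$, and the Wang sequence of $Y_{2k+1}\to S^{2k+1}$ identifies the unique possibly-nonzero differential with the isomorphism $\cup\, m u_{2k+1}\colon H^{0}(S^{2k+1};\RR)\xrightarrow{\sim}H^{2k+1}(S^{2k+1};\RR)$, whence $H^{*}_{m u_{2k+1}}(S^{2k+1})=0$.

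For the odd case I reduce to the even one rather than rerunning the argument. The identity section $\id_{L_{2k+1}}\colon S^{2k+1}\to L_{2k+1}$ sends a point to the constant loop at $e$, so evaluating that loop at any $t\in S^{1}$ returns the identity operator; hence $e\circ(\id_{L_{2k+1}}\times\id_{S^{1}})=\id_{Y_{2k+1}}\circ\mathrm{pr}$, where $\mathrm{pr}\colon S^{2k+1}\times S^{1}\to S^{2k+1}$ is the projection. Commuting pullback past fibre integration then gives $\id_{L_{2k+1}}^{*}\Omega S_{n}=\int_{S^{1}}\mathrm{pr}^{*}(\id_{Y_{2k+1}}^{*}S_{n})$, and since $\mathrm{pr}^{*}$ of a form on $S^{2k+1}$ carries no $d\theta$-component its fibre integral is zero; therefore $\id_{L_{2k+1}}^{*}Ch^{\mathbf{1}}_{2k+1}=\sum\lambda(n,k)\,\id_{L_{2k+1}}^{*}\Omega S_{n}/n!=0$. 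Note that this reduction only uses that $\id_{Y_{2k+1}}^{*}S_{n}$ is a closed form, so it persists under the weaker cohomological reading of the even case.

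The factoring-through-$e$ observation and the fibre-integration reduction are routine. The step I expect to be the main obstacle is the re-selection of the primitives $\omega_{n}$ in the range $1\le n\le k$: one must check that the correction $d\tilde\beta_{n}$ can be chosen simultaneously closed, supported away from $F_{\infty}$, trivial on $F_{0}$, and with the prescribed restriction to the identity section, so that conditions (1)--(2) on the $S_{n}$ remain valid. If one is content to read ``equal to zero'' as an identity in $H^{*}_{m u_{2k+1}}(S^{2k+1})$, this obstacle vanishes and the proof collapses to the Wang-sequence computation above.
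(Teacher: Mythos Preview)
Your argument is correct, and in fact more scrupulous than the paper's. The paper's proof is a single paragraph: it asserts that about any point of $S^{2k+1}$ one can choose a local trivialisation $Y_{2k+1}|_{U}\cong U\times Fred_{\cO_{\infty}\otimes\cK}$ in which $Ch^{\mathbf{0}}_{2k+1}$ is the pullback of \emph{a} Chern character form on the fibre via the second projection, and then observes that $\pi_{2}\circ\id_{Y_{2k+1}}|_{U}$ is constant at the identity operator, so the pullback vanishes. The odd case is declared ``similar''.

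Your route differs in two respects. First, you do not take the local fibre-pullback structure for granted; instead you run the induction on the actual building blocks $S_{n}=\rho\,s_{n}'-\omega_{n}$ from the construction, and you confront head-on the fact that the correction terms $\omega_{n}$ are only specified up to a closed relative form and so need not vanish along the identity section a priori. Your fix---adjusting $\omega_{n}$ by $d\tilde\beta_{n}$ in the low-degree range $1\le n\le k$, using $H^{2n}_{c}(\RR^{2k+1})=0$---is exactly what is needed to make the paper's implicit claim honest while preserving conditions (1) and (2). Second, for the odd case you give an explicit reduction via the identity $e\circ(\id_{L_{2k+1}}\times\id_{S^{1}})=\id_{Y_{2k+1}}\circ\mathrm{pr}$ and the fact that $\mathrm{pr}^{*}$ of a base form has no $d\theta$-component, which is cleaner than rerunning the even argument and in fact does not even require the even case to hold on the nose.

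What the paper's approach buys is brevity: once one accepts the local fibre-pullback description, the lemma is immediate. What your approach buys is rigour and transparency about the freedom in the construction of the $S_{n}$; it also isolates precisely where the form-level statement needs the choice of $\omega_{n}$ to be refined, and records the (much easier) cohomological version as a byproduct of the Wang sequence. The underlying idea---the identity section factors through a constant map to the fibre basepoint---is the same in both.
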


\begin{proof}
	We prove the result only for $Ch^{\mathbf{0}}_{2k+1}$, as the odd case is similar.  About any point in $S^{2k+1}$, we can find an open neighbourhood $U$ and a trivialisation $Y_{2k+1}|_{U}\cong U\times Fred_{\cO_{\infty}\otimes\cK}$ with respect to which $Ch^{\mathbf{0}}_{2k+1}$ is just the pullback of a Chern character form on $Fred_{\cO_{\infty}\otimes\cK}$ by the projection $\pi_{2}:U\times Fred_{\cO_{\infty}\otimes\cK}\rightarrow Fred_{\cO_{\infty}\otimes\cK}$ onto the second factor.  We then have that $\pi_{2}\circ\id_{Y_{2k+1}}|_{U}$ is the constant map $U\rightarrow\{\id_{Fred_{\cO_{\infty}\otimes\cK}}\}$, giving the result.
\end{proof}

By a \emph{finite CW pair with cohomotopy twist} we mean a triple $(X,A,H)$, where $X$ is a finite CW complex, $A$ a CW subcomplex, and $H = \tilde{H}^{*}(\overline{\alpha(\eta)})\in\Omega^{2k+1}_{sing}(X)$ a singular de Rham form associated to a cohomotopy representative $\tilde{H}:X\rightarrow S^{2k+1}$.  Here $\eta\in\Omega^{2k+1}(S^{2k+1})$ is the form used to construct $Ch^{\mathbf{0}}_{2k+1}$ and $Ch^{\mathbf{1}}_{2k+1}$ as in Section 2, and $\overline{\alpha(\eta)}$ is an extension of $\eta$ to a continuous singular de Rham form as in Theorem \ref{iso2} and Proposition \ref{topsm}.

As in Section 2, we think of $K^{\mathbf{0}}_{\lambda}(X)$ as homotopy classes of \emph{lifts} of the classifying map $\lambda:X\rightarrow S^{2k+1}$ to a map $F:X\rightarrow Y_{2k+1}$.  Similarly, $K^{\mathbf{1}}_{\lambda}(X)$ can be described as homotopy classes $G:X\rightarrow L_{2k+1}$ of $\lambda$.  The relative twisted Chern character for CW pairs with cohomotopy twist may now be defined as follows.

\begin{definition}
	Let $(X,A,H)$ be a finite CW pair with cohomotopy twist.  Let $\alpha$ denote the map from Theorem \ref{iso2} sending manifold de Rham forms to singular de Rham forms.  The \textbf{even twisted Chern character} is the map $Ch^{\mathbf{0}}_{H}:K^{\mathbf{0}}_{[H]}(X,A)\rightarrow H^{\mathbf{0}}_{H}(X,A)$ defined by
	\[
	Ch^{\mathbf{0}}_{H}([F]):=[F^{*}\overline{\alpha(Ch^{\mathbf{0}}_{2k+1})}],
	\]
	where $F:X\rightarrow Y_{2k+1}$ is a lift of $\tilde{H}:X\rightarrow S^{2k+1}$ which coincides with the identity section outside of some compact set in $X-A$, and where $\overline{\alpha(Ch^{\mathbf{0}}_{2k+1})}$ is any extension of $\alpha(Ch^{\mathbf{0}}_{2k+1})$ to a closed singular form for $Y_{2k+1}$ with its continuous diffeology as in Proposition \ref{topsm}.  Similarly, the \textbf{odd twisted Chern character} is the map $Ch^{\mathbf{1}}_{H}:K^{\mathbf{1}}_{[H]}(X,A)\rightarrow H^{\mathbf{1}}_{H}(X,A)$ defined by
	\[
	Ch^{\mathbf{1}}_{H}([G]):=[G^{*}\overline{\alpha(Ch^{\mathbf{1}}_{2k+1})}],
	\]
	where $G:X\rightarrow L_{2k+1}$ is a lift of $\tilde{H}:X\rightarrow S^{2k+1}$ which coincides with the identity section outside of some compact set in $X-A$, and where $\overline{\alpha(Ch^{\mathbf{1}}_{2k+1})}$ is any extension of $\alpha(Ch^{\mathbf{1}}_{2k+1})$ to a closed singular form for $L_{2k+1}$ with its continuous diffeology as in Proposition \ref{topsm}.
\end{definition}

Let us remark that the odd and even twisted Chern characters do indeed take values in the claimed relative cohomology groups.  Indeed, if $\sigma:\Delta^{l}\rightarrow A$ is any singular simplex and $F:X\rightarrow Y_{2k+1}$ is any lift of $\tilde{H}$ which coincides with the identity outside of some compact set in $X-A$, then $F\circ\sigma = \id_{Y_{2k+1}}\circ \tilde{H}\circ\sigma$.  Since $\id_{Y_{2k+1}}^{*}Ch^{\mathbf{0}}_{2k+1}$ is the zero form by Lemma \ref{pullbackid}, for any singular simplex $\sigma$ in $A$ we apply Proposition \ref{compat} to see that
\begin{align*}
\big(F^{*}\overline{\alpha(Ch^{\mathbf{0}}_{2k+1})}\big)(\sigma) =& \overline{\alpha(Ch^{\mathbf{0}}_{2k+1})}(\id_{Y_{2k+1}}\circ \tilde{H}\circ\sigma) = \big(\id_{Y_{2k+1}}^{*}\overline{\alpha(Ch^{\mathbf{0}}_{2k+1})}\big)(\tilde{H}\circ\sigma)\\ =& \overline{(\id_{Y_{2k+1}}^{*}\alpha(Ch^{\mathbf{0}}_{2k+1}))}(\tilde{H}\circ\sigma) = \overline{\alpha(\id_{Y_{2k+1}}^{*}Ch^{\mathbf{0}}_{2k+1})}(\tilde{H}\circ\sigma) = 0,
\end{align*}
so that $F^{*}Ch^{\mathbf{0}}_{2k+1}\in\Omega^{*}_{sing}(X,A)$.  Here the third equality follows from the fact that $\id_{Y_{2k+1}}^{*}\overline{\alpha(Ch^{\mathbf{0}}_{2k+1})}$ is an extension of $\id_{Y_{2k+1}}^{*}\alpha(Ch^{\mathbf{0}}_{2k+1})$.  Similar arguments apply for $Ch^{\mathbf{1}}_{H}$.  The Chern character can now be seen to be a natural transformation of generalised cohomology theories.

\begin{theorem}\label{chnat}
	The twisted Chern character is a natural transformation of generalised cohomology theories from the category of finite CW pairs with cohomotopy twist to the category of abelian groups.  That is, given a finite CW pair with cohomotopy twist $(X,A,H)$, for each $*=\mathbf{0},\mathbf{1}$ the diagram
	\begin{center}
		\begin{tikzcd}
		K^{*}_{[H]}(X,A) \ar[d,"Ch^{*}_{H}"] \ar[r,"\ext_{*}"] & K^{*}_{[H]}(X) \ar[d,"Ch^{*}_{H}"] \ar[r,"\rest_{*}"] & K^{*}_{[H]}(A) \ar[d,"Ch^{*}_{H}"] \ar[r,"\partial_{*}"] &K^{*+\mathbf{1}}_{[H]}(X,A) \ar[d,"Ch^{*+\mathbf{1}}_{H}"]\\ H^{*}_{H}(X,A) \ar[r,"\iota_{*}"] & H^{*}_{H}(X) \ar[r,"\pi_{*}"] & H^{*}_{H}(X,A) \ar[r,"D_{*}"] & H^{*+\mathbf{1}}_{H}(X,A)
		\end{tikzcd}
	\end{center}
	commutes.
\end{theorem}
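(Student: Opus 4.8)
The plan is to verify the three squares of the diagram separately, disposing of the two squares built from $\ext_{*}/\iota_{*}$ and $\rest_{*}/\pi_{*}$ quickly and devoting the bulk of the argument to the square relating the connecting homomorphisms.

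For the first square, given $[F]\in K^{*}_{[H]}(X,A)$ represented by a lift $F:X\to Y_{2k+1}$ (or $L_{2k+1}$) of $\tilde H$ agreeing with the identity section outside a compact subset of $X-A$, the class $\ext_{*}[F]$ is represented by the very same map $F$, now regarded as a lift over all of $X$; while $Ch^{*}_{H}([F])=[F^{*}\overline{\alpha(Ch^{*}_{2k+1})}]$ was already shown, in the discussion preceding the theorem (using Lemma \ref{pullbackid} and Proposition \ref{compat}), to lie in the relative subcomplex $\Omega^{*}_{sing}(X,A)$, so that $\iota_{*}$ merely re-regards it. This square therefore commutes on the nose. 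For the second square, for $[F]\in K^{*}_{[H]}(X)$ one has $\rest_{*}[F]=[F|_{A}]$, and functoriality of the pullback of singular de Rham forms (Proposition \ref{compat}) gives $(F|_{A})^{*}\overline{\alpha(Ch^{*}_{2k+1})}=F^{*}\overline{\alpha(Ch^{*}_{2k+1})}|_{A}=\pi_{*}Ch^{*}_{H}([F])$, so commutativity is again immediate.

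The substance of the theorem is the remaining square, $Ch^{*+\mathbf{1}}_{H}\circ\partial_{*}=D_{*}\circ Ch^{*}_{H}$, comparing the topologically defined connecting homomorphism $\partial_{*}=m^{*}\circ\theta^{-1}$ of \eqref{connecting} with the algebraic connecting homomorphism $D_{*}$ produced by the zig zag lemma used in Theorem \ref{sixtermcohom}. The plan is to realise $D_{*}$ through the same open-cone construction that defines $\partial_{*}$: since $c(X,A)$ admits a neighbourhood retract onto $X$, the identification $K^{*+1}_{\lambda}(X,A)=K^{*+1}_{c\lambda}(c(X,A))$ has an analogous de Rham counterpart $H^{*+1}_{H}(X,A)\cong H^{*+1}_{cH}(c(X,A))$, and with respect to it $D_{*}$ factors as $m^{*}\circ(\theta^{dR})^{-1}$, where $\theta^{dR}$ is the de Rham analogue of $\theta$, namely fibre-integration over $S^{1}$ (the de Rham image of the ``un-suspension'' $\sigma\mapsto\tilde\sigma$) followed by the $\ZZ_{2}$-grading isomorphism that plays the role of Bott periodicity. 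It then suffices to check $Ch$-compatibility factor by factor: $Ch$ commutes with $m^{*}$ because $m$ is a morphism of pairs with twist and pullback of singular de Rham forms is natural (Proposition \ref{compat}), and $Ch$ is compatible with Bott periodicity, which is classical (equivalently, is built into the $\ZZ_{2}$-periodicity of the Chern character form).

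The key step, which I expect to be the main obstacle, is showing that $Ch$ intertwines the ``un-suspension'' map $\sigma\mapsto\tilde\sigma$ on sections over $A\times(0,1)$ with fibre-integration $\int_{S^{1}}$ on singular de Rham forms. But this is exactly the relation engineered in Section \ref{odd_ch} when $Ch^{\mathbf{1}}_{2k+1}$ was built from $Ch^{\mathbf{0}}_{2k+1}$: the evaluation map $e:L_{2k+1}\times S^{1}\to Y_{2k+1}$ realises, on universal bundles, precisely the passage from a loop-valued section to its pointwise evaluation, and $Ch^{\mathbf{1}}_{2k+1}=\sum_{n}\lambda(n,k)\,\Omega S_{n}/n!$ with $\Omega S_{n}=\int_{S^{1}}e^{*}S_{n}$ is obtained from $Ch^{\mathbf{0}}_{2k+1}=\sum_{n}S_{n}/n!$ by applying $\int_{S^{1}}\circ\,e^{*}$. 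Pulling this universal identity back along the classifying map of the twist, using naturality (Proposition \ref{compat}) together with the fact that $e$ is fibre-preserving so that $e^{*}\eta_{2k+1}$ has no $d\theta$-component (as in Section \ref{odd_ch}), gives the required intertwining over a general pair. Chaining this with the Bott-periodicity compatibility, the $m^{*}$-square, and the cone identification yields $Ch^{*+\mathbf{1}}_{H}\circ\partial_{*}=D_{*}\circ Ch^{*}_{H}$. What remains is purely bookkeeping: one must track the ``equal to the identity outside a compact set'' conditions through the open cone, the suspension, and the extension maps, and keep the degree shifts and signs consistent; no conceptual difficulty survives once the universal relation between $Ch^{\mathbf{0}}_{2k+1}$ and $Ch^{\mathbf{1}}_{2k+1}$ is exploited.
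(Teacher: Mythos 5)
Your proposal is correct and follows essentially the same route as the paper: the first two squares by naturality of pullback of singular de Rham forms, and the connecting-homomorphism square via the open cone/suspension together with the fact that $Ch^{\mathbf{1}}_{2k+1}$ transgresses $Ch^{\mathbf{0}}_{2k+1}$ through the evaluation map $e$ and fibre integration over $S^{1}$ (the paper's treatment of this last square is only a one-sentence sketch, which your factorisation of $D_{*}$ as $m^{*}\circ(\theta^{dR})^{-1}$ fleshes out). The one point to watch is that $Ch^{\mathbf{1}}_{2k+1}=\sum_{n}\lambda(n,k)\Omega S_{n}/n!$ is not literally $\int_{S^{1}}e^{*}Ch^{\mathbf{0}}_{2k+1}$ because of the coefficients $\lambda(n,k)$, so the ``transgression'' identity you invoke holds only up to these normalisations, which must be absorbed into your final bookkeeping.
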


\begin{proof}
	To show commutativity of the first two squares, it suffices to show that if $f:(X,A,H_{X})\rightarrow (Y,B,H_{Y})$ is a morphism of finite CW pairs with cohomotopy twist, so that $\tilde{H}_{Y}\circ f = \tilde{H}_{X}$, then the square
	\begin{center}
		\begin{tikzcd}
		K^{*}_{[H_{Y}]}(Y,B) \ar[r,"f^{*}"] \ar[d,"Ch_{H_{Y}}^{*}"] & K^{*}_{[H_{X}]}(X,A) \ar[d,"Ch_{H_{X}}^{*}"] \\ H^{*}_{H_{Y}}(Y,B) \ar[r,"f^{*}"] & H^{*}_{H_{X}}(X,A)
		\end{tikzcd}
	\end{center}
	commutes.  Suppose that $*=\mathbf{0}$.  The $*=\mathbf{1}$ case follows by essentially the same argument.  Then given a lift $F:Y\rightarrow Y_{2k+1}$ of $\tilde{H}_{Y}$ which coincides with the identity section outside of some compact subset of $Y-B$, we compute
	\begin{align*}
	Ch^{\mathbf{0}}(f^{*}[F]) =& Ch^{\mathbf{0}}([F\circ f]) = [(F\circ f)^{*}\overline{\alpha(Ch^{\mathbf{0}}_{2k+1})}] = [f^{*}(F^{*}\overline{\alpha(Ch^{\mathbf{0}}_{2k+1})})]\\ =& f^{*}[F^{*}\overline{\alpha(Ch^{\mathbf{0}}_{2k+1})}] = f^{*}Ch^{\mathbf{0}}([F])
	\end{align*}
	giving the claimed commutativity.  Commutativity of the final square follows from a retraction of the open suspension $c(X,A)-X$ of $A$ onto $A$ (see our topological definition of the connecting homomorphism $\partial$ given in Equation \eqref{connecting}), together with the fact that the odd Chern form on the identity-based loop space of the Fredholm operators transgresses the even Chern form on the Fredholm operators.
\end{proof}

Taking a finite CW complex with cohomotopy twist $(X,H)$ now, and replacing $X$ and $A$ in the statement of Theorem \ref{chnat} with skeleta $X^{p}$ and $X^{p-1}$ respectively, we have that the Chern character induces a morphism of spectral sequences from the $\{E_{r}(K)\}$ of Theorem \ref{AH} to the $\{E_{r}(H)\}$ of Theorem \ref{specseqcohom}.  In particular, on the first page, a choice of trivialisation of the twist over $X^{p}-X^{p-1}$ (a disjoint union of contractible spaces) identifies the twisted Chern character with the ordinary Chern character, so that the square
\begin{center}
	\begin{tikzcd}
	K^{*}_{[H]}(X^{p},X^{p-1}) \ar[r,"Ch_{H}^{*}"] \ar[d,"\cong"] & H^{*}_{H}(X^{p},X^{p-1}) \ar[d,"\cong"] \\ K^{*}(X^{p},X^{p-1}) \ar[r,"Ch^{*}"] & H^{*}(X^{p},X^{p-1};\RR)
	\end{tikzcd}
\end{center}
commutes.  The expected theorem then follows.

\begin{theorem}\label{chiso}
	Given any finite CW complex with cohomotopy twist $(X,H)$, the twisted Chern character induces an isomorphism of the real-ised Atiyah-Hirzebruch spectral sequence $\{E_{r}(K)\otimes\RR\}$ onto the spectral sequence $\{E_{r}(H)\}$ of Theorem \ref{specseqcohom} computing the higher twisted cohomology.  Consequently the Chern character defines a real isomorphism $Ch^{*}_{H}:K^{*}_{[H]}(X)\otimes\RR\rightarrow H^{*}_{H}(X)$.\qed
\end{theorem}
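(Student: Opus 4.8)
The strategy is the classical one of Atiyah--Segal: the preceding discussion has already arranged that the twisted Chern character defines a morphism $Ch^{*}_{H}\colon\{E_r(K)\}\to\{E_r(H)\}$ of spectral sequences which, on the $E_1$ page and after a cell-wise choice of trivialisation of the twist over $X^{p}-X^{p-1}$, is identified with the ordinary relative Chern character $K^{*}(X^{p},X^{p-1})\to H^{*}(X^{p},X^{p-1};\RR)$. It remains to promote this to a real isomorphism of the entire spectral sequences and then of higher twisted $K$-theory. First I would realise: since $\RR$ is flat over $\ZZ$, tensoring with $\RR$ is exact, so $\{E_r(K)\otimes\RR\}$ is again a spectral sequence converging to $K^{*}_{[H]}(X)\otimes\RR$, and $Ch^{*}_{H}\otimes\RR$ is a morphism from it to $\{E_r(H)\}$. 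On $E_1$ this morphism is, cell by cell, the ordinary Chern character tensored with $\RR$, which is the classical isomorphism $K^{*}(X^{p},X^{p-1})\otimes\RR\xrightarrow{\ \cong\ }H^{*}(X^{p},X^{p-1};\RR)$ for finite CW pairs \cite{AH}. Hence $Ch^{*}_{H}\otimes\RR$ is an isomorphism on $E_1$.

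Next I would invoke the mapping lemma for spectral sequences: a morphism of spectral sequences commuting with the differentials which is an isomorphism on page $E_r$ is automatically an isomorphism on $E_{r+1}$ (it carries $\ker d_r$ and $\operatorname{im}d_r$ isomorphically), hence on every later page. This yields isomorphisms $E_r(K)\otimes\RR\cong E_r(H)$ for all $r\geq1$. Taking $r=2$ simultaneously settles the point left open in the proof of Theorem \ref{specseqcohom}: since the $d_1$ differentials on both sides are the cellular coboundary, $E_2(H)\cong H^{*}(X;\RR)$ matches $E_2(K)\otimes\RR$. Because $X$ is a \emph{finite} CW complex the skeletal filtration has only finitely many steps, both spectral sequences are bounded and stabilise at a finite page, and so the isomorphism persists on $E_\infty$, giving $E_\infty(K)\otimes\RR\cong E_\infty(H)$.

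It remains to pass from $E_\infty$ to the filtered groups. Both $K^{*}_{[H]}(X)\otimes\RR$ and $H^{*}_{H}(X)$ are finite-dimensional real vector spaces carrying the bounded filtration $\FF^{\bullet}$ of Theorems \ref{AH} and \ref{specseqcohom}, with associated graded pieces $E_\infty^{\bullet}(K)\otimes\RR$ and $E_\infty^{\bullet}(H)$ respectively, and $Ch^{*}_{H}\otimes\RR$ preserves the filtration. A downward induction on the filtration degree, applying the five lemma on each side to the short exact sequence $0\to\FF^{p+1}\to\FF^{p}\to E_\infty^{p}\to0$ together with the isomorphism already known on the $E_\infty^{p}$ quotients, shows that $Ch^{*}_{H}\otimes\RR$ restricts to an isomorphism on each $\FF^{p}$; taking $p=0$ yields the desired real isomorphism $Ch^{*}_{H}\colon K^{*}_{[H]}(X)\otimes\RR\xrightarrow{\ \cong\ }H^{*}_{H}(X)$.

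The genuine obstacle lies upstream of this argument, in the material immediately preceding the theorem: establishing that $Ch^{*}_{H}$ really does commute with the connecting homomorphisms (the third square of Theorem \ref{chnat}), which hinges on the odd Chern form on $\Omega Fred_{\cO_{\infty}\otimes\cK}$ transgressing the even Chern form on $Fred_{\cO_{\infty}\otimes\cK}$, and that $Ch^{\mathbf{0}}_{2k+1}$ restricts on a fibre to the classical Chern character (condition (1) of \S\ref{chern_character_odd_sphere}). Granting those inputs, the content of Theorem \ref{chiso} is pure homological algebra --- the mapping lemma and the five lemma --- made available by the finiteness of $X$, which guarantees convergence and bounded filtrations on both sides.
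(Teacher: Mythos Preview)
Your proposal is correct and follows exactly the same approach as the paper. In fact the paper gives no proof at all beyond the preceding paragraph and a \qed: it sets up the morphism of spectral sequences via Theorem \ref{chnat}, identifies it on $E_1$ with the ordinary Chern character over the trivialisable pieces $X^{p}-X^{p-1}$, and then simply writes ``The expected theorem then follows''; you have spelled out precisely the standard homological algebra (flatness of $\RR$, the mapping lemma, bounded convergence from finiteness of $X$, and the five-lemma induction from $E_\infty$ to the filtered groups) that the paper leaves implicit.
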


\begin{remark}\normalfont
	Note that the spectral sequence we have used in Theorem \ref{specseqcohom} to compute higher twisted cohomology \emph{distinct} from that used by Atiyah and Segal in \cite{AS1}.  Specifically, while our spectral sequence has been constructed from the filtration of the underlying space by its skeleta, that of Atiyah and Segal used a filtration by degree of differential form (we refer the reader to \cite{LLW} for a detailed construction of the spectral sequence filtered by form degree in the higher twisted case).  Our reason for choosing a different spectral sequence is that it is unclear even in the case of the 3-twist considered by Atiyah and Segal why the twisted Chern character induces the claimed morphism of spectral sequences when one filters the twisted de Rham complex by degree instead of by skeleta.  Atiyah and Segal claim that this follows ``Because the twisted Chern character is functorial" \cite[p. 24]{AS1} - however, as is clear from our presentation, naturality of the Chern character is with respect to CW pairs and not, a-priori, with respect to degree of differential forms.
	
	In particular, we have not been able to prove that the higher differentials of the real-ised Atiyah-Hirzebruch spectral sequence are Massey products with the twisting form.  This computation appears to require the filtration by degree of differential forms (see \cite[Theorem 1.2]{LLW} for a detailed exposition).  Thus to obtain the higher differentials, one must find a way of relating the filtration of higher twisted cohomology by skeleta with the filtration by degree of differential forms.  We leave this as an open problem.
\end{remark}

\end{document}